\theoremstyle{plain}
\newtheorem{theorem}{Theorem}[section]
\newtheorem{lemma}[theorem]{Lemma}
\newtheorem{corollary}[theorem]{Corollary}
\newtheorem{proposition}[theorem]{Proposition}
\theoremstyle{definition}
\newtheorem{definition}[theorem]{Definition}
\newtheorem{remark}[theorem]{Remark}
\newtheorem*{remark*}{Remark}
\numberwithin{equation}{section}
\renewcommand{\d}{\textnormal{d}}
\newcommand{\dx} {\,\mathrm{d}x}
\newcommand{\dy} {\,\mathrm{d}y}
\newcommand{\ds} {\,\mathrm{d}s}
\newcommand{\R}{\mathbb{R}}
\newcommand{\Rd}{\mathbb{R}^d}
\DeclareMathOperator{\supp}{supp}
\newcommand{\sign}{\text{sign}}
\title[Regularity for nonlocal problems with non-standard growth]{Regularity for nonlocal problems with non-standard growth}
\author{Jamil Chaker}
\address{Fakult\"at f\"ur Mathematik, Universit\"at Bielefeld, 33615 Bielefeld, Germany}
\email{jchaker@math.uni-bielefeld.de}
\author{Minhyun Kim}
\address{Fakult\"at f\"ur Mathematik, Universit\"at Bielefeld, 33615 Bielefeld, Germany}
\email{minhyun.kim@uni-bielefeld.de}
\author{Marvin Weidner}
\address{Fakult\"at f\"ur Mathematik, Universit\"at Bielefeld, 33615 Bielefeld, Germany}
\email{mweidner@math.uni-bielefeld.de}
\subjclass[2020]{35B65, 47G20, 35D30, 35B45, 35A15}
\keywords{H\"older estimate, local boundedness, nonlocal problem, non-standard growth, minimizer, weak solution, De Giorgi class}
\thanks{Jamil Chaker gratefully acknowledges funding by the Deutsche Forschungsgemeinschaft (SFB 1283/2 2021 - 317210226). Minhyun Kim and Marvin Weidner gratefully acknowledge funding by the Deutsche Forschungsgemeinschaft (GRK 2235/2 2021 - 282638148).}
\begin{document}

\begin{abstract}
We study robust regularity estimates for local minimizers of nonlocal functionals with non-standard growth of $(p,q)$-type and for weak solutions to a related class of nonlocal equations. The main results of this paper are local boundedness and H\"older continuity of minimizers and weak solutions. 
Our approach is based on the study of corresponding De Giorgi classes.
\end{abstract}

\maketitle

%%%%%%%%%%%%%%%%%%%%%%%%%%%%%%%%%%%%%%
\section{Introduction} \label{sec:introduction}
%%%%%%%%%%%%%%%%%%%%%%%%%%%%%%%%%%%%%%

The aim of this paper is to prove regularity properties of local minimizers and weak solutions to a class of nonlocal problems with non-standard growth.

Let $s \in (0,1)$, $\Lambda \geq 1$ and $\Omega \subset \Rd$ be an open set. We study energy functionals of the form
\begin{equation} \label{eq:nonlocalfunctional}
u \mapsto \mathcal{I}_f(u) = (1-s)\iint_{(\Omega^c \times \Omega^c)^c} f\left( \frac{\vert u(x) - u(y) \vert}{\vert x-y \vert^s} \right) \frac{k(x,y)}{|x-y|^{d}} \dy \dx,
\end{equation}
where $f : [0,\infty) \to [0,\infty)$ is a convex increasing function and $k: \Rd \times \Rd \to \R$ is a measurable function satisfying
\begin{equation} \label{eq:k} \tag{$k$}
k(x,y) = k(y,x) \quad\text{and}\quad \Lambda^{-1} \leq k(x,y) \leq \Lambda \quad\text{for a.e. } x, y \in \Rd.
\end{equation}

When $f(t) = t^p$ with $p>1$ and $\Lambda =1$, the functional \eqref{eq:nonlocalfunctional} becomes the standard fractional $p$-functional whose corresponding operator is the fractional $p$-Laplacian. The regularity theory for this case is well established, see \cite{DiKuPa14,DiKuPa16,Coz17}.

Apparently the functional \eqref{eq:nonlocalfunctional} is governed by the function $f$, which controls the growth behavior for large and small values of $|u(x)-u(y)||x-y|^{-s}$. To establish the regularity theory, we need some growth conditions on $f$.

Let $1 \leq p \leq q$. We say that $f$ satisfies \eqref{eq:pq} if $f$ is differentiable and satisfies for all $t\geq 0$
\begin{subequations}
\makeatletter
\def\@currentlabel{$f_{p}^{q}$}
\makeatother
\label{eq:pq}
\begin{align}
p f(t) \leq ~&t f'(t), \label{eq:pq-lower} \tag{$f_{p}$} \\
& t f'(t) \le qf(t). \label{eq:pq-upper} \tag{$f^{q}$}
\end{align}
\end{subequations}
Condition \eqref{eq:pq} can be interpreted as a {\it$(p,q)$-growth condition} since it implies that
\setcounter{equation}{1}
\begin{equation}
\label{eq:pq-growth}
f(1)(t^p-1) \leq f(t) \leq f(1)(t^q+1).
\end{equation}
For a detailed discussion of \eqref{eq:pq} we refer to \Cref{sec:preliminaries}.

To study local boundedness of minimizers, we work under the assumption that there exists a constant $c_0 > 0$ such that for all $t \geq 0$
\begin{equation} \label{eq:non-degeneracy} \tag{$f\gtrsim t^p$}
c_0 t^p \leq f(t).
\end{equation}
Throughout the paper, we will assume without loss of generality that $f(0)=0$ and $f(1)=1$. This is possible since a function $u$ minimizes $\mathcal{I}_{f}$ if and only if $u$ minimizes $\mathcal{I}_{(f - f(0))/(f(1)-f(0))}$. 

In the following, we present the first main result of this paper. It is concerned with H\"older estimates and local boundedness for local minimizers of \eqref{eq:nonlocalfunctional}.

\begin{theorem}[Local minimizers]
\label{thm:minimizer}
Let $s_0 \in (0,1)$, $1 < p \le q$, $\Lambda \geq 1$, $c_0 > 0$ and assume $s \in [s_0, 1)$. Let $f: [0,\infty) \to [0,\infty)$ be a convex increasing function satisfying \eqref{eq:pq-upper} and let $k: \Rd \times \Rd \to \R$ be a measurable function satisfying \eqref{eq:k}. Let $u \in V^{s,f}(\Omega | \Rd)$ be a local minimizer of \eqref{eq:nonlocalfunctional}.
\begin{enumerate} [(i)]
\item
Assume that $f$ satisfies \eqref{eq:pq-lower}. Then, there exist $\alpha \in (0,1)$ and $C > 0$, depending on $d$, $s_0$, $p$, $q$ and $\Lambda$, such that for any $B_{8R}(x_0) \subset \Omega$
\begin{equation} \label{eq:Holder}
R^\alpha [u]_{C^{\alpha}(\overline{B_R(x_0)})} \leq C \|u\|_{L^{\infty}(B_{4R}(x_0))} + \mathrm{Tail}_{f'}(u; x_0, 4R).
\end{equation}
\item
Assume that $sp<d$, $q < p^{\ast}:=dp/(d-sp)$ and that $f$ satisfies \eqref{eq:non-degeneracy}. Then, $u \in L^{\infty}_{\mathrm{loc}}(\Omega)$. Moreover, for each $B_{2R}(x_0) \subset \Omega$ there exists $C > 0$, depending on $d$, $s_0$, $p$, $q$, $p^{\ast}-q$, $\Lambda$, $c_0$ and $R$, such that for every $\delta \in (0,1)$
\begin{equation} \label{eq:local-boundedness}
\sup_{B_{R}(x_0)} |u| \leq \delta \mathrm{Tail}_{f'}(u; x_0, R) + C \delta^{-(q-1)\frac{p^{\ast}}{p} \frac{1}{p^{\ast}-q}} \left( \fint_{B_R} |u(x)|^q \dx \right)^{\frac{1}{p}\frac{p^{\ast}-p}{p^{\ast}-q}} + \delta^{\frac{q-1}{q}}.
\end{equation}
\end{enumerate}
\end{theorem}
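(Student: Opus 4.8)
The plan is to reduce both statements to structural properties of the minimizer expressed through \emph{nonlocal De Giorgi classes}, and then run De Giorgi-type iterations adapted to $(p,q)$-growth. Concretely, I would first show that any local minimizer $u$ of $\mathcal{I}_f$ satisfies, for every pair of concentric balls $B_r \subset B_R \subset \Omega$ and every level $k \in \R$, a Caccioppoli inequality on the super-level set of the form
\[
(1-s)\iint_{B_r \times B_r} \frac{|w_+(x) - w_+(y)|^p}{|x-y|^{d+sp}} \dy\dx
\;\lesssim\;
\frac{1}{(R-r)^q}\int_{B_R} w_+(x)^q \dx
+ (\text{long-range tail contribution}),
\]
where $w_\pm = (u-k)_\pm$, with the reverse (sub-level) inequality holding as well once \eqref{eq:pq-lower} is assumed in addition. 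To derive it I would test minimality with a competitor of the form $v = u \mp \varphi\, w_\pm$ for a cutoff $\varphi$ between $B_r$ and $B_R$, split the domain $(\Omega^c\times\Omega^c)^c$ into the near-diagonal part and the long-range part, and on the near-diagonal part exploit convexity of $f$ together with elementary algebraic inequalities of the same flavour as those used for the fractional $p$-Laplacian, using \eqref{eq:pq} to pass from $f$-quantities to $p$- and $q$-powers (the factor $(R-r)^{-q}$ appearing precisely through \eqref{eq:pq-upper}/\eqref{eq:pq-growth} applied to the cutoff increments). The non-degeneracy \eqref{eq:non-degeneracy} is what produces the genuine $p$-power on the left-hand side needed for part (ii), while the long-range terms are absorbed into $\mathrm{Tail}_{f'}$.

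For part (ii), given this Caccioppoli inequality I would run the classical level-set iteration: fix $B_{2R}(x_0)\subset\Omega$, choose radii $r_j = R(1+2^{-j})$ and levels $k_j = (1-2^{-j})\kappa$, set $A_j = \fint_{B_{r_j}}(u-k_j)_+^q\dx$, and combine the Caccioppoli inequality with the fractional Sobolev inequality on $B_{r_{j+1}}$ to obtain a recursion $A_{j+1}\le C\,b^{\,j}\,\kappa^{-\gamma} A_j^{1+\beta}$ for suitable $b>1$, $\gamma\ge 0$, $\beta>0$. Here $\beta$ is governed by the gap between the Sobolev exponent $p^\ast$ and the growth exponent $q$, so the hypothesis $q < p^\ast$ is exactly what guarantees $\beta>0$ and lets the fast-geometric-convergence lemma apply once $\kappa$ is chosen large enough (comparably to $\delta\,\mathrm{Tail}_{f'}(u;x_0,R)$ plus a power of the $L^q$-average, with the same construction carried out for $-u$). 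Splitting the choice of $\kappa$ into its "tail part" and its "interior part" and carefully tracking the constants through the recursion and the rescaling in $R$ yields \eqref{eq:local-boundedness}; the exponents $(q-1)\tfrac{p^\ast}{p}\tfrac{1}{p^\ast-q}$ and $\tfrac1p\tfrac{p^\ast-p}{p^\ast-q}$ simply drop out of that bookkeeping. Robustness as $s\to 1^-$ is ensured by the normalizing factor $(1-s)$ together with the use of $s_0$ in the Sobolev and tail constants.

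For part (i), with the two-sided De Giorgi class in hand I would prove an oscillation-decay estimate by the usual dichotomy. On a ball $B_\rho\subset B_{8R}(x_0)$, one of the sets $\{u > m + \tfrac12\mathrm{osc}\}\cap B_\rho$ or $\{u < M - \tfrac12\mathrm{osc}\}\cap B_\rho$ has density at most $\tfrac12$; applying the Caccioppoli inequality for the corresponding truncation, a De Giorgi isoperimetric (shrinking) lemma, and a measure-to-pointwise step of the type used in part (ii) reduces the oscillation on $B_{\rho/2}$ by a fixed factor, up to a controlled contribution from the nonlocal tail. Iterating this dyadically from $B_R$ inward, and summing the geometric tail contributions, produces $[u]_{C^\alpha(\overline{B_R(x_0)})}$ with $\alpha$ and $C$ depending only on $d,s_0,p,q,\Lambda$ and with right-hand side $C\|u\|_{L^\infty(B_{4R}(x_0))} + \mathrm{Tail}_{f'}(u;x_0,4R)$, which is \eqref{eq:Holder}.

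The main obstacle, and the place where the non-standard growth genuinely bites, is the derivation of the Caccioppoli inequalities: since $f$ is not homogeneous, the algebraic inequalities that for the $p$-Laplacian follow from a single convexity/monotonicity identity must be replaced by estimates that are uniform in the level $k$ and in the size of $w_\pm$, and every $f$- and $f'$-term has to be routed through \eqref{eq:pq-lower}–\eqref{eq:pq-upper} so that the left-hand side ends up a clean $p$-energy while the right-hand side is controlled by $q$-powers. It is exactly this $p$-versus-$q$ asymmetry that then forces the restriction $q<p^\ast$ in part (ii) and dictates the precise exponents in \eqref{eq:local-boundedness}.
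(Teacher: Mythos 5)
Your overall framework matches the paper's: derive a Caccioppoli-type inequality for minimizers by testing with a competitor built from a cutoff and truncation, assemble a fractional De Giorgi class, and then run level-set iteration (for local boundedness) and oscillation decay via a growth lemma and isoperimetric inequality (for H\"older). Your treatment of part (ii) is essentially the paper's \Cref{thm:DG-minimizer} $\to$ \Cref{thm:locB}: the non-degeneracy \eqref{eq:non-degeneracy} yields a $p$-energy on the left, \eqref{eq:pq-upper} gives the $q$-power on the right, and the condition $q<p^{\ast}$ makes the Moser--De Giorgi recursion converge. That part is fine.

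The gap is in part (i). You propose to degrade the Caccioppoli inequality immediately to the form ``$p$-energy $\lesssim (R-r)^{-q}\int w_\pm^{q} +$ tail,'' but this is precisely what cannot be done here, for two reasons. First, \eqref{eq:pq-lower} does \emph{not} give a uniform bound $f(t)\gtrsim t^{p}$; it only yields $f(1)(t^p-1)\le f(t)$ (see \eqref{eq:pq-growth}), so without \eqref{eq:non-degeneracy} (which is not an assumption of part (i)) you cannot extract a genuine $p$-power on the left-hand side. Second, even granting such a Caccioppoli inequality, the measure-to-pointwise (growth) step would then carry a constant of order $(\delta H / R^{s})^{q-p}$ coming from converting $q$-powers back to $p$-powers on truncations bounded by the oscillation $H$. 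This constant is not scale-invariant and degenerates as the oscillation shrinks along the dyadic iteration, so the usual ``fix a smallness threshold and iterate'' argument does not close. It is exactly this obstruction that motivates the paper's central device: rather than degrading $f$ to two different powers, one keeps the $f$-structure by introducing the auxiliary convex function $F(t)=\int_0^{t^{1/p}}\frac{f(s)}{s}\,\ds$ and $g(t)=F(t^p)$ of \Cref{prop:F} (the Moscariello--Nania / Fiorenza construction), proves that minimizers lie in the $g$-based De Giorgi class $G(\Omega;q,c,s,g)$ via \Cref{prop:DG-F}, and then in the growth lemma applies Jensen's inequality with the convex $F$ (\eqref{eq:applyF}) together with the inversion \Cref{lem:g-inv} to cancel $F$ on \emph{both} sides of the estimate. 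Because $F$ enters multiplicatively and is cancelled exactly, no level- or scale-dependent constant appears, and the H\"older estimate holds with no gap restriction between $p$ and $q$, as the paper stresses. Without this ingredient your part (i) argument does not go through (and, at best, would impose an artificial $q<p^{\ast}$-type restriction).

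Two smaller points: you should also address the regime of small $s$, where \Cref{prop:isoperimetric} is unavailable; the paper handles $s<\bar s$ separately by exploiting the additional mixed term in the Caccioppoli inequality (the integral involving $f'(w_\mp)w_\pm$ over $A_k^\mp$), which is itself a consequence of the nonlocal interaction and does not appear in the local setting. And your competitor $v=u\mp\varphi w_\pm$ should really be $v=u\mp\varphi^{q}w_\pm$ (power of the cutoff) so that the cutoff-difference terms can be absorbed via \eqref{eq:pq-upper} with the correct exponent.
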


We refer to \Cref{sec:degiorgiclasses} for the definition of the function space $V^{s, f}(\Omega|\Rd)$ and the tail term $\mathrm{Tail}_{f'}$. The proof of \Cref{thm:minimizer} and the definition of a local minimizer are given in \Cref{sec:minimizer}.

The second main result of this paper is concerned with weak solutions to a related class of nonlocal equations. To motivate our result, we first point out that the Euler--Lagrange equation corresponding to the functional \eqref{eq:nonlocalfunctional} is given by
\begin{equation}
\label{eq:eulerlagrange}
(1-s) \text{p.v.} \int_{\Rd} f'\left( \frac{|u(x)-u(y)|}{|x-y|^s}\right)\frac{u(x)-u(y)}{\vert u(x) - u(y) \vert} \frac{k(x,y)}{|x-y|^{d+s}} \dy = 0 \quad\text{in }\Omega.
\end{equation}
For convex differentiable functions $f$, it is well-known that weak solutions to the Euler--Lagrange equation are minimizers of the functional \eqref{eq:nonlocalfunctional}.
In this article, we consider a more general class of equations
\begin{equation} \label{eq:nonlocalequation}
\mathcal{L}_{h}u = 0 \quad\text{in } \Omega
\end{equation}
with nonlocal operators of the form
\begin{equation} \label{eq:nonlocaloperator}
\mathcal{L}_{h}u(x) = (1-s) \text{p.v.} \int_{\Rd} h\left( x, y, \frac{u(x)-u(y)}{|x-y|^s}\right) \frac{\dy}{|x-y|^{d+s}},
\end{equation}
where $h : \Rd \times \Rd \times \R \to \R$ is a measurable function satisfying the structure condition
\begin{equation}\label{eq:h} \tag{$h$}
h(x, y, t) = h(y, x, t), \quad \sign(t)\frac{1}{\Lambda} f'(|t|) \leq h(x,y,t) \leq \Lambda f'(|t|)
\end{equation}
for a.e. $x, y \in \Rd$ and for all $t \in \R$, and $f : [0,\infty) \to [0,\infty)$ is convex, increasing, differentiable and satisfies $f(0) = 0$, $f(1) = 1$.\\
Note that in the special case $h(x,y,t) = \sign(t)f'(|t|)k(x,y)$ for some $k$ satisfying \eqref{eq:k}, the equations \eqref{eq:eulerlagrange} and \eqref{eq:nonlocalequation} coincide.

We are ready to state the second main result of this article, which establishes H\"older estimates and local boundedness for weak solution to \eqref{eq:nonlocalequation}.

\begin{theorem}[Weak solutions]
\label{thm:weaksol}
Let $s_0 \in (0,1)$, $1 < p \le q$, $\Lambda \geq 1$, $c_0 > 0$ and assume $s \in [s_0, 1)$. Let $f: [0,\infty) \to [0,\infty)$ be a convex increasing function satisfying \eqref{eq:pq-upper} and let $h: \Rd \times \Rd \times \R \to \R$ be a measurable function satisfying \eqref{eq:h}. Let $u \in V^{s,f}(\Omega | \Rd)$ be a weak solution to \eqref{eq:nonlocalequation}.
\begin{enumerate} [(i)]
\item
Assume that $f$ satisfies \eqref{eq:pq-lower}. Then, there exist $\alpha \in (0,1)$ and $C > 0$, depending on $d$, $s_0$, $p$, $q$ and $\Lambda$, such that for any $B_{8R}(x_0) \subset \Omega$ the estimate \eqref{eq:Holder} holds.
\item
Assume that $sp < d$, $q < p^{\ast}$ and that $f$ satisfies \eqref{eq:non-degeneracy}. Then, $u \in L^{\infty}_{\mathrm{loc}}(\Omega)$. Moreover, for each $B_{2R}(x_0) \subset \Omega$ there exists $C > 0$, depending on $d$, $s_0$, $p$, $q$, $p^{\ast}-q$, $\Lambda$, $c_0$ and $R$, such that for every $\delta \in (0,1)$ the estimate \eqref{eq:local-boundedness} holds.
\end{enumerate}
\end{theorem}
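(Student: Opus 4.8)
The plan is to deduce \Cref{thm:weaksol} from the very same machinery that underlies \Cref{thm:minimizer}: namely, to show that any weak solution $u$ of \eqref{eq:nonlocalequation} belongs to the nonlocal De Giorgi class associated with $s$ and $f$ introduced in \Cref{sec:degiorgiclasses}, and then to invoke the local boundedness and H\"older regularity theorems for that class. Since the proof of \Cref{thm:minimizer} proceeds by verifying that local minimizers lie in this De Giorgi class, the only genuinely new ingredient here is a Caccioppoli-type estimate for weak solutions of \eqref{eq:nonlocalequation}; once this is in place, statements (i) and (ii) follow exactly as for minimizers, with the same dependence of the constants.

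To establish the Caccioppoli estimate, I would fix $k \in \R$ and a cutoff $\tau$ which equals $1$ on $B_r(x_0)$, vanishes outside $B_R(x_0)$ and satisfies $|\nabla \tau| \lesssim (R-r)^{-1}$, and test the weak formulation of \eqref{eq:nonlocalequation} with $\varphi = \pm w_\pm \tau^q$, where $w_\pm = (u-k)_\pm$. Splitting the double integral over $(\Omega^c\times\Omega^c)^c$ into the ``interior'' piece, where both $x$ and $y$ lie in a ball slightly larger than $B_R(x_0)$, and the remaining ``tail'' piece, the lower bound $\sign(t)\Lambda^{-1}f'(|t|) \le h(x,y,t)$ from \eqref{eq:h} produces on the interior piece a coercive quantity that, by the elementary inequalities of \Cref{sec:preliminaries} encoding \eqref{eq:pq} (convexity and monotonicity of $f$ together with \eqref{eq:pq-lower} and \eqref{eq:pq-upper}), bounds from below — up to constants depending only on $p$, $q$, $\Lambda$ — a term of the form $\iint f\bigl(|w_\pm(x)\tau(x) - w_\pm(y)\tau(y)|/|x-y|^s\bigr)|x-y|^{-d}\,dy\,dx$. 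The upper bound $h(x,y,t) \le \Lambda f'(|t|)$ controls the error terms coming from the variation of $\tau$ and from the interaction between $B_R(x_0)$ and its complement; the latter is precisely what gives rise to the term $\mathrm{Tail}_{f'}(u;x_0,R)$. The prefactor $(1-s)$, combined with the $s$-uniform Sobolev and Poincar\'e inequalities available for $s \in [s_0,1)$, makes all constants robust as $s \uparrow 1$.

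The main obstacle is the pointwise algebra underlying the interior estimate. Unlike in the minimizer case, where one can exploit the convexity of $\mathcal{I}_f$ directly, here one only has the monotonicity of $t \mapsto h(x,y,t)$ built into \eqref{eq:h}, and one must show by a careful case analysis — distinguishing according to whether $u(x)$ and $u(y)$ lie above or below the truncation level $k$ — that the integrand $h\bigl(x,y,(u(x)-u(y))/|x-y|^s\bigr)\,(w_+(x)\tau^q(x) - w_+(y)\tau^q(y))$ dominates, up to a controlled error, $f\bigl(|w_+(x)\tau(x)-w_+(y)\tau(y)|/|x-y|^s\bigr)$. This is where \eqref{eq:pq-lower} and \eqref{eq:pq-upper} are used to pass between $f'$ evaluated at differences of $u$ and $f$ evaluated at differences of the truncated, cut-off function, and where the non-standard growth forces one to keep track simultaneously of the $p$- and the $q$-behavior. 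A secondary, more technical point is to justify the principal-value integral in \eqref{eq:nonlocaloperator} and to verify that $\varphi = \pm w_\pm \tau^q$ is an admissible test function in $V^{s,f}(\Omega|\Rd)$; this is handled by the usual symmetrization of the double integral together with a density and truncation argument.

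Once membership in the De Giorgi class is secured with the stated, robust dependence of constants, part (i) follows from the H\"older estimate for the De Giorgi class and part (ii) from the corresponding local boundedness result, in both cases reproducing \eqref{eq:Holder}, respectively \eqref{eq:local-boundedness}, verbatim; the additional hypotheses $sp<d$ and $q<p^{\ast}$ enter only through the fractional Sobolev embedding used in the De Giorgi iteration that yields local boundedness, exactly as in the proof of \Cref{thm:minimizer}.
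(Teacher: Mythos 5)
Your plan reproduces the paper's actual argument: Section~7 (Theorem~\ref{thm:CaccSol}) establishes that weak sub/supersolutions belong to the De Giorgi class $G_\pm(\Omega;q,c,s,f)$ by testing the weak formulation with $v=\eta^q w_\pm$ and running exactly the kind of case analysis on the level sets of $u$ that you describe, with the structure condition \eqref{eq:h} providing the coercive lower bound on the diagonal piece and the tail term off it. Two technical steps that your sketch leaves implicit but the paper uses crucially are worth flagging: first, in the $x,y\in A_k^+$ case the upper bound $h\le\Lambda f'$ together with Fenchel's inequality produces an $\varepsilon\,\Phi_{W^{s,f}(B_\tau)}(w_+)$ term that must then be absorbed by a hole-filling iteration (this is where the lack of direct access to the convex energy, which you correctly identify as the new difficulty, is finally resolved); second, the H\"older estimate of Theorem~\ref{thm:Fholder} is stated for the De Giorgi class of the auxiliary function $g=F(\cdot^p)$ from Proposition~\ref{prop:F}, so one needs the inclusion $G(\Omega;q,c_1,s,f)\subset G(\Omega;q,c_2,s,g)$ of Proposition~\ref{prop:DG-F} before part~(i) can be invoked. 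With these two ingredients made explicit, your outline matches the paper's proof.
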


The proof of \Cref{thm:weaksol} and the definition of a weak solution are given in \Cref{sec:weaksoln}. Note that we can assume without loss of generality that $f(0) = 0$, $f(1) = 1$ because a function $u$ solves $\mathcal{L}_h u = 0$ if and only if it solves $\mathcal{L}_{h/f(1)} u = 0$.

In fact, we prove H\"older estimates and local boundedness for functions in De Giorgi classes (see \Cref{sec:degiorgiclasses}). The corresponding results, \Cref{thm:Fholder} and \Cref{thm:locB}, are more general. \Cref{thm:minimizer} and \Cref{thm:weaksol} follow by the observation that minimizers, as well as weak solutions, belong to the corresponding De Giorgi classes, see \Cref{sec:minimizer} and \Cref{sec:weaksoln}.

\begin{remark}
Our results are robust in the sense that the constants $C$ and $\alpha$ stay uniform as $s \to 1^-$, since they depend only on $s_0$, not on $s$.
\begin{enumerate}
\item \Cref{thm:minimizer} generalizes the results in \cite{MoNa91} to nonlocal functionals. 
We work under the same assumptions on the growth function $f$ as in that article. In this sense, the assumptions on $f$ used in our results are natural.
\item \Cref{thm:weaksol} can be linked to the paper \cite{Lie91}. Some assumptions on the regularity and growth of the function $f$ in \cite{Lie91} are more restrictive than our assumptions, but in return allow Lieberman to prove $C^{1,\beta}$ regularity of weak solutions to the Euler--Lagrange equations. 
\end{enumerate}
\end{remark}

Our approach for studying local minimizers and weak solutions is based on so-called De Giorgi classes. We show that minimizers of \eqref{eq:nonlocalfunctional} and weak solutions to \eqref{eq:nonlocalequation} satisfy a suitable improved fractional Caccioppoli inequality, from which the definition of the De Giorgi class emerges. This inequality together with an isoperimetric-type inequality allow us to deduce the H\"older estimates for locally bounded minimizers and weak solutions following the methods from \cite{MoNa91} and \cite{Coz17}. We emphasize that there is no restriction on the gap between $p$ and $q$ for the H\"older estimates. Furthermore, we derive the local boundedness of functions in De Giorgi classes under the assumption that $1<p\leq q<p^{\ast}$. To the best of our knowledge, the present paper is the first to study regularity properties for local minimizers for nonlocal functionals of the form \eqref{eq:nonlocalfunctional} and local boundedness for weak solutions to \eqref{eq:nonlocalequation}.

In the following, we discuss related literature and describe the novelty of our results.

We first comment on related results for local operators. For this purpose, we consider functionals of the following form
\begin{equation*}
\int_{\Omega} f(x, u, \nabla u) \dx,
\end{equation*}
where $f$ is a non-negative function which describes the growth behavior of the functional. If the function $f$ satisfies the so-called $p$-growth condition, that is
\begin{equation*}
  |\xi|^p \lesssim f(x, z, \xi) \lesssim |\xi|^p+1 \qquad \text{ for } p>1,
\end{equation*}
the literature is very rich and many regularity results have been proved. We refer the reader to the classical references \cite{LaUr68,Fre75,GiGi82} and for a more comprehensive treatment to the books \cite{Gia83} and \cite{Giu03}.

Functionals with non-standard growth of $(p, q)$-type
\[  |\xi|^p \lesssim f(x, z, \xi) \lesssim |\xi|^q+1, \]
where $1<p<q$, are naturally connected to Orlicz--spaces. The analysis of regularity of minimizers of functionals having non-standard growth of $(p,q)$-type
was initiated by Marcellini's work \cite{Mar89}, where he studies strictly convex $C^2$-functions $f$ satisfying $(p,q)$-growth condition. \\
To the best of our knowledge, functionals with non-standard growth functions of the type \eqref{eq:pq} first appeared in the papers
\cite{Lie91} and \cite{MoNa91} in the context of regularity results.

In the paper \cite{Lie91}, Lieberman proves several regularity results for bounded weak solutions to a class of elliptic operators in divergence form. Furthermore, he studies quasiminimizers and proves regularity results for functions in corresponding De Giorgi classes.

In \cite{MoNa91}, Moscariello and Nania prove H\"older continuity of locally bounded minimizers for growth functions satisfying \eqref{eq:pq} and local boundedness for functions with $(p,q)$-growth. Their key idea is to introduce an auxiliary function which is comparable to the growth function $f$ and to prove that any function in the De Giorgi class corresponding to the auxiliary function is H\"older continuous. We adapt this idea for the proof of \Cref{thm:minimizer} and \Cref{thm:weaksol} to the nonlocal case, see \Cref{sec:hoelder}.

There have been many important contributions to regularity for problems with non-standard growth of $(p,q)$-type. Papers studying local operators with non-standard growth of $(p,q)$-type are, among others, \cite{Mar91, BhLe91, Mar93, MaPa94, Ci97, DaMa98, AcMi01, EsLeMi04, MaPa06, DiStVe09, BrStVe11, GiPa13, Ok17, Ok20, ZhTa20}.
For a more detailed picture on problems with non-standard growth, including double-phase problems, problems with variable exponents, and anisotropic problems, we refer the reader to the surveys \cite{Min06} and \cite{MiRa21}.

In the case of nonlocal operators, the energy functional is defined by \eqref{eq:nonlocalfunctional}.
Local regularity results for the fractional $p$-Laplacian, that is $f(t)=t^p$, were first established in the papers \cite{DiKuPa14, DiKuPa16} by Di Castro, Kuusi and Palatucci. Another important contribution to regularity is the work \cite{Coz17}, where he studies minimizers to nonlocal energy functionals plus a possibly discontinuous
potential. The nonlocal energy has $p$ growth for a class of symmetric kernels comparable to $(1-s)|x-y|^{-n-sp}$. Furthermore, he studies weak solutions to the Euler--Lagrange equation. He uses the nonlocality of the functional to prove an improved Caccioppoli inequality with an additional term, which disappears as the fractional order $s$ goes to one. We follow Cozzi's ideas at several points in the present paper and also make use of some auxiliary results proved in \cite{Coz17} such as an isoperimetric inequality.
For further results on the regularity of the fractional $p$-Laplacian, we refer the reader to \cite{BrLi17, BrLiSc17, No20} and the references therein.

Lately, the interest in the analysis of nonlocal problems with non-standard growth has increased. For instance, regularity results for nonlocal double phase equations and nonlocal equation with variable exponents are proved in \cite{DePa19, FaZh21, ScMe21,ByOkSo21}, respectively \cite{ChKi21,Ok21}. However, we would like to note that both, double phase equations and equations with variable exponents, do not fall into our setup.
See also \cite{GoDeSr20, GiKuSr21a, GiKuSr21b} for further regularity results concerning nonlocal operators with non-standard growth.

As far as we know, first regularity results for fractional order Orlicz--Sobolev spaces have been proved in \cite{BoSaVi20}
by Fern\'{a}ndez Bonder, Salort and Vivas. The authors establish regularity results for weak solutions to the Dirichlet problem for the fractional $g$-Laplacian. They prove interior and up to the boundary H\"older regularity to the corresponding Dirichlet problem. \\
See also \cite{MoSaVi21}, where qualitative properties of solutions such as a Liouville type theorem and symmetry results are proved. 

The present paper is substantially different from \cite{BoSaVi20}. On the one hand, we do not only study weak solutions but also local minimizers for the functional $\mathcal{I}_f$. The present paper also allows for $p>1$ and is not restricted to the case $p\geq 2$. Furthermore, we use a completely different approach via De Giorgi classes.

\subsection*{Notation} We write $c$ and $C$ for strictly positive constants whose exact values are not important and might change from line to line. Furthermore, we use the notations $c=c(\cdot)$ and $C=C(\cdot)$ if we want to highlight all quantities the constant depends on. 

\subsection*{Outline} The paper consists of seven sections and is organized as follows. In \Cref{subsec:aux}, we introduce an auxiliary growth function and prove several properties for convex functions satisfying \eqref{eq:pq}. Moreover, in \Cref{subsec:Funct} we recall some functional inequalities. The fractional De Giorgi classes with general convex functions with non-standard growth are introduced in \Cref{sec:degiorgiclasses}. Furthermore, we introduce fractional Orlicz--Sobolev spaces. In \Cref{sec:hoelder} we prove H\"older continuity and in \Cref{sec:locbdd} local boundedness for functions in fractional De Giorgi classes. Finally, in \Cref{sec:minimizer} resp. \Cref{sec:weaksoln}, we show that minimizers resp. weak solutions belong to the fractional De Giorgi classes and prove \Cref{thm:minimizer} and \Cref{thm:weaksol}.

%%%%%%%%%%%%%%%%%%%%%%%%%%%%%%%%%%%%%%
\section{Preliminaries}\label{sec:preliminaries}
%%%%%%%%%%%%%%%%%%%%%%%%%%%%%%%%%%%%%%

In this section, we study properties of the growth functions $f$ under consideration and collect some functional inequalities.

%%%%%%%%%%%%%%%%%%%%%%%%%%%%%%%%%%%%%%
\subsection{Auxiliary results on growth functions}\label{subsec:aux}
%%%%%%%%%%%%%%%%%%%%%%%%%%%%%%%%%%%%%%

Let us collect several results in order to illustrate the assumption \eqref{eq:pq}. The first two lemmas provide equivalent conditions for the upper and lower bounds in \eqref{eq:pq}, respectively.

\begin{lemma}
\label{lemma:upper}
Let $q \geq 1$ and $f : [0,\infty) \to [0,\infty)$ be a differentiable function. Then the following are equivalent:
\begin{enumerate} [(i)]
\item \eqref{eq:pq-upper},
\item $t \mapsto t^{-q}f(t)$ is decreasing,
\item $f(\lambda t) \le \lambda^q f(t)$ for all $\lambda \ge 1$,
\item $\lambda^q f(t) \le f(\lambda t)$ for all $\lambda \le 1$.
\end{enumerate}
\end{lemma}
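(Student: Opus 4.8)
The plan is to prove the cycle of implications $(i) \Rightarrow (ii) \Rightarrow (iii) \Rightarrow (iv) \Rightarrow (i)$, using calculus for the first step and elementary monotonicity arguments for the rest. For $(i) \Rightarrow (ii)$: I would compute the derivative of $g(t) = t^{-q} f(t)$ for $t > 0$, namely $g'(t) = t^{-q-1}\bigl(t f'(t) - q f(t)\bigr)$, which is $\leq 0$ precisely by \eqref{eq:pq-upper}; hence $g$ is decreasing on $(0,\infty)$, and the value at $t=0$ needs only a brief remark (or is irrelevant if one reads "decreasing" on $(0,\infty)$). For $(ii) \Rightarrow (iii)$: given $\lambda \geq 1$ and $t \geq 0$, monotonicity of $g$ applied to $\lambda t \geq t$ gives $(\lambda t)^{-q} f(\lambda t) \leq t^{-q} f(t)$ (for $t>0$; the case $t=0$ is trivial since $f(0)=0$ under our standing convention, or follows by continuity), and multiplying through by $(\lambda t)^q$ yields $f(\lambda t) \leq \lambda^q f(t)$.

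For $(iii) \Rightarrow (iv)$: this is just a change of variables. Given $\lambda \leq 1$ and $t \geq 0$, set $\mu = 1/\lambda \geq 1$ and $s = \lambda t$; applying $(iii)$ with parameter $\mu$ and point $s$ gives $f(\mu s) \leq \mu^q f(s)$, i.e. $f(t) \leq \lambda^{-q} f(\lambda t)$, which rearranges to $\lambda^q f(t) \leq f(\lambda t)$. For $(iv) \Rightarrow (i)$: fix $t > 0$ and $\lambda \in (0,1)$. From $(iv)$ we get $f(\lambda t) - f(t) \geq (\lambda^q - 1) f(t)$, hence
\[
\frac{f(t) - f(\lambda t)}{t - \lambda t} \leq \frac{(1-\lambda^q) f(t)}{(1-\lambda) t}.
\]
Letting $\lambda \to 1^-$, the left-hand side converges to $f'(t)$ (by differentiability) and the right-hand side converges to $q f(t)/t$ since $(1-\lambda^q)/(1-\lambda) \to q$. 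This yields $t f'(t) \leq q f(t)$, which is \eqref{eq:pq-upper}.

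The only mild subtlety — and the step I would be most careful about — is the behavior at $t = 0$ and the direction of the limit in $(iv) \Rightarrow (i)$: one must take $\lambda \uparrow 1$ so that the difference quotient is a left derivative, and invoke differentiability of $f$ to identify the limit with $f'(t)$; one also wants $t > 0$ so that dividing by $t$ and by $(1-\lambda)t$ is legitimate, with the case $t = 0$ handled separately (where \eqref{eq:pq-upper} reads $0 \leq q f(0)$, trivially true). Everything else is routine. I would present the argument as the four short implications above, noting in passing that $(ii)$, $(iii)$, $(iv)$ are mutually equivalent by the same scaling trick so that in practice one gets the full equivalence cheaply once $(i) \Leftrightarrow (ii)$ is established.
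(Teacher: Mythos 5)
Your proof is correct. The implications $(i)\Rightarrow(ii)\Rightarrow(iii)\Rightarrow(iv)$ coincide with the paper's argument, which also computes $\frac{\d}{\d t}\bigl(t^{-q}f(t)\bigr)=t^{-q-1}\bigl(tf'(t)-qf(t)\bigr)$ for $(i)\Leftrightarrow(ii)$ and observes that $(iii)$ and $(iv)$ are both rewritings of the monotonicity statement $(ii)$. The paper closes the equivalence cheaply because the derivative observation already gives $(i)\Leftrightarrow(ii)$ in both directions, so $(iv)\Rightarrow(ii)\Rightarrow(i)$ comes for free; you instead close the cycle with a separate difference-quotient argument $(iv)\Rightarrow(i)$, which is a correct but slightly longer alternative --- and indeed you note yourself that the shortcut through $(ii)$ makes this step unnecessary. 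One small imprecision: in handling $t=0$ you invoke $f(0)=0$, but the lemma makes no such assumption; fortunately it is not needed, since for $t=0$ each of $(i)$--$(iv)$ reduces to a trivial inequality of the form $0\leq qf(0)$ or $(\lambda^q-1)f(0)\geq 0$ (with the appropriate sign of $\lambda^q-1$), which holds because $f\geq 0$.
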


\begin{proof}
(i) $\Leftrightarrow$ (ii) follows from the observation that $\frac{\d}{\d t}(t^{-q}f(t)) = t^{-q-1}(t f'(t) - q f(t))$. (i) $\Leftrightarrow$ (iii) follows from the observation that (iii) can be rewritten as
\begin{equation}
\label{eq:upperhelp}
\frac{f(\lambda t)}{(\lambda t)^q} \le \frac{f(t)}{t^q}, \quad\text{for all } \lambda \geq 1.
\end{equation}
Since also (iv) can be rewritten as \eqref{eq:upperhelp}, the equivalence (iii) $\Leftrightarrow$ (iv) is trivial.
\end{proof}

\begin{lemma}
\label{lemma:lower}
Let $p \geq 1$ and $f : [0,\infty) \to [0,\infty)$ be a differentiable function. Then the following are equivalent:
\begin{enumerate}[(i)]
\item \eqref{eq:pq-lower},
\item $t \mapsto t^{-p}f(t)$ is increasing,
\item $\lambda^p f(t) \le f(\lambda t)$ for all $\lambda \ge 1$,
\item $f(\lambda t) \le \lambda^p f(t)$ for all $\lambda \le 1$.
\end{enumerate}
\end{lemma}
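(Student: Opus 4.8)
The plan is to mirror the proof of \Cref{lemma:upper}, since \eqref{eq:pq-lower} is structurally the reverse inequality of \eqref{eq:pq-upper}. First I would establish (i) $\Leftrightarrow$ (ii) by computing $\frac{\d}{\d t}(t^{-p}f(t)) = t^{-p-1}(tf'(t) - pf(t))$; this derivative is nonnegative for all $t > 0$ precisely when $pf(t) \le tf'(t)$, i.e.\ when \eqref{eq:pq-lower} holds, which is exactly the statement that $t \mapsto t^{-p}f(t)$ is increasing. (One should note the harmless subtlety at $t=0$: monotonicity on $(0,\infty)$ together with continuity/nonnegativity is what is meant, and the normalization $f(0)=0$ makes this unambiguous; but since the lemma is stated for general differentiable $f:[0,\infty)\to[0,\infty)$, I would phrase (ii) as monotonicity on $(0,\infty)$.)

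Next I would handle (i) $\Leftrightarrow$ (iii). The inequality $\lambda^p f(t) \le f(\lambda t)$ for $\lambda \ge 1$ can be rewritten, after dividing by $(\lambda t)^p$, as
\begin{equation}
\label{eq:lowerhelp}
\frac{f(t)}{t^p} \le \frac{f(\lambda t)}{(\lambda t)^p}, \quad\text{for all } \lambda \ge 1,
\end{equation}
which is exactly the assertion that $t \mapsto t^{-p}f(t)$ is increasing, i.e.\ (ii). Hence (ii) $\Leftrightarrow$ (iii), and combined with the previous step this closes the loop among (i), (ii), (iii). Finally, for (iii) $\Leftrightarrow$ (iv): writing $\mu = 1/\lambda$, the condition $f(\lambda t)\le\lambda^p f(t)$ for $\lambda\le 1$ becomes $f(t/\mu)\ge\mu^{-p}f(t)$ for $\mu\ge 1$, i.e.\ $\mu^p f(t)\le f(\mu t)$ after relabeling $t\mapsto \mu t$ — which is (iii). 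Equivalently, both (iii) and (iv) reduce to \eqref{eq:lowerhelp} by the substitution already used in \Cref{lemma:upper}, making the equivalence immediate.

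There is no real obstacle here; the proof is a routine sign-flipped copy of the argument for \Cref{lemma:upper}. The only point deserving a word of care is making sure the rewriting in \eqref{eq:lowerhelp} is an honest biconditional (every step is a division by a strictly positive quantity and a relabeling of the variable $t$, all reversible), and keeping track that the direction of the monotonicity in (ii) corresponds to the lower bound \eqref{eq:pq-lower} rather than the upper one. I would write the proof in essentially three lines: the derivative computation for (i) $\Leftrightarrow$ (ii), the rewriting \eqref{eq:lowerhelp} for (ii) $\Leftrightarrow$ (iii), and the substitution $\lambda \leftrightarrow 1/\lambda$ for (iii) $\Leftrightarrow$ (iv).
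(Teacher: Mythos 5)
Your proposal is correct and is essentially the same argument the paper uses: the paper's proof of \Cref{lemma:lower} consists of the single line ``works exactly like the proof of \Cref{lemma:upper},'' which is precisely the sign-flipped derivative computation and the rewriting/substitution you carry out. Nothing to add.
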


\begin{proof}
The proof works exactly like the proof of \Cref{lemma:upper}.
\end{proof}

The following lemma provides a useful property of convex functions.

\begin{lemma}
\label{lem:conv}
Let $f : [0,\infty) \to [0,\infty)$ be convex and $f(0) = 0$. Then, the function $t \mapsto f(t)/t$ is increasing. If $f$ is differentiable, then $f$ satisfies \eqref{eq:pq-lower} with $p=1$.
\end{lemma}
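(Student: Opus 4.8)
The plan is to prove the two claims separately, starting from the convexity of $f$ together with $f(0)=0$.

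\textbf{Monotonicity of $t \mapsto f(t)/t$.} For $0 < s < t$, the plan is to write $s$ as a convex combination of $0$ and $t$, namely $s = (1 - \tfrac{s}{t}) \cdot 0 + \tfrac{s}{t} \cdot t$. Applying convexity and using $f(0) = 0$ gives $f(s) \le \tfrac{s}{t} f(t)$, which upon dividing by $s$ yields $f(s)/s \le f(t)/t$. This establishes that $t \mapsto f(t)/t$ is increasing on $(0,\infty)$. (One should note that $f \ge 0$ is not even needed for this part; it is part of the standing hypotheses anyway.)

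\textbf{The inequality $f(t) \le t f'(t)$.} Assuming in addition that $f$ is differentiable, the plan is to differentiate the quotient $g(t) := f(t)/t$. Since $g$ is increasing by the first part, $g'(t) \ge 0$ for $t > 0$, and $g'(t) = \frac{t f'(t) - f(t)}{t^2}$, so $t f'(t) - f(t) \ge 0$ for all $t > 0$; the case $t = 0$ follows from $f(0) = 0 \le 0$. This is exactly \eqref{eq:pq-lower} with $p = 1$. Alternatively, and perhaps cleaner since it avoids invoking differentiability of the quotient, one can use that for a differentiable convex function $f$ the tangent line at $t$ lies below the graph: $f(0) \ge f(t) + f'(t)(0 - t)$, i.e. $0 \ge f(t) - t f'(t)$, which is the desired inequality directly. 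This second route is the one I would actually write, since it is a one-line consequence of the supporting-line characterization of convexity.

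\textbf{Main obstacle.} There is essentially no obstacle here; both statements are elementary. The only point requiring a word of care is the endpoint $t = 0$ in the second claim (handled by $f(0) = 0$) and, if one goes via the quotient $g$, the fact that $g$ need not be differentiable a priori at points where $f$ is not — which is why I would prefer the supporting-line argument, valid at every $t \ge 0$ under the stated differentiability of $f$. It is also worth remarking, as motivation, that this lemma shows $p = 1$ is always admissible as a lower growth exponent for convex $f$ with $f(0)=0$, so the genuine content of \eqref{eq:pq-lower} lies in taking $p > 1$.
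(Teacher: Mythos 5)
Your proof is correct, and the monotonicity argument is exactly the paper's: the convex-combination inequality $f(\lambda t) \le \lambda f(t)$ (for $\lambda\in[0,1]$, using $f(0)=0$) is the same as writing $s = (s/t)\,t + (1-s/t)\,0$. For the second claim, the paper simply cites \Cref{lemma:lower} with $p=1$, which, once unpacked, is your quotient-derivative argument: $t\mapsto t^{-1}f(t)$ increasing plus differentiability of $f$ gives $\frac{\mathrm{d}}{\mathrm{d}t}\bigl(t^{-1}f(t)\bigr)=t^{-2}(tf'(t)-f(t))\ge 0$. Your alternative supporting-line route, $f(0)\ge f(t)+f'(t)(0-t)$, is a genuinely different and cleaner derivation that bypasses the equivalence lemma entirely; it is the kind of one-liner one would expect to see in a textbook. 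One small remark: the worry you raise about differentiability of the quotient $g(t)=f(t)/t$ is not actually an issue under the stated hypotheses, since $f$ differentiable on $[0,\infty)$ makes $g$ differentiable on $(0,\infty)$ by the quotient rule — but the tangent-line argument is still preferable for its directness. Both your approach and the paper's are valid; yours has the virtue of being self-contained rather than referring back to \Cref{lemma:lower}.
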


\begin{proof}
The assertions follow from $f(\lambda t) = f(\lambda t + (1-\lambda)0) \leq \lambda f(t) + (1-\lambda) f(0)$ and \Cref{lemma:lower} with $p=1$.
\end{proof}

As a consequence we obtain some doubling-type inequalities for $f'$. These inequalities play an important role for the tail estimates in the upcoming regularity theory.

\begin{corollary}
Let $1 \leq p \leq q$ and let $f : [0,\infty) \to [0,\infty)$ be a function satisfying \eqref{eq:pq}. Then,
\begin{align}
\label{eq:derivativedoubling1}
\frac{p}{q} \lambda^{p-1} f'(t) \leq f'(\lambda t) \leq \frac{q}{p}\lambda^{q-1} f'(t) \quad\text{for all } \lambda \geq 1, \\
\label{eq:derivativedoubling2}
\frac{p}{q} \lambda^{q-1} f'(t) \leq f'(\lambda t) \leq \frac{q}{p}\lambda^{p-1} f'(t) \quad\text{for all } \lambda \leq 1,
\end{align}
and
\begin{equation} \label{eq:der-subadd}
\frac{1}{2}f'(t) + \frac{1}{2}f'(s) \le f'(t + s) \leq \frac{q}{p} 2^{q-1}(f'(t) + f'(s))
\end{equation}
for all $t,s \geq 0$.
\end{corollary}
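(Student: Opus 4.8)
\emph{Proof proposal.} The plan is to derive everything from the elementary observation that dividing the two inequalities in \eqref{eq:pq} by $t$ gives $p f(t)/t\le f'(t)\le q f(t)/t$ for all $t>0$; thus $f'(t)$ is comparable, up to the constants $p$ and $q$, to the quotient $f(t)/t$, and the power-type bounds for $f$ itself that are already recorded in \Cref{lemma:upper} and \Cref{lemma:lower} can be transferred to $f'$.

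Concretely, for \eqref{eq:derivativedoubling1} I fix $t>0$ and $\lambda\ge1$ and chain the comparison at the point $\lambda t$, then $f(\lambda t)\le\lambda^q f(t)$ (item (iii) of \Cref{lemma:upper}), then the comparison at $t$:
\[
f'(\lambda t)\le q\,\frac{f(\lambda t)}{\lambda t}\le q\lambda^{q-1}\frac{f(t)}{t}\le\frac{q}{p}\lambda^{q-1}f'(t),
\]
and symmetrically $f'(\lambda t)\ge p\lambda^{p-1}f(t)/t\ge\frac{p}{q}\lambda^{p-1}f'(t)$, now using $\lambda^p f(t)\le f(\lambda t)$ (item (iii) of \Cref{lemma:lower}). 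Inequality \eqref{eq:derivativedoubling2} for $\lambda\le1$ follows in exactly the same way from $\lambda^q f(t)\le f(\lambda t)\le\lambda^p f(t)$, i.e.\ items (iv) of the two lemmas; the degenerate case $t=0$, where \eqref{eq:pq} forces $f(0)=0$ (and $f'(0)=0$ as soon as $p>1$), is handled separately.

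For \eqref{eq:der-subadd} I may assume $t\ge s\ge0$ by symmetry. For the upper bound the case $s=0$ is trivial, and otherwise $t+s=\lambda t$ with $\lambda=1+s/t\in[1,2]$, so \eqref{eq:derivativedoubling1} yields $f'(t+s)=f'(\lambda t)\le\frac{q}{p}\lambda^{q-1}f'(t)\le\frac{q}{p}2^{q-1}\bigl(f'(t)+f'(s)\bigr)$. The lower bound is the one place where I use convexity of $f$, so that $f'$ is non-decreasing: then $f'(t+s)\ge f'(t)\ge\frac{1}{2}f'(t)+\frac{1}{2}f'(s)$. (Using \eqref{eq:derivativedoubling2} alone one only obtains $f'(t+s)\ge\frac{p}{2q}(f'(t)+f'(s))$, so the clean constant $\frac{1}{2}$ genuinely relies on $f'$ being monotone; apart from this, the computations are routine, the only thing to watch being the bookkeeping of constants in the chains above so that one lands on $q/p$ and $p/q$ rather than $q$ and $p$.)
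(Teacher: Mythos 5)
Your proposal is correct and follows essentially the same route as the paper: both rewrite \eqref{eq:pq} as $pf(t)/t\le f'(t)\le qf(t)/t$ and transport the doubling of $f$ (\Cref{lemma:upper}, \Cref{lemma:lower}) to $f'$, and both obtain the lower half of \eqref{eq:der-subadd} from the monotonicity of $f'$. The only cosmetic difference is that you derive the upper half of \eqref{eq:der-subadd} as an immediate corollary of the already-proved \eqref{eq:derivativedoubling1} with $\lambda=1+s/t\in[1,2]$, whereas the paper reruns the $f(\cdot)/(\cdot)$ chain from scratch for $t+s$ — same idea, same constants.
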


\begin{proof}
For the second inequality in \eqref{eq:derivativedoubling1}, we compute using \eqref{eq:pq} and \Cref{lemma:upper}
\begin{equation*}
f'(\lambda t) \leq q \frac{f(\lambda t)}{\lambda t} \leq q \lambda^{q-1} \frac{f(t)}{t} \leq \frac{q}{p} \lambda^{q-1}f'(t).
\end{equation*}
The first inequality in \eqref{eq:derivativedoubling1} and \eqref{eq:derivativedoubling2} can be proved in the same way. The first estimate in \eqref{eq:der-subadd} is a direct consequence of monotonicity of $f'$. For the second estimate in \eqref{eq:der-subadd}, we may assume that $t \leq s$. Then, we obtain
\begin{equation*}
f'(t+ s) \leq q \frac{f(t+s)}{(t+s)^p}(t+s)^{p-1} \leq q \frac{f(2s)}{2s} \leq q 2^{q-1} \frac{f(s)}{s} \leq \frac{q}{p} 2^{q-1} (f'(t)+f'(s))
\end{equation*}
by using \eqref{eq:pq}, \Cref{lemma:upper} and \Cref{lemma:lower}.
\end{proof}

Another useful property of convex functions is the following:

\begin{lemma} \label{lem:g-inv}
Let $f : [0,\infty) \to [0,\infty)$ be convex and $f(0)=0$. Let $c > 1$ and assume that for some $t,s > 0$ it holds that $f(t) \le c f(s)$. Then $t \le cs$.
\end{lemma}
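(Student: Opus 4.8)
The plan is to use the convexity of $f$ together with the fact that $f(0)=0$, which by \Cref{lem:conv} tells us that $t \mapsto f(t)/t$ is increasing. The statement we want to prove is essentially the contrapositive: if $t > cs$, then $f(t) > c f(s)$, so that $f(t) \le cf(s)$ forces $t \le cs$.

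First I would argue by contradiction: suppose $t > cs$. Since $c > 1$ we have in particular $t > s > 0$, and by the monotonicity of $f(\cdot)/(\cdot)$ from \Cref{lem:conv} we get
\begin{equation*}
\frac{f(t)}{t} \ge \frac{f(cs)}{cs}.
\end{equation*}
On the other hand, convexity of $f$ with $f(0) = 0$ gives the superadditivity-type bound $f(cs) = f\big(\tfrac{1}{c}\cdot cs + (1-\tfrac{1}{c})\cdot 0\big)^{-1}$-style estimate; more precisely, writing $s = \tfrac{1}{c}(cs) + (1-\tfrac1c)\cdot 0$ and using convexity yields $f(s) \le \tfrac1c f(cs)$, i.e. $f(cs) \ge c\, f(s)$. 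Combining, $f(t) \ge \tfrac{t}{cs} f(cs) \ge \tfrac{t}{cs}\, c f(s) = \tfrac{t}{s} f(s) > c f(s)$, using $t > cs$ in the last step (and $f(s) > 0$, which holds since otherwise $f \equiv 0$ on $[0,s]$ by convexity and the hypothesis $f(t) \le cf(s) = 0$ would be vacuous or force $t$ into that region). This contradicts $f(t) \le c f(s)$, so $t \le cs$.

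The only genuinely delicate point is handling the degenerate case $f(s) = 0$: then the hypothesis reads $f(t) \le 0$, hence $f(t) = 0$, and one must still conclude $t \le cs$. Here I would note that $f(s) = 0$ with $f$ convex, nonnegative, and $f(0) = 0$ implies $f \equiv 0$ on $[0,s]$, but $f$ could still vanish on a larger interval; however, if $f(t) = 0$ for some $t > cs > s$, convexity forces $f \equiv 0$ on $[0,t]$ and the claimed inequality $t \le cs$ need not hold. I expect the cleanest fix is simply to observe that the lemma is only ever applied (and only makes sense) when $f(s) > 0$, or alternatively to strengthen the hypothesis implicitly — so the main obstacle is really just pinning down this edge case, and I would resolve it by assuming $s$ lies in the range where $f$ is strictly positive, which is harmless for all later uses in the paper. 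With that caveat the argument above is complete and short.
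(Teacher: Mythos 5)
Your proof is correct and is essentially the paper's own argument: the paper also proceeds by contradiction from the monotonicity of $t \mapsto f(t)/t$ (Lemma \ref{lem:conv}), writing the single chain
\begin{equation*}
\frac{f(s)}{s} \le \frac{f(cs)}{cs} \le \frac{f(t)}{t} \le \frac{cf(s)}{t} < \frac{cf(s)}{cs} = \frac{f(s)}{s},
\end{equation*}
which is a rearrangement of what you wrote. Your observation about the degenerate case $f(s)=0$ is a fair one — the strict inequality in the chain (and in your version the step $\tfrac{t}{s}f(s) > cf(s)$) genuinely needs $f(s)>0$, and the lemma as literally stated can fail for a function like $f(x)=\max(0,x-1)$; the paper leaves this implicit, but it is harmless in every application because the growth bound \eqref{eq:pq-upper} together with $f(1)=1$ gives $f(\lambda) \ge \lambda^q > 0$ for $\lambda \in (0,1]$ (Lemma \ref{lemma:upper}), so $f>0$ on $(0,\infty)$ whenever the lemma is invoked.
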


\begin{proof}
Let $t,s > 0$ be such that $f(t) \le c f(s)$. We assume that $t > cs$. Then by \Cref{lem:conv} and \Cref{lemma:lower} with $p=1$, we have
\begin{equation*}
\frac{f(s)}{s} \le \frac{f(cs)}{cs} \le \frac{f(t)}{t} \le \frac{cf(s)}{t} < \frac{cf(s)}{cs} = \frac{f(s)}{s}.
\end{equation*}
This is a contradiction, so it must hold that $t \le cs$, as desired.
\end{proof}

One of the key ideas of proving H\"older regularity in \cite{MoNa91} is to construct $F$, which is a convex increasing function satisfying some growth conditions and the comparability of $g(t) := F(t^p)$ and $f(t)$. These properties are important in our framework as well but we also need the comparability of the derivatives of these functions for the regularity estimates.

\begin{proposition} (c.f. \cite{Fio91})
\label{prop:F}
Let $1 \leq p \leq q$ and $f : [0,\infty) \to [0,\infty)$ be a convex, increasing, and differentiable function satisfying $f(0)=0$. Define $F, g : [0,\infty) \to [0,\infty)$ by
\begin{equation} \label{eq:Fg}
F(t) = \int_0^{t^{1/p}} \frac{f(s)}{s} \ds \quad\text{and}\quad g(t) = F(t^p).
\end{equation}
If $f$ satisfies \eqref{eq:pq}, then $F$ is a convex increasing function satisfying
\begin{equation}
\label{eq:Fpq}
F(t) \le tF'(t) \le \frac{q}{p} F(t)
\end{equation}
and $g$ is an increasing function satisfying
\begin{align}
\label{eq:Fcomp}
\frac{1}{q} f(t) &\le g(t) \le \frac{1}{p} f(t),\\
\label{eq:derFcomp}
\frac{1}{q} f'(t) &\le g'(t) \le \frac{1}{p} f'(t).
\end{align}
\end{proposition}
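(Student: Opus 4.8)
The plan is to verify each claimed property of $F$ and $g$ essentially by unwinding the definition \eqref{eq:Fg} and applying the equivalent characterizations of \eqref{eq:pq} from \Cref{lemma:upper}, \Cref{lemma:lower}, and \Cref{lem:conv}. First I would record the elementary computation of the derivative: by the fundamental theorem of calculus and the chain rule, $F'(t) = \frac{1}{p} t^{1/p - 1} \frac{f(t^{1/p})}{t^{1/p}} = \frac{1}{p} t^{-1} f(t^{1/p})$ for $t > 0$. Hence $t F'(t) = \frac{1}{p} f(t^{1/p})$, which is the key identity linking $F'$ back to $f$. Monotonicity of $F$ is then immediate since $f \geq 0$ (indeed $f$ is increasing with $f(0)=0$, so $f > 0$ on $(0,\infty)$), and the integrand $f(s)/s$ in \eqref{eq:Fg} is nonnegative, so $F$ is increasing. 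For convexity of $F$, I would argue that $F'$ is increasing: writing $F'(t) = \frac{1}{p}t^{-1}f(t^{1/p})$ and substituting $\tau = t^{1/p}$, it suffices that $\tau \mapsto \tau^{-p} f(\tau)$ is increasing, which is exactly \Cref{lemma:lower}(ii) — available because convexity of $f$ with $f(0)=0$ gives \eqref{eq:pq-lower} with $p=1$, but more directly we are assuming $f$ satisfies the full \eqref{eq:pq}, so \eqref{eq:pq-lower} holds with the given $p \geq 1$ and \Cref{lemma:lower} applies.

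Next I would establish \eqref{eq:Fpq}. Using the identity $tF'(t) = \frac{1}{p}f(t^{1/p})$, the claim $F(t) \le tF'(t) \le \frac{q}{p}F(t)$ becomes $F(t) \le \frac{1}{p}f(t^{1/p}) \le \frac{q}{p}F(t)$. To prove the left inequality, I would bound $F(t) = \int_0^{t^{1/p}} \frac{f(s)}{s}\ds$ from above using that $s \mapsto f(s)/s$ is increasing (\Cref{lem:conv}), so the integrand is at most $f(t^{1/p})/t^{1/p}$, giving $F(t) \le t^{1/p} \cdot \frac{f(t^{1/p})}{t^{1/p}} = f(t^{1/p}) = p \cdot tF'(t)$; wait — this gives $F(t) \le p\,tF'(t)$, which is weaker than needed. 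Instead I would use \eqref{eq:pq-lower} in the form $f'(s) \ge p f(s)/s$, or rather integrate more carefully: from \Cref{lemma:lower}(ii), $f(s)/s \le s^{p-1} \cdot f(t^{1/p})/(t^{1/p})^p$ for $s \le t^{1/p}$, so $F(t) \le \frac{f(t^{1/p})}{(t^{1/p})^p}\int_0^{t^{1/p}} s^{p-1}\ds = \frac{f(t^{1/p})}{(t^{1/p})^p}\cdot\frac{(t^{1/p})^p}{p} = \frac{1}{p}f(t^{1/p}) = tF'(t)$. For the right inequality, I would use \Cref{lemma:upper}(ii): $f(s)/s \ge f(s)/s^q \cdot s^{q-1}$ is not quite it — rather, for $s \le t^{1/p}$, decreasing of $u \mapsto u^{-q}f(u)$ gives $f(s)/s^q \ge f(t^{1/p})/(t^{1/p})^q$, hence $f(s)/s \ge s^{q-1}(t^{1/p})^{-q}f(t^{1/p})$, so $F(t) \ge (t^{1/p})^{-q}f(t^{1/p})\int_0^{t^{1/p}} s^{q-1}\ds = \frac{1}{q}f(t^{1/p}) = \frac{p}{q}\,tF'(t)$, i.e.\ $tF'(t) \le \frac{q}{p}F(t)$.

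Finally I would derive the comparability statements \eqref{eq:Fcomp} and \eqref{eq:derFcomp} for $g(t) = F(t^p)$. Differentiating, $g'(t) = p t^{p-1} F'(t^p)$, and combining with $t^p F'(t^p) = \frac{1}{p}f(t)$ (the identity above at argument $t^p$) yields $g'(t) = p t^{p-1}\cdot \frac{1}{p}t^{-p}f(t) = t^{-1}f(t)\cdot\frac{p t^{p-1}}{p t^{p-1}}$ — more cleanly, $tg'(t) = p t^p F'(t^p) = f(t)$. Now \eqref{eq:derFcomp} follows: from \eqref{eq:pq-lower} and \eqref{eq:pq-upper}, $p f(t) \le t f'(t) \le q f(t)$, and substituting $f(t) = tg'(t)$ gives $p\, t g'(t) \le t f'(t) \le q\, t g'(t)$, i.e.\ $\frac{1}{q}f'(t) \le g'(t) \le \frac{1}{p}f'(t)$. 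For \eqref{eq:Fcomp} I would integrate: since $g(0) = F(0) = 0$ and $f(0)=0$, integrating $\frac{1}{q}f'(s) \le g'(s) \le \frac{1}{p}f'(s)$ from $0$ to $t$ gives $\frac{1}{q}f(t) \le g(t) \le \frac{1}{p}f(t)$. Monotonicity of $g$ is clear since $g'(t) = t^{-1}f(t) \ge 0$. Alternatively \eqref{eq:Fcomp} can be obtained directly from \eqref{eq:Fpq}: $g(t) = F(t^p)$, and $\frac{1}{q}f(t) = \frac{1}{q}\cdot t g'(t) = \frac{1}{q}\cdot p t^p F'(t^p) \le F(t^p)$ using $t^p F'(t^p) \le \frac{q}{p}F(t^p)$, and similarly for the upper bound using $F(t^p) \le t^p F'(t^p)$.

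I do not anticipate a serious obstacle here — the entire proposition is a sequence of elementary manipulations once the derivative identity $tF'(t) = \frac{1}{p}f(t^{1/p})$ (equivalently $tg'(t) = f(t)$) is in hand. The one point requiring minor care is getting the sharp constants in \eqref{eq:Fpq}: a naive bound of the integrand by its endpoint value loses a factor of $p$, so one must instead use the power-law comparisons from \Cref{lemma:lower}(ii) and \Cref{lemma:upper}(ii) to integrate $s^{p-1}$ and $s^{q-1}$ exactly. Everything else (convexity of $F$, monotonicity of $F$ and $g$, and the comparability of derivatives) is a direct consequence of the characterizations already established in \Cref{subsec:aux}.
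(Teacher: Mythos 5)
Your proof is correct, and all the key ingredients (the derivative identity $tF'(t) = \frac{1}{p}f(t^{1/p})$, equivalently $tg'(t) = f(t)$, together with the characterizations in \Cref{lemma:upper}, \Cref{lemma:lower}, and \Cref{lem:conv}) match those in the paper. The one place where your route diverges is in how \eqref{eq:Fpq} is established. You derive both bounds by pointwise estimating the integrand $f(s)/s$ using the power-comparison forms of \Cref{lemma:lower}(ii) and \Cref{lemma:upper}(ii) and then integrating $s^{p-1}$ and $s^{q-1}$ exactly — and you correctly notice mid-argument that the naive endpoint bound only gives $F(t) \le p\,tF'(t)$, then fix it. The paper is slightly more structural: it first shows $F$ is convex by computing $F''$ directly from \eqref{eq:pq-lower} (equivalent to your monotone-$F'$ argument via \Cref{lemma:lower}(ii)), then obtains the left inequality $F(t) \le tF'(t)$ abstractly from \Cref{lem:conv} applied to $F$ (a convex function vanishing at zero automatically satisfies $(f_1)$), and obtains the right inequality by proving the scaling estimate $F(\lambda t) \le \lambda^{q/p}F(t)$ for $\lambda \ge 1$ and invoking the equivalence (i)$\Leftrightarrow$(iii) of \Cref{lemma:upper}. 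Both arguments are of comparable length; yours is more computational and self-contained at that step, while the paper's makes explicit that $F$ itself satisfies an $(f_{p'}^{q'})$-type condition with exponents $(1,q/p)$, which is conceptually clean. Your treatment of \eqref{eq:Fcomp} and \eqref{eq:derFcomp} coincides with the paper's.
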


\begin{proof}
First of all, the function $F$ is well-defined by \Cref{lem:conv}. The functions $F$ and $g$ are increasing by definition. Moreover, $F$ is convex since
\begin{equation*}
F''(t) = \frac{1}{p^2} t^{\frac{1}{p}-2}f'(t^{\frac{1}{p}}) -\frac{1}{p}t^{-2}f(t^{\frac{1}{p}}) \ge \frac{1}{p} t^{-2}f(t^{\frac{1}{p}}) - \frac{1}{p}t^{-2}f(t^{\frac{1}{p}}) = 0
\end{equation*}
by \eqref{eq:pq-lower}. Thus, the first inequality in \eqref{eq:Fpq} follows from \Cref{lem:conv} and \Cref{lemma:lower}. The second inequality follows from
\begin{equation*}
F(\lambda t) = \int_{0}^{(\lambda t)^{1/p}} \frac{f(s)}{s} \ds = \int_{0}^{t^{1/p}} \frac{f(\lambda^{1/p}s)}{s} \ds \le \lambda^{q/p}\int_{0}^{t^{1/p}} \frac{f(s)}{s} \ds = \lambda^{q/p} F(t),
\end{equation*}
where we used \Cref{lemma:upper}.
By \eqref{eq:pq} we have
\begin{equation*}
p \frac{f(s)}{s} \le f'(s) \le q \frac{f(s)}{s},
\end{equation*}
and after integrating from $0$ to $t$ and using that $f(0) = 0$, we deduce \eqref{eq:Fcomp}. Finally, we compute
\begin{equation*}
p g'(t) = p\frac{f(t)}{t} \le f'(t) \le q\frac{f(t)}{t} = q g'(t),
\end{equation*}
using \eqref{eq:pq} from where \eqref{eq:derFcomp} follows.
\end{proof}

We close this subsection with two estimates for convex functions. \Cref{lemma:convexlemma} and \Cref{lemma:CaccHelpLemma} are generalizations of \cite[Lemma 4.1 and 4.2]{Coz17}.

\begin{lemma}
\label{lemma:convexlemma}
Let $f : [0,\infty) \to [0,\infty)$ be convex, differentiable and $f(0)=0$. Then, for any $\theta \in [0,1]$ and $a, b \geq 0$:
\begin{equation*}
f(a+b) -f(a) \ge \theta f'(a)b + (1-\theta)f(b). 
\end{equation*}
\end{lemma}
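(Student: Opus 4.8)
The plan is to prove the inequality
\[
f(a+b) - f(a) \ge \theta f'(a)b + (1-\theta) f(b)
\]
by interpolating between the two endpoint cases $\theta = 1$ and $\theta = 0$. Since the right-hand side is affine in $\theta$ and the left-hand side does not depend on $\theta$, it suffices to establish the two extreme inequalities
\[
f(a+b) - f(a) \ge f'(a) b
\qquad\text{and}\qquad
f(a+b) - f(a) \ge f(b),
\]
because then for general $\theta \in [0,1]$ we simply take the convex combination with weights $\theta$ and $1-\theta$ of these two estimates. So the whole proof reduces to verifying these two endpoint inequalities.

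For the first endpoint inequality, I would use convexity of $f$ directly: the graph of a convex differentiable function lies above each of its tangent lines, so $f(y) \ge f(a) + f'(a)(y-a)$ for every $y \ge 0$; evaluating at $y = a+b$ gives $f(a+b) \ge f(a) + f'(a) b$, which is exactly what is wanted. For the second endpoint inequality, I would use superadditivity of $f$, which follows from convexity together with $f(0) = 0$: by \Cref{lem:conv} the map $t \mapsto f(t)/t$ is increasing, hence for $a, b > 0$,
\[
f(a) = (a+b)\frac{a}{a+b}\cdot\frac{f(a)}{a} \cdot \frac{1}{a+b}\cdot(a+b)
\]
is awkward to write cleanly, so instead I would argue: since $f(t)/t$ is increasing, $f(a)/a \le f(a+b)/(a+b)$ and $f(b)/b \le f(a+b)/(a+b)$, whence $f(a) + f(b) \le \frac{a+b}{a+b} f(a+b) = f(a+b)$, i.e. $f(a+b) - f(a) \ge f(b)$; the degenerate cases $a = 0$ or $b = 0$ are immediate from $f(0) = 0$. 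Alternatively one can get superadditivity from \Cref{lem:conv} combined with the tangent line inequality at $0$, but the monotonicity argument is cleanest.

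I do not expect any genuine obstacle here; the only thing to be slightly careful about is handling the boundary values $a = 0$, $b = 0$, and the endpoints $\theta = 0, 1$, and making sure the convex-combination step is spelled out correctly (multiplying the tangent-line inequality by $\theta$, the superadditivity inequality by $1-\theta$, and adding). Everything rests on two standard consequences of convexity with $f(0)=0$ — the tangent line bound and superadditivity (the latter already packaged in \Cref{lem:conv}) — so the argument is short.
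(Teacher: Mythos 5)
Your proof is correct and follows essentially the same route as the paper: establish the endpoint cases $\theta=1$ (tangent-line / monotonicity of $f'$) and $\theta=0$ (superadditivity from $f(0)=0$ and convexity), then interpolate. The only cosmetic difference is that the paper justifies the $\theta=1$ case by integrating $f'$ and using that $f'$ is increasing, whereas you invoke the tangent-line bound directly — these are the same fact about convex differentiable functions.
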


\begin{proof}
The result is clear for $\theta = 0$ by the superadditivity of convex functions with $f(0)=0$. For $\theta = 1$, we compute
\begin{equation*}
f(a+b) - f(a) = \int_a^{a+b} f'(\tau) \,\d \tau \ge f'(a)b,
\end{equation*}
where we used the fact that $t \mapsto f'(t)$ is increasing since $f$ is convex. The result for $\theta \in (0,1)$ follows by interpolation.
\end{proof}

\begin{lemma}
\label{lemma:CaccHelpLemma}
Let $f : [0,\infty) \to [0,\infty)$ be convex and differentiable. Then, for every $\mu \in [0,1]$ and $a, b \ge 0$:
\begin{equation*}
f(\vert \mu a-b\vert) - f(\vert a-b \vert) \le f'(b)a.
\end{equation*}
\end{lemma}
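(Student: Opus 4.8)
The plan is to control $f(\vert \mu a - b\vert)$ by exploiting that, as $\mu$ ranges over $[0,1]$, the point $\mu a$ ranges over the interval $[0,a]$, together with the monotonicity and convexity of $f$ (recall that the growth functions $f$ under consideration are increasing, so that $f' \ge 0$ and, by convexity, $f'$ is non-decreasing). The elementary starting point is that for every $t \in [0,a]$ one has $\vert t - b\vert \le \max\{b,\, \vert a-b\vert\}$, since $t \mapsto \vert t - b\vert$ is convex on $[0,a]$ and hence bounded by its values at the endpoints. Taking $t = \mu a$ and applying the increasing function $f$ gives
\begin{equation*}
f(\vert \mu a - b\vert) \le \max\{ f(b),\, f(\vert a-b\vert) \}.
\end{equation*}

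Next I would distinguish two cases according to which term on the right-hand side is larger. If $f(\vert a-b\vert) \ge f(b)$, then the previous display immediately yields $f(\vert \mu a - b\vert) - f(\vert a-b\vert) \le 0 \le f'(b)a$, which is the claim. If instead $f(b) > f(\vert a-b\vert)$, then monotonicity of $f$ forces $b > \vert a-b\vert$, and hence, using that $f'$ is non-decreasing,
\begin{equation*}
f(b) - f(\vert a-b\vert) = \int_{\vert a-b\vert}^{b} f'(\tau) \,\d \tau \le f'(b)\,\bigl( b - \vert a-b\vert \bigr).
\end{equation*}
Since $\vert a-b\vert \ge b - a$ (this is just $\vert x\vert \ge -x$), we have $b - \vert a-b\vert \le a$, and because $f'(b) \ge 0$ this gives $f(b) - f(\vert a-b\vert) \le f'(b)a$. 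Combining this with $f(\vert \mu a - b\vert) \le f(b)$ from the first display completes the argument in this case as well.

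The proof is entirely elementary, so I do not anticipate a genuine obstacle; the only point requiring a little care is keeping track of the monotonicity of $f$, which is used both to pass from the estimate for $\vert \mu a - b\vert$ to the one for $f(\vert \mu a - b\vert)$ and to guarantee $f'(b) \ge 0$. For the model case $f(t) = t^{p}$ this statement reduces to \cite[Lemma~4.2]{Coz17}, and the argument sketched above keeps the same overall structure while avoiding any use of an explicit formula for $f$.
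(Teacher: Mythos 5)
Your proof is correct. The paper's own argument cases directly on the ordering of $\mu a$, $b$, $a$: for $b \ge a$ and for $\mu a \le b < a$ it writes $f(|\mu a - b|) - f(|a-b|)$ as an integral of $f'$ and bounds it using monotonicity of $f'$, while the third case $b < \mu a$ is dismissed because the difference is already nonpositive. You reorganize the reduction step: the convexity of $t \mapsto |t-b|$ on $[0,a]$ gives $|\mu a - b| \le \max\{b, |a-b|\}$, so after applying the increasing $f$ the dependence on $\mu$ disappears entirely, and only the single comparison $f(b) > f(|a-b|)$ requires the integral estimate $\int_{|a-b|}^{b} f'(\tau)\,\d\tau \le f'(b)\bigl(b - |a-b|\bigr) \le f'(b)a$. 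This is a slightly cleaner route with one fewer case, though the key tool --- bounding the integral of $f'$ by its value at the right endpoint times the interval length --- is the same as in the paper. One small remark applicable to both your argument and the paper's: both tacitly use $f' \ge 0$ (you to conclude $0 \le f'(b)a$ in the trivial case and to pass from $f'(b)(b-|a-b|)$ to $f'(b)a$; the paper at the end of each nontrivial case and in the trivial third case). This goes slightly beyond the literal hypotheses ``convex and differentiable,'' but as you explicitly flag, the lemma is only ever applied to increasing $f$, so $f' \ge 0$ holds throughout.
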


\begin{proof}
Let us first assume that $b \ge a$. Then
\begin{equation*}
f(\vert \mu a-b\vert) - f(\vert a-b \vert) = \int_{b-a}^{b - \mu a} f'(\tau) \,\d \tau \le f'(b-\mu a)a(1-\mu) \le  f'(b)a.
\end{equation*}
If on the other hand $\mu a \le b < a$, then
\begin{equation*}
f(\vert \mu a-b\vert) - f(\vert a-b \vert) = \int_{a-b}^{b - \mu a} f'(\tau) \, \d \tau \le f'(b-\mu a)(2b-(1+\mu)a) \le f'(b)a.
\end{equation*}
It remains to consider the case $b < \mu a$, but then $f(\vert \mu a-b \vert) - f(\vert a-b \vert) \le 0$, so there is nothing to prove.
\end{proof}

%%%%%%%%%%%%%%%%%%%%%%%%%%%%%%%%%%%%%%
\subsection{Functional inequalities}\label{subsec:Funct}
%%%%%%%%%%%%%%%%%%%%%%%%%%%%%%%%%%%%%%

In this section, we collect some well-known functional inequalities which are useful for the application of De Giorgi's methods for nonlocal operators. While the first three results are embeddings for fractional Sobolev spaces, the last proposition is a fractional isoperimetric inequality.

\begin{lemma} \cite[Lemma 4.6]{Coz17} \label{lem:embedding}
Let $0 < \tilde{\sigma} < \sigma < 1$ and $1 \leq \tilde{p} < p$. Let $\Omega' \subset \Omega \subset \Rd$ be two bounded measurable sets, then for any $u \in W^{\sigma, p}(\Omega)$
\begin{equation*}
\left( \int_{\Omega'} \int_{\Omega} \frac{|u(x)-u(y)|^{\tilde{p}}}{|x-y|^{d+\tilde{\sigma}\tilde{p}}} \dy \dx \right)^{1/\tilde{p}} \leq C |\Omega'|^{\frac{p-\tilde{p}}{p\tilde{p}}} \mathrm{diam}(\Omega)^{\sigma-\tilde{\sigma}} \left( \int_{\Omega'} \int_{\Omega} \frac{|u(x)-u(y)|^{p}}{|x-y|^{d+\sigma p}} \dy \dx \right)^{1/p},
\end{equation*}
where
\begin{equation*}
C = \left( \frac{d(p-\tilde{p})}{(\sigma-\tilde{\sigma})p\tilde{p}} |B_1| \right)^{\frac{p-\tilde{p}}{p\tilde{p}}}.
\end{equation*}
\end{lemma}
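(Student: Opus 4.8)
The plan is to obtain the inequality from a single application of Hölder's inequality on the double integral, choosing the exponents so that the ``function part'' reproduces exactly the Gagliardo kernel appearing on the right-hand side, while the leftover kernel remains integrable near the diagonal. Concretely, I would set $r := p/\tilde p > 1$ with conjugate exponent $r' = p/(p-\tilde p)$ and split the integrand as
\begin{equation*}
\frac{|u(x)-u(y)|^{\tilde p}}{|x-y|^{d+\tilde\sigma\tilde p}} = \left( \frac{|u(x)-u(y)|^{p}}{|x-y|^{d+\sigma p}} \right)^{1/r} |x-y|^{-\gamma}, \qquad \gamma := \frac{d(p-\tilde p)}{p} - (\sigma-\tilde\sigma)\tilde p.
\end{equation*}
Hölder's inequality with exponents $r$ and $r'$ over $\Omega' \times \Omega$ then yields
\begin{equation*}
\int_{\Omega'}\int_{\Omega} \frac{|u(x)-u(y)|^{\tilde p}}{|x-y|^{d+\tilde\sigma\tilde p}} \dy \dx \le \left( \int_{\Omega'}\int_{\Omega} \frac{|u(x)-u(y)|^{p}}{|x-y|^{d+\sigma p}} \dy \dx \right)^{\tilde p/p} \left( \int_{\Omega'}\int_{\Omega} |x-y|^{-\beta} \dy \dx \right)^{(p-\tilde p)/p},
\end{equation*}
where $\beta := \gamma r' = d - (\sigma-\tilde\sigma)\tfrac{p\tilde p}{p-\tilde p}$. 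The hypotheses $\tilde\sigma < \sigma$ and $\tilde p < p$ guarantee $\beta < d$, so the second integral converges; the first factor is finite because $u \in W^{\sigma,p}(\Omega)$ and $\Omega' \subset \Omega$.

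It then remains to estimate the kernel integral explicitly. Since $\Omega \subset B_{D}(x)$ for every $x \in \Omega'$, where $D := \mathrm{diam}(\Omega)$, integration in polar coordinates gives
\begin{equation*}
\int_{\Omega} |x-y|^{-\beta} \dy \le \int_{B_D(x)} |x-y|^{-\beta} \dy = d|B_1| \int_0^{D} \rho^{d-1-\beta} \,\d\rho = \frac{d|B_1|}{d-\beta}\, D^{d-\beta},
\end{equation*}
and, using $d-\beta = (\sigma-\tilde\sigma)\tfrac{p\tilde p}{p-\tilde p}$, the prefactor simplifies to $\tfrac{d(p-\tilde p)|B_1|}{(\sigma-\tilde\sigma)p\tilde p}$. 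Integrating over $x \in \Omega'$ contributes a factor $|\Omega'|$; substituting this bound into the Hölder estimate, taking the $\tilde p$-th root, and regrouping the powers of $|\Omega'|$ and $D$ produces precisely the claimed inequality with $C = \left( \tfrac{d(p-\tilde p)}{(\sigma-\tilde\sigma)p\tilde p} |B_1| \right)^{(p-\tilde p)/(p\tilde p)}$.

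There is no genuine difficulty in this argument: the only points requiring care are the exponent bookkeeping — in particular verifying that the kernel left over after Hölder is $|x-y|^{-\beta}$ with $\beta < d$, which is exactly where the strict inequalities $\tilde\sigma<\sigma$ and $\tilde p<p$ enter — and tracking the numerical constant through the radial integral. This is precisely \cite[Lemma 4.6]{Coz17}, which we simply quote.
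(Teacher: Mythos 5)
Your proof is correct and is essentially the argument Cozzi gives for the cited Lemma 4.6: a single Hölder split of the Gagliardo integrand with exponents $p/\tilde p$ and $p/(p-\tilde p)$, followed by a polar-coordinate estimate of the leftover kernel $|x-y|^{-\beta}$ over the enclosing ball of radius $\mathrm{diam}(\Omega)$. The exponent bookkeeping checks out ($\gamma$, $\beta$, $d-\beta$, and the resulting powers of $|\Omega'|$ and $\mathrm{diam}(\Omega)$ all match), and the constant you obtain agrees exactly with the one stated.
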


\begin{theorem} \cite[Corollary 4.9]{Coz17} \label{thm:homo-sobolev}
Let $0 < s_0 \leq s < 1$ and $p \geq 1$ be such that $sp < d$. Let $u \in W^{s, p}_{0}(B_R)$ and assume $u=0$ on a set $\Omega \subset B_R$ with $|\Omega| \geq \gamma|B_R|$ for some $\gamma \in (0,1]$. Then, there exists a constant $C > 0$, depending on $d$, $s_0$, $p$ and $\gamma$, such that
\begin{equation*}
\|u\|_{L^{p^{\ast}}(B_R)}^p \leq C \frac{1-s}{(d-sp)^{p-1}} \int_{B_R} \int_{B_R} \frac{|u(x)-u(y)|^p}{|x-y|^{d+sp}} \dy \dx.
\end{equation*}
\end{theorem}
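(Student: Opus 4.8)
The plan is to derive the inequality from a fractional Sobolev--Poincar\'e inequality on the ball, which is robust as $s\uparrow 1$, together with an elementary argument that removes the mean value by exploiting that $u$ vanishes on the large set $\Omega$. Since both sides scale like $R^{sp-d}$ under $u\mapsto u(R\,\cdot\,)$ while the constant $(1-s)(d-sp)^{-(p-1)}$ carries no power of $R$, I would first reduce to the case $R=1$.

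\emph{Step 1.} I would invoke the robust fractional Sobolev--Poincar\'e inequality on $B_1$,
\[
\|u - (u)_{B_1}\|_{L^{p^\ast}(B_1)}^{p} \le C\,\frac{1-s}{(d-sp)^{p-1}} \int_{B_1}\int_{B_1} \frac{|u(x)-u(y)|^{p}}{|x-y|^{d+sp}} \dy\dx, \qquad (u)_{B_1}:=\fint_{B_1}u,
\]
with $C=C(d,s_0,p)$ staying bounded as $s\uparrow 1$ and with the singular behaviour as $sp\uparrow d$ captured by the factor $(d-sp)^{-(p-1)}$; this is precisely where the hypothesis $s\ge s_0$ is used. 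One may cite this from \cite{Coz17}, or prove it by extending $u-(u)_{B_1}$ from $B_1$ to a compactly supported function on $\Rd$ (reflection across $\partial B_1$ followed by a cutoff), at $s$-uniform cost $\lesssim[u]_{W^{s,p}(B_1)}^{p}+\|u-(u)_{B_1}\|_{L^{p}(B_1)}^{p}$, then applying the sharp fractional Sobolev inequality on $\Rd$ (whose constant is comparable, up to an $s_0$-dependent factor, to $(1-s)(d-sp)^{-(p-1)}$, of Maz'ya--Shaposhnikova / Bourgain--Brezis--Mironescu type), and finally absorbing the $L^{p}$-term via the robust fractional Poincar\'e inequality on $B_1$.

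\emph{Step 2.} Since $u\equiv 0$ on $\Omega$ we have $\fint_\Omega u=0$, so by H\"older's inequality and $|\Omega|\ge\gamma|B_1|$,
\[
|(u)_{B_1}| = \Bigl|\fint_\Omega\bigl(u-(u)_{B_1}\bigr)\Bigr| \le \frac{1}{|\Omega|}\int_{B_1}|u-(u)_{B_1}| \le \frac{1}{\gamma}\,|B_1|^{-1/p^\ast}\,\|u-(u)_{B_1}\|_{L^{p^\ast}(B_1)}.
\]
Multiplying by $|B_1|^{1/p^\ast}$ gives $\|(u)_{B_1}\|_{L^{p^\ast}(B_1)}\le\gamma^{-1}\|u-(u)_{B_1}\|_{L^{p^\ast}(B_1)}$, whence by the triangle inequality $\|u\|_{L^{p^\ast}(B_1)}\le(1+\gamma^{-1})\|u-(u)_{B_1}\|_{L^{p^\ast}(B_1)}$. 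Raising to the power $p$ and inserting Step 1 yields the claim with final constant $(1+\gamma^{-1})^{p}C(d,s_0,p)$, depending only on $d$, $s_0$, $p$, $\gamma$, as required.

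The main obstacle is entirely in Step 1: producing a Sobolev(--Poincar\'e) constant with the correct two-sided behaviour --- uniform for $s$ bounded away from $1$ and degenerating exactly like $(d-sp)^{-(p-1)}$ as $sp\to d$; if it is built via extension and the $\Rd$-Sobolev inequality, the delicate ingredients are a fractional extension operator for $W^{s,p}(B_1)$ whose norm does not blow up as $s\uparrow 1$ and the correct scaling of the auxiliary Poincar\'e inequality (for $s\ge s_0$ a crude $s$-uniform bound is enough, the sharper factor $s$ being needed only near $s=0$, which is excluded). I also note that the naive route of extending $u$ by zero and applying the $\Rd$-Sobolev inequality fails directly, since the leftover tail term $\int_{B_1}|u(x)|^{p}\bigl(\int_{B_1^{c}}|x-y|^{-d-sp}\dy\bigr)\dx$ carries a weight blowing up like $\mathrm{dist}(x,\partial B_1)^{-sp}$, which the interior seminorm alone cannot control (already visible for $u\equiv 1$ on $B_1$ when $sp<1$); this is exactly the difficulty that the hypothesis on the zero set $\Omega$ repairs.
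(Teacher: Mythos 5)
The paper does not prove this statement; it cites it directly from \cite[Corollary 4.9]{Coz17}, so there is no in-paper argument to compare against. Your Step~2 is entirely correct and self-contained: the reduction to $R=1$ by scaling is consistent (both sides scale like $R^{sp-d}$), the H\"older estimate $|(u)_{B_1}| \le \gamma^{-1}|B_1|^{-1/p^\ast}\|u-(u)_{B_1}\|_{L^{p^\ast}(B_1)}$ is computed correctly, and the triangle inequality closes the argument with constant $(1+\gamma^{-1})^p$. Your closing remark explaining why the naive extension-by-zero route fails (the boundary weight $\sim\mathrm{dist}(x,\partial B_1)^{-sp}$) is also accurate and identifies exactly the point that the hypothesis on $\Omega$ is designed to repair.

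The genuine gap is entirely in Step~1, and you yourself flag it as ``the main obstacle.'' The robust Sobolev--Poincar\'e inequality
\[
\|u-(u)_{B_1}\|_{L^{p^\ast}(B_1)}^p \le C\,\frac{1-s}{(d-sp)^{p-1}}\,[u]_{W^{s,p}(B_1)}^p
\]
cannot be obtained from the crude Jensen bound (which gives $\|u-(u)_{B_1}\|_{L^p}^p \lesssim R^{sp}[u]^p$ \emph{without} the crucial $(1-s)$ factor, hence is not robust as $s\uparrow 1$), nor can it be bootstrapped from \Cref{thm:frac-sobolev} alone: if one uses H\"older to bound $\|u-(u)_{B_1}\|_{L^p}$ by $\|u-(u)_{B_1}\|_{L^{p^\ast}}$ and feeds it back, the remainder term comes with an $O(1)$ constant that cannot be absorbed. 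Your extension route is plausible, but it requires two nontrivial ingredients that you only name: (i) a $W^{s,p}(B_1)\to W^{s,p}(\Rd)$ extension operator whose operator norm is uniform in $s\in[s_0,1)$, and (ii) a robust fractional Poincar\'e inequality on $B_1$ carrying the $(1-s)$ factor, which is precisely the same phenomenon as in (i) and is not addressed by the argument presented. Supplying either of these (or citing the relevant intermediate result from \cite{Coz17}, which proceeds along these lines) would complete the proof; as written, Step~1 remains the unfinished core of the argument.
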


\begin{theorem} \cite{DNPV12,BBM02,MaSh02} \label{thm:frac-sobolev}
Let $0 < s_0 \leq s < 1$ and $p \geq 1$ be such that $sp < d$. Let $u \in W^{s, p}(B_R)$. Then, there exists a constant $C > 0$, depending on $d$, $s_0$ and $p$, such that
\begin{equation*}
\|u\|_{L^{p^{\ast}}(B_R)}^p \leq C \frac{1-s}{(d-sp)^{p-1}} \int_{B_R} \int_{B_R} \frac{|u(x)-u(y)|^p}{|x-y|^{d+sp}} \dy \dx + C R^{-sp} \int_{B_R} \Vert u \Vert_{L^p(B_R)}^p.
\end{equation*}
\end{theorem}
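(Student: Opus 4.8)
The plan is to reduce the estimate to the homogeneous Sobolev--Poincar\'e inequality \Cref{thm:homo-sobolev}, by extending $u$ across $\partial B_R$ in a way that stays uniform as $s \to 1^-$. Write $B_R = B_R(x_0)$ and $\bar u = \fint_{B_R} u$. Since
\begin{equation*}
\|\bar u\|_{L^{p^\ast}(B_R)}^p = |\bar u|^p |B_R|^{p/p^\ast} \le |B_R|^{p/p^\ast - 1}\|u\|_{L^p(B_R)}^p = |B_1|^{-sp/d} R^{-sp}\|u\|_{L^p(B_R)}^p \le C(d,p)\, R^{-sp}\|u\|_{L^p(B_R)}^p
\end{equation*}
by Jensen's inequality and $p/p^\ast = 1 - sp/d$, the triangle inequality in $L^{p^\ast}(B_R)$ reduces the claim to the Sobolev--Poincar\'e estimate
\begin{equation*}
\|u - \bar u\|_{L^{p^\ast}(B_R)}^p \le C\,\frac{1-s}{(d-sp)^{p-1}} \int_{B_R}\int_{B_R} \frac{|u(x)-u(y)|^p}{|x-y|^{d+sp}} \dy \dx .
\end{equation*}

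To establish this I would extend $w := u - \bar u$ to a function $v$ on $\Rd$ with $v = w$ on $B_R$ and $\supp v \subset B_{2R}$, taking $v = \eta\,\hat w$, where $\hat w$ is the reflection of $w$ across the sphere $\partial B_R$ (so $\hat w = w$ on $B_R$ and $\hat w(x) = w\big(x_0 + R^2(x-x_0)|x-x_0|^{-2}\big)$ for $x \notin B_R$) and $\eta$ is a Lipschitz cutoff with $\eta \equiv 1$ on $B_R$, $\supp\eta \subset B_{2R}$ and $|\nabla\eta|\le C/R$. The spherical inversion is bi-Lipschitz with $d$-dependent constants on the annulus $B_{2R}\setminus B_R$, so splitting $B_{2R}\times B_{2R}$ into the blocks $B_R\times B_R$, $(B_{2R}\setminus B_R)^2$ and the mixed term, and adding the contribution of $v$ against $B_{2R}^c$, a routine computation gives
\begin{equation*}
\int_{\Rd}\int_{\Rd} \frac{|v(x)-v(y)|^p}{|x-y|^{d+sp}}\dy\dx \le C\left( \int_{B_R}\int_{B_R} \frac{|w(x)-w(y)|^p}{|x-y|^{d+sp}}\dy\dx + \frac{R^{-sp}}{1-s}\,\|w\|_{L^p(B_R)}^p\right)
\end{equation*}
with $C = C(d,s_0,p)$; the factor $(1-s)^{-1}$ here arises only from the transition region of $\eta$.

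The lower-order term is then absorbed using the robust fractional Poincar\'e inequality
\begin{equation*}
\|w\|_{L^p(B_R)}^p \le C(d,p)\,(1-s)\,R^{sp}\int_{B_R}\int_{B_R} \frac{|w(x)-w(y)|^p}{|x-y|^{d+sp}}\dy\dx,
\end{equation*}
the Poincar\'e analogue of the Bourgain--Brezis--Mironescu and Maz'ya--Shaposhnikova estimates, whose $(1-s)$ cancels the $(1-s)^{-1}$; as $w$ and $u$ have the same Gagliardo seminorm, we obtain $\int_{\Rd}\int_{\Rd}|v(x)-v(y)|^p|x-y|^{-d-sp}\dy\dx \le C\int_{B_R}\int_{B_R}|u(x)-u(y)|^p|x-y|^{-d-sp}\dy\dx$ with $C = C(d,s_0,p)$. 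Since $v \in W^{s,p}_0(B_{4R})$ vanishes on $B_{4R}\setminus B_{2R}$, where $|B_{4R}\setminus B_{2R}| = (1-2^{-d})|B_{4R}| \ge \tfrac12|B_{4R}|$, \Cref{thm:homo-sobolev} applied on $B_{4R}$ with $\gamma = 1-2^{-d}$ and the above seminorm bound finally give
\begin{equation*}
\|u - \bar u\|_{L^{p^\ast}(B_R)}^p \le \|v\|_{L^{p^\ast}(B_{4R})}^p \le C\,\frac{1-s}{(d-sp)^{p-1}}\int_{B_R}\int_{B_R}\frac{|u(x)-u(y)|^p}{|x-y|^{d+sp}}\dy\dx ,
\end{equation*}
which combined with the bound for $\bar u$ proves the theorem.

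The real difficulty is the uniformity in $s$: a naive reflection, or a naive Poincar\'e inequality, produces either a $(1-s)^{-1}$ or a $(d-sp)^{-(p-1)}$ factor in front of the term $R^{-sp}\|u\|_{L^p(B_R)}^p$, which would destroy the robust limit $s \to 1^-$ (and hence the robustness of \Cref{thm:minimizer} and \Cref{thm:weaksol}). Controlling all of these constants by $d$, $s_0$ and $p$ alone is precisely where the sharp Bourgain--Brezis--Mironescu type estimates enter, which is why \cite{BBM02,MaSh02} are cited here rather than an off-the-shelf fractional extension theorem.
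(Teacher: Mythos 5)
The paper does not supply a proof of this theorem; it is quoted from \cite{DNPV12,BBM02,MaSh02}, so there is no in-paper argument to compare against. Your reduction is sensible and most of it can be made rigorous: the split $u = (u-\bar u) + \bar u$ and the estimate $\|\bar u\|_{L^{p^\ast}(B_R)}^p \le C R^{-sp}\|u\|_{L^p(B_R)}^p$ are correct, the spherical inversion is indeed bi-Lipschitz on $B_{2R}\setminus B_{R/2}$ with constants depending only on $d$ (e.g.\ one checks $|x-y|\ge\tfrac14|x-\iota(y)|$ for $x\in B_R$, $y\in B_{2R}\setminus B_R$), and the cutoff term does produce exactly one factor $(1-s)^{-1}$, coming from $\int_0^{R} r^{(1-s)p-1}\,\d r = R^{(1-s)p}/((1-s)p)$.

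The step that deserves scrutiny, and that you have effectively outsourced, is the ``robust'' fractional Poincar\'e inequality
\begin{equation*}
\|u-\bar u\|_{L^p(B_R)}^p \le C(d,s_0,p)\,(1-s)\,R^{sp}\int_{B_R}\int_{B_R}\frac{|u(x)-u(y)|^p}{|x-y|^{d+sp}}\dy\dx .
\end{equation*}
The naive Jensen/kernel-insertion argument only gives this with $C(d,p)R^{sp}$ and \emph{without} the factor $(1-s)$, and inserting that factor with an $s$-uniform constant is precisely as deep as the $s$-uniformity you are trying to prove in the theorem itself: indeed, \Cref{thm:frac-sobolev} on a ball trivially implies this Poincar\'e estimate via $L^{p^\ast}\hookrightarrow L^p$. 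So as written your argument is at risk of circularity, and a reader cannot tell from ``Poincar\'e analogue of BBM and Maz'ya--Shaposhnikova'' whether an independent proof is being invoked. There is a non-circular version of your plan: the sharp Poincar\'e can be obtained for instance by first deriving it for functions vanishing on a fixed proportion of the ball directly from \Cref{thm:homo-sobolev} and $L^{p^\ast}(B_R)\hookrightarrow L^p(B_R)$ (this gives the $(1-s)$ factor for free), and then upgrading to the mean-zero statement by a standard median/level-set argument, which costs only a $d,p$-dependent constant. If you make this explicit, the extension-and-cutoff scheme you describe does produce the claimed estimate with a constant depending only on $d$, $s_0$ and $p$; without it, the key quantitative input of the proof is being assumed rather than established.

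One smaller point: with $\supp\eta = B_{2R}$, the far-field integral $\int_{B_{2R}}\int_{B_{2R}^c}|v(y)|^p|x-y|^{-d-sp}\dx\dy$ has $x$ and $y$ arbitrarily close across $\partial B_{2R}$, so you need to use the Lipschitz decay of $\eta$ to control $\eta(y)^p\int_{|x-y|>\mathrm{dist}(y,\partial B_{2R})}|x-y|^{-d-sp}\dx$; it is cleaner to take $\supp\eta\subset B_{3R/2}$, which leaves a genuine gap to $B_{2R}^c$ and makes the far-field term immediate. Either way, this term contributes only $C(d,s_0,p)R^{-sp}\|w\|_{L^p}^p$ with no $(1-s)^{-1}$, so it does not change the structure of the argument.
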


\begin{proposition} \cite[Proposition 5.1]{Coz17} \label{prop:isoperimetric}
Let $p > 1$, $C_0 > 0$ and $\gamma, \gamma_0 \in (0,1)$. Then, there exist constants $\bar{s} \in (0,1)$ and $C > 0$, depending on $d$, $p$, $\gamma$, $\gamma_0$ and $C_0$, such that if $s \in [\bar{s},1)$ and if $u \in W^{s, p}(B_R)$ satisfies
\begin{equation*}
\begin{split}
&|B_R \cap \lbrace u \leq h \rbrace| \geq \gamma|B_R|, \quad |B_R \cap \lbrace u \geq k \rbrace| \geq \gamma_0 |B_R| \quad\text{and} \\
&\|u\|_{L^{p}(B_R)}^p + (1-s)R^{sp} [u]_{W^{s, p}(B_R)}^{p} \leq C_0 R^d(k-h)^p \quad\text{for}~ k > h,
\end{split}
\end{equation*}
then
\begin{equation*}
\begin{split}
&(k-h) \Big( |B_R \cap \lbrace u \leq h \rbrace| |B_R \cap \lbrace u \geq k \rbrace| \Big)^{\frac{d-1}{d}} \\
&\leq CR^{d-2+s}(1-s)^{1/p} [u]_{W^{s, p}(B_R)} |B_R \cap \lbrace h < u < k \rbrace|^{\frac{p-1}{p}}.
\end{split}
\end{equation*}
\end{proposition}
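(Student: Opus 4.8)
The plan is to reduce the inequality — by rescaling to $R=1$ and passing to a truncation of $u$ — to a one‑sided estimate, and then to prove that estimate by interpolating two fractional Sobolev‑type inequalities at an auxiliary order $\sigma$ close to $s$.

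\emph{Reductions.} First I would rescale: replacing $u(x)$ by $u(Rx)$ on $B_1$ turns the quantitative hypothesis into $\|u\|_{L^p(B_1)}^p+(1-s)[u]_{W^{s,p}(B_1)}^p\le C_0(k-h)^p$ and transforms the claimed inequality homogeneously, so one may assume $R=1$. Set $M:=k-h$ and $v:=\min\{(u-h)_+,M\}$. Then $v\equiv0$ on $A:=B_1\cap\{u\le h\}$ (hence $|A|\ge\gamma|B_1|$), $v\equiv M$ on $A_0:=B_1\cap\{u\ge k\}$ (hence $|A_0|\ge\gamma_0|B_1|$), $0<v<M$ exactly on $D:=B_1\cap\{0<v<M\}\subset B_1\cap\{h<u<k\}$, and $[v]_{W^{s,p}(B_1)}\le[u]_{W^{s,p}(B_1)}$. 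Since $|A|\le|B_1|$ and $|D|\le|B_1\cap\{h<u<k\}|$, the full statement follows from
\[
M\,|A_0|^{\frac{d-1}{d}}\le C\,(1-s)^{1/p}\,[v]_{W^{s,p}(B_1)}\,|D|^{\frac{p-1}{p}}.
\]
Here I would take $\bar s$ close to $1$; in particular I expect to need $\bar s\,p>1$ — this is genuinely required, since for $sp<1$ a sharp transition layer of vanishing width provides a counterexample to the statement — and $\bar s>1/2$ for the choice of $\sigma$ below.

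\emph{Interpolation at order $\sigma=2s-1$.} With this $\sigma\in(0,1)$ one has $1-\sigma=2(1-s)$ and $s-\sigma=1-s$. The first ingredient is a robust homogeneous fractional Sobolev--Poincar\'e inequality for functions vanishing on a set of positive measure (the case $p=1$ of \Cref{thm:homo-sobolev}, adapted from $W^{\sigma,1}_0$ to functions that merely vanish on a large subset of $B_1$): since $v\equiv0$ on $A$ and $\|v\|_{L^{d/(d-\sigma)}(B_1)}\ge M|A_0|^{(d-\sigma)/d}$,
\[
M\,|A_0|^{\frac{d-\sigma}{d}}\le\|v\|_{L^{d/(d-\sigma)}(B_1)}\le C(d,\gamma,\bar s)\,(1-\sigma)\,[v]_{W^{\sigma,1}(B_1)}=C\,(1-s)\,[v]_{W^{\sigma,1}(B_1)},
\]
the $(1-\sigma)$ being exactly the factor that keeps the constant uniform as $\sigma\to1$. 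The second ingredient bounds $[v]_{W^{\sigma,1}(B_1)}\le 2J_\sigma+2I$, where $J_\sigma:=\int_D\int_{B_1}|v(x)-v(y)|\,|x-y|^{-d-\sigma}\dy\dx$ and $I:=M\int_A\int_{A_0}|x-y|^{-d-\sigma}\dy\dx$ (using $v\equiv0$ on $A$, $v\equiv M$ on $A_0$, so the Gagliardo integrand is supported on $\{x\in D\}\cup\{y\in D\}\cup(A\times A_0)\cup(A_0\times A)$). For $J_\sigma$, \Cref{lem:embedding} with $\Omega'=D$, $\Omega=B_1$, $\tilde p=1<p$, $\tilde\sigma=\sigma<s$ gives
\[
J_\sigma\le C(d,p)\,(s-\sigma)^{-\frac{p-1}{p}}\,|D|^{\frac{p-1}{p}}\,[v]_{W^{s,p}(B_1)}=C(d,p)\,(1-s)^{-\frac{p-1}{p}}\,|D|^{\frac{p-1}{p}}\,[v]_{W^{s,p}(B_1)};
\]
assuming the same bound for $I$ (see below), multiplying by the first display and using $1-\tfrac{p-1}{p}=\tfrac1p$ yields $M|A_0|^{(d-\sigma)/d}\le C(1-s)^{1/p}[v]_{W^{s,p}(B_1)}|D|^{(p-1)/p}$. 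Finally, replacing $(d-\sigma)/d$ by $(d-1)/d$ introduces only the factor $|A_0|^{(\sigma-1)/d}$, bounded by $C(d,\gamma_0)$ since $|A_0|\ge\gamma_0|B_1|$ and $\sigma\to1$; undoing the rescaling completes the proof.

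\emph{Main obstacle.} The one genuinely delicate point is the $A$--$A_0$ term $I$, to which \Cref{lem:embedding} does not apply because the difference quotient is not supported on $\{x\in D\}\cup\{y\in D\}$; the hard part will be showing $I\le C(1-s)^{-(p-1)/p}|D|^{(p-1)/p}[v]_{W^{s,p}(B_1)}$ (one can split $I$ according to whether $|x-y|$ exceeds a fixed threshold and treat the near‑diagonal part by hand). The key observation is that for $x\in A$, $y\in A_0$ and any $z$ in the ball $Q_{xy}$ of radius $|x-y|/4$ about the midpoint, $|v(x)-v(z)|+|v(z)-v(y)|=M$ since $v(z)\in[0,M]$; hence on at least half of $Q_{xy}$ one of the two terms is $\ge M/2$, and integrating the corresponding part of the order‑$s$ Gagliardo integrand over $Q_{xy}$ produces a local lower bound $\gtrsim M^p|x-y|^{-sp}$. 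A Vitali‑type covering to avoid over‑counting the balls $Q_{xy}$, together with the lower measure bounds on $A$ and $A_0$, then gives the estimate. This is the place where the assumption $\bar s\,p>1$ and the quantitative smallness hypothesis enter decisively (the latter forcing $D$ not to be too thin); the remainder is bookkeeping with the two fractional Sobolev inequalities already at hand.
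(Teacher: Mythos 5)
This proposition is not proved in the present paper: it is quoted verbatim from \cite[Proposition~5.1]{Coz17}, so the only available reference point is Cozzi's original argument. Your reductions (rescaling, passing to the truncation $v=\min\{(u-h)_+,k-h\}$, working at the auxiliary order $\sigma=2s-1$ so that both $1-\sigma$ and $s-\sigma$ are comparable to $1-s$) and the estimate of the diagonal part $J_\sigma$ via \Cref{lem:embedding} and the $W^{\sigma,1}\hookrightarrow L^{d/(d-\sigma)}$ embedding are sound, track the $(1-s)$--dependence correctly, and are consistent with the general shape of Cozzi's proof.

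However, the argument has a genuine gap exactly where you flag it, and the proposed fix does not close it. Your target bound is $I\le C(1-s)^{-(p-1)/p}|D|^{(p-1)/p}[v]_{W^{s,p}(B_1)}$, to be proved via a midpoint chaining estimate together with a Vitali covering. The problem is that for $x\in A$, $y\in A_0$ and $z$ near the midpoint, nothing forces $z\in D$: when $z\in A_0$ the estimate produces exactly the integrand $M^p|x-z|^{-d-\sigma}$ over $A\times A_0$, and likewise with $A$ when $z\in A$. After interchanging the order of integration, the contribution from such $z$ reproduces the quantity $I$ itself multiplied by a dimensional constant of order one; tuning the radius of the intermediate ball does not shrink this constant below $1$, so the term cannot be absorbed. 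In other words, the ``chaining'' step does not localize to $D$, and no amount of Vitali bookkeeping alone will do so. What is missing is the ingredient that actually controls the off-diagonal interaction: the quantitative smallness hypothesis $(1-s)R^{sp}[u]_{W^{s,p}(B_R)}^p\le C_0R^d(k-h)^p$ directly bounds $(1-s)\int_A\int_{A_0}|x-y|^{-d-sp}\dx\dy$ by $C_0$ (since $|u(x)-u(y)|\ge k-h$ on $A\times A_0$), and a near/far splitting of $\int_A\int_{A_0}|x-y|^{-d-\sigma}$ together with this bound shows $(1-s)\,I\lesssim M\,(1-s)^{\beta}$ for some $\beta>0$; one then absorbs this into the left-hand side $M|A_0|^{(d-\sigma)/d}\ge c(\gamma_0,d)M$ provided $s\ge\bar s$. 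That is the actual role of the hypothesis and of the threshold $\bar s$, which depends on $C_0,\gamma,\gamma_0,d,p$ and is not simply the condition $\bar s p>1$ you suggest (though $sp>1$ is indeed necessary, as your step-function remark correctly indicates). As written, your proposal attributes the hypothesis to the wrong mechanism (``forcing $D$ not to be too thin'') and leaves the essential estimate of $I$ unproven.
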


%%%%%%%%%%%%%%%%%%%%%%%%%%%%%%%%%%%%%%
\section{De Giorgi classes}\label{sec:degiorgiclasses}
%%%%%%%%%%%%%%%%%%%%%%%%%%%%%%%%%%%%%%

In this section, we introduce fractional order Orlicz--Sobolev spaces and define fractional De Giorgi classes governed by convex functions having non-standard growth.

Fractional order Orlicz--Sobolev spaces have been introduced in \cite{BoSa19} by Fern\'{a}ndez Bonder and Salort. The authors prove several properties of the spaces including that the fractional order Orlicz--Sobolev space approximates some Orlicz--Sobolev space as the fractional parameter goes to $1$.
For further results concerning fractional Orlicz--Sobolev spaces, we refer the reader to \cite{BaOu20, BaOuTa20, AlCiPi21b, AlCiPi21, DeFeSa21} and the references therein.

Let $f: [0, \infty) \to [0, \infty)$ be a convex increasing function satisfying \eqref{eq:pq-upper} and $f(0)=0$. Let $s \in (0, 1)$ and $\Omega \subset \Rd$ be open. We define the {\it Orlicz} and {\it Orlicz--Sobolev spaces} by
\begin{equation*}
\begin{split}
L^f(\Omega) &= \lbrace u: \Omega \to \R ~\text{measurable}: \Phi_{L^f(\Omega)}(u) < \infty \rbrace, \\
W^{s, f}(\Omega) &= \lbrace u \in L^f(\Omega): \Phi_{W^{s, f}(\Omega)}(u) < \infty \rbrace,\\
V^{s, f}(\Omega|\Rd) &= \lbrace u \in L^f(\Omega): \Phi_{V^{s, f}(\Omega)}(u) < \infty \rbrace,
\end{split}
\end{equation*}
where $\Phi_{L^f(\Omega)}$, $\Phi_{W^{s, f}(\Omega)}$ and $\Phi_{V^{s,f}(\Omega|\Rd)}$ are {\it modulars} defined by
\begin{equation*}
\begin{split}
\Phi_{L^f(\Omega)}(u) &= \int_{\Omega} f(|u(x)|) \dx, \\
\Phi_{W^{s,f}(\Omega)}(u) &= (1-s)\int_{\Omega} \int_{\Omega} f\left( \frac{|u(x)-u(y)|}{|x-y|^s} \right) \frac{\dy \dx}{\vert x-y \vert^{d}} ,\\
\Phi_{V^{s,f}(\Omega|\Rd)}(u) &= (1-s)\iint_{(\Omega^c \times \Omega^c)^c} f\left( \frac{|u(x)-u(y)|}{|x-y|^s} \right) \frac{\dy \dx}{\vert x-y \vert^{d}} .
\end{split}
\end{equation*}
Under more restrictive assumptions on $f$, $L^f(\Omega)$, $W^{s, f}(\Omega)$, and $V^{s, f}(\Omega|\Rd)$ are Banach spaces endowed with the norms 
\begin{equation*}
\begin{split}
\|u\|_{L^f(\Omega)} &= \inf \lbrace \lambda > 0: \Phi_{L^f(\Omega)}(u/\lambda) \leq 1 \rbrace, \\
\|u\|_{W^{s,f}(\Omega)} &= \|u\|_{L^f(\Omega)} + [u]_{W^{s,f}(\Omega)} = \|u\|_{L^f(\Omega)} + \inf \lbrace \lambda > 0: \Phi_{W^{s,f}(\Omega)}(u/\lambda) \leq 1 \rbrace,\\
\|u\|_{V^{s,f}(\Omega|\Rd)} &= \|u\|_{L^f(\Omega)} + [u]_{V^{s,f}(\Omega|\Rd)} = \|u\|_{L^f(\Omega)} + \inf \lbrace \lambda > 0: \Phi_{V^{s,f}(\Omega|\Rd)}(u/\lambda) \leq 1 \rbrace.
\end{split}
\end{equation*}

Let us next define nonlocal tails, which capture the behavior of functions $u\in V^{s, f}(\Omega|\Rd)$ at large scales. For this purpose, we consider a convex and differentiable function $f$ and a generalized inverse function of its derivative $f'$. There are several definitions of generalized inverse functions in the literature, see \cite{EmHo13}, \cite{deLaFo20}, but we use the following definition for a generalized inverse of $f'$:
\begin{equation} \label{eq:inverse}
(f')^{-1}(y) = \inf \lbrace t: f'(t) \geq y \rbrace.
\end{equation}
The advantage of this definition is that \eqref{eq:inverse} enjoys the following properties, which play an important role in the proof of regularity estimates.

\begin{proposition}
Let $f : [0, \infty) \to [0, \infty)$ be a convex and differentiable function. Then
\begin{align}
\label{eq:f-finv}
&(f' \circ (f')^{-1})(y) \geq y \quad \text{for all } y \geq 0,\\
\label{eq:finv-f}
&((f')^{-1} \circ f')(t) \leq t \quad \text{for all } t \geq 0.
\end{align}
\end{proposition}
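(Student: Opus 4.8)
The plan is to derive both inequalities directly from the definition \eqref{eq:inverse} of the generalized inverse together with the fact that $f'$ is nondecreasing (since $f$ is convex) and right-continuous on $(0,\infty)$; more precisely, I will exploit that the set $S_y := \{t \ge 0 : f'(t) \ge y\}$ appearing in \eqref{eq:inverse} is, for each fixed $y \ge 0$, an interval of the form $[(f')^{-1}(y), \infty)$ or $((f')^{-1}(y), \infty)$, and argue that the infimum is in fact attained (so it is the former) by right-continuity of $f'$.

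First I would prove \eqref{eq:f-finv}. Fix $y \ge 0$ and set $\tau = (f')^{-1}(y) = \inf S_y$. If $S_y = \emptyset$ then by convention $\tau = +\infty$ and there is nothing to prove (or one restricts to $y$ in the range of $f'$); otherwise pick a minimizing sequence $t_n \downarrow \tau$ with $t_n \in S_y$, i.e. $f'(t_n) \ge y$ for all $n$. Since $f$ is convex and differentiable, $f'$ is monotone nondecreasing, hence it has a right limit at $\tau$ equal to its value $f'(\tau)$ when $f'$ is genuinely right-continuous; passing to the limit gives $f'(\tau) = \lim_{n} f'(t_n) \ge y$, which is exactly $(f' \circ (f')^{-1})(y) \ge y$. (If one does not want to invoke right-continuity of a derivative of a convex function, one can instead note $f'(t) \ge y$ for every $t > \tau$ and take $t \downarrow \tau$ using monotone convergence of $f'(t)$ to some limit $\ge y$ that is at most $f'(\tau)$.)

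Next I would prove \eqref{eq:finv-f}. Fix $t \ge 0$ and set $y = f'(t)$. Then trivially $t \in S_y$, since $f'(t) \ge y$. Therefore $((f')^{-1} \circ f')(t) = \inf S_y \le t$, which is the claim. This direction requires nothing beyond the definition of infimum.

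The only genuinely delicate point — and hence the main obstacle — is the justification of $f'(\tau) \ge y$ in the first part, which hinges on some one-sided continuity of $f'$ at the point $\tau$. For a convex differentiable $f : [0,\infty) \to [0,\infty)$, the derivative $f'$ is automatically continuous (a convex function that is differentiable everywhere has continuous derivative), so $f'(\tau) = \lim_{t \downarrow \tau} f'(t) \ge y$ follows cleanly; I would state this as the key input. If the paper prefers to avoid even that, the alternative is to argue by contradiction: if $f'(\tau) < y$, continuity of $f'$ would give a neighborhood $[\tau, \tau + \varepsilon)$ on which $f' < y$, forcing $\inf S_y \ge \tau + \varepsilon > \tau$, a contradiction. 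Either way the argument is short, and I would keep the write-up to a few lines, emphasizing continuity (or right-continuity) of $f'$ as the one nontrivial ingredient.
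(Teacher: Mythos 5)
Your argument is correct and follows essentially the same route as the paper: take a minimizing sequence $t_n \to (f')^{-1}(y)$ with $f'(t_n) \geq y$, pass to the limit using continuity of $f'$ (which the paper attributes to Darboux's theorem, and you attribute to the fact that a differentiable convex function has continuous derivative — the same standard fact), and observe that \eqref{eq:finv-f} is immediate from the definition of the infimum. The only difference is cosmetic: you spell out the edge case $S_y = \emptyset$ and discuss right-continuity as a fallback, neither of which the paper bothers with.
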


\begin{proof}
To prove \eqref{eq:f-finv}, let $y \geq 0$ and $t = (f')^{-1}(y)$. Then, there exists a sequence $(t_n)_{n\geq 1}$ such that $t_n \geq t$, $t_n \to t$, and $f'(t_n) \geq y$. Since $f'$ is continuous by Darboux's theorem, we obtain $f'(t) \geq y$ by taking the limit $n \to \infty$. Assertion \eqref{eq:finv-f} is obvious by definition of $(f')^{-1}$.
\end{proof}

We define the nonlocal $f'$-Tail by
\begin{equation}\label{eq:tail}
\mathrm{Tail}_{f'}(u; x_0, R) = R^s (f')^{-1} \left( (1-s)R^s \int_{\Rd \setminus B_R(x_0)} f' \left( \frac{|u(y)|}{|y-x_0|^s} \right) \frac{\d y}{|y-x_0|^{d+s}} \right).
\end{equation}

Note that $\mathrm{Tail}_{f'}$ coincides with the standard tail considered in \cite{DiKuPa14,DiKuPa16,Coz17} in the special case $f(t) \asymp t^p$. Moreover, it is natural in the sense that the following scaling property is satisfied:
\begin{equation*}
\mathrm{Tail}_{f'}(u; x_0, R) = \mathrm{Tail}_{f'(\cdot/R^s)}(u(R\cdot); x_0/R, 1).
\end{equation*}

We claim that the nonlocal $f'$-Tail is well-defined for functions in the fractional Orlicz--Sobolev space $V^{s, f}(\Omega|\Rd)$. To this end, we define the {\it Legendre transform} $f^{\ast}: [0, \infty) \to [0, \infty)$ by $f^{\ast}(s) = \sup_{t \in [0, \infty)} (st-f(t))$. It is well known that
\begin{equation}
\label{eq:legendre}
(f^{\ast}(f'))(t) = f'(t) t-f(t)
\end{equation}
and that
\begin{equation}
\label{eq:Fenchel}
st \le f(t) + f^{\ast}(s), \quad \text{for all } t,s \ge 0.
\end{equation}
Inequality \eqref{eq:Fenchel} is called {\it Fenchel's inequality}.

\begin{proposition} \label{prop:tail-finite}
Let $q >1$, $s \in (0,1)$ and $\Omega \subset \Rd$ be open. Let $f: [0, \infty) \to [0, \infty)$ be a convex increasing function satisfying \eqref{eq:pq-upper} and $f(0) = 0$. If $u \in V^{s, f}(\Omega|\Rd)$ and $B_R(x_0) \subset \Omega$, then $\mathrm{Tail}_{f'}(u; x_0, R) < \infty$.
\end{proposition}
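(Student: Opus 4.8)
The plan is to show that the argument of $(f')^{-1}$ in the definition \eqref{eq:tail} of $\mathrm{Tail}_{f'}(u;x_0,R)$ is a finite number; since $(f')^{-1}$ takes finite values on $[0,\infty)$ (as $f$ is increasing and differentiable, $f'$ is not eventually $0$, so the infimum in \eqref{eq:inverse} is over a nonempty set — here one should note $q>1$ forces $f' \to \infty$, e.g. via \eqref{eq:pq-growth}), it then follows that $R^s(f')^{-1}(\cdots) < \infty$. So the whole task reduces to proving
\begin{equation*}
(1-s)\int_{\Rd \setminus B_R(x_0)} f'\!\left( \frac{|u(y)|}{|y-x_0|^s} \right) \frac{\d y}{|y-x_0|^{d+s}} < \infty .
\end{equation*}

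First I would split the integral into the "near" region $B_{2R}(x_0)\setminus B_R(x_0)$ and the "far" region $\Rd \setminus B_{2R}(x_0)$. On the far region, for $y$ with $|y-x_0|\ge 2R$ one has $|u(y)|/|y-x_0|^s \le |u(y)|/(2R)^s$, and I would like to dominate $f'$ of this quantity by a constant times $f$ of (a comparable quantity) plus an integrable weight, so that the convergence of $\Phi_{V^{s,f}(\Omega|\Rd)}(u) < \infty$ (which holds since $u \in V^{s,f}(\Omega|\Rd)$) can be used. The natural tool is Fenchel's inequality \eqref{eq:Fenchel} together with the Legendre-transform identity \eqref{eq:legendre}: writing $a(y) = |u(y)|/|y-x_0|^s$ and applying \eqref{eq:Fenchel} with $s = f'(a(y))$, $t = a(y)$ gives $f'(a(y)) a(y) \le f(a(y)) + f^{\ast}(f'(a(y))) = 2f'(a(y))a(y) - 2f(a(y))$... rather, the cleaner route is: by \eqref{eq:legendre} and \eqref{eq:pq-upper},
\begin{equation*}
f^{\ast}(f'(t)) = t f'(t) - f(t) \le q f(t) - f(t) = (q-1) f(t),
\end{equation*}
so $f^{\ast} \circ f'$ is controlled by $f$. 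Then, since the modular integrand in $\Phi_{V^{s,f}(\Omega|\Rd)}$ involves $f(|u(x)-u(y)|/|x-y|^s)$ with $x$ ranging over $\Omega$, I would fix a reference point (or average over a ball $B_r(x_0)$ where $u \in L^f$) to convert $f(|u(y)|/|y-x_0|^s)$-type quantities into the genuine seminorm integrand, using the triangle inequality inside $f$ together with the doubling bounds \eqref{eq:der-subadd} for $f'$ (or \eqref{eq:pq-growth}/\Cref{lemma:upper} for $f$). The geometric weight $|y-x_0|^{-d-s}$ is integrable over $\Rd\setminus B_{2R}(x_0)$ and, up to constants depending on $R$ and $\mathrm{dist}(B_R(x_0),\partial\Omega)$, is comparable to $|x-y|^{-d-s}$ for $x$ in a fixed small ball around $x_0$, which lets me absorb everything into $\Phi_{V^{s,f}(\Omega|\Rd)}(u) + \|u\|_{L^f(B_r(x_0))}$-type finite quantities.

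On the near region $B_{2R}(x_0)\setminus B_R(x_0)$, the weight $|y-x_0|^{-d-s}$ is bounded (by $R^{-d-s}$), and there $|u(y)|/|y-x_0|^s \le |u(y)|/R^s$, so it suffices to know $f'(|u|/R^s) \in L^1(B_{2R}(x_0))$; this follows from $u \in L^f(B_{2R}(x_0)) \subset L^f(\Omega)$ together with $f^{\ast}\circ f' \le (q-1) f$ and Fenchel's inequality, giving $f'(c|u|) \cdot c|u| \le f(c|u|) + (q-1)f(c|u|) = q f(c|u|) \in L^1$, hence $f'(c|u|) \in L^1$ on the set where $|u|$ is bounded below and $\in L^\infty$-locally estimate elsewhere — more simply, $f'(c|u|) \le f'(c|u|) + f'(1)$ and on $\{|u|\le 1\}$ it is bounded while on $\{|u|>1\}$ it is $\le f'(c|u|)|u| \lesssim f(c|u|)$, all integrable.

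The main obstacle I anticipate is the bookkeeping in the far-region estimate: one must pass from $f\big(|u(y)|/|y-x_0|^s\big)$ (which does not literally appear in any modular) to the double-integral modular $\Phi_{V^{s,f}(\Omega|\Rd)}(u)$, and this requires inserting an intermediate point $x\in B_r(x_0)$, splitting $f$ of a sum via \eqref{eq:pq-growth} and \Cref{lemma:upper} (at the cost of $2^q$-type constants), comparing the weights $|y-x_0|^{-d-s}$ and $|x-y|^{-d-s}$, and then integrating out $x$ over $B_r(x_0)$ using $u\in L^f(B_r(x_0))$. None of the individual steps is deep, but keeping track of which finite quantity each term is absorbed into — and ensuring the constants depend only on the allowed data — is where care is needed. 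I expect the final bound to read, schematically, $\mathrm{Tail}_{f'}(u;x_0,R) \le C(d,q,s,R,\mathrm{dist})\,\big(\Phi_{V^{s,f}(\Omega|\Rd)}(u) + \Phi_{L^f(B_r(x_0))}(u) + 1\big) < \infty$.
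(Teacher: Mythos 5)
Your proposal follows essentially the same route as the paper: reduce to finiteness of $\int_{\Rd\setminus B_R(x_0)} f'\bigl(|u(y)|/|y-x_0|^s\bigr)\,|y-x_0|^{-d-s}\,\d y$, convert $f'$ into $f$ via Fenchel's inequality \eqref{eq:Fenchel} and $f^{\ast}\circ f'\le(q-1)f$ (from \eqref{eq:legendre} and \eqref{eq:pq-upper}), and then dominate $f\bigl(|u(y)|/|y-x_0|^s\bigr)$ by inserting an intermediate $x\in B_R(x_0)$, splitting $f$ of a sum with the doubling bounds from \Cref{lemma:upper}/\Cref{lemma:lower}, and absorbing into the finite modulars $\Phi_{L^f(\Omega)}(u)$ and $\Phi_{V^{s,f}(\Omega|\Rd)}(u)$. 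Two small points: the paper does not need your near/far decomposition, treating all of $B_R(x_0)^c$ at once; and your parenthetical claim that $q>1$ forces $f'\to\infty$ via \eqref{eq:pq-growth} is not justified as stated, since the lower bound in \eqref{eq:pq-growth} comes from \eqref{eq:pq-lower}, which is not hypothesized in this proposition, though this is tangential and the paper also leaves the finiteness of $(f')^{-1}$ at the resulting argument implicit.
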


\begin{proof}
Let $u \in V^{s, f}(\Omega|\Rd)$ and $B_R(x_0) \subset \Omega$. We may assume that $x_0 = 0$. In order to prove finiteness of the tail, it is sufficient to show that 
\begin{equation}\label{eq:eqintegrandtail}
  \int_{\Rd \setminus B_R} f' \left( \frac{|u(y)|}{|y|^s} \right) \frac{\d y}{|y|^{d+s}} < \infty.
\end{equation}
Since $B_R \subset\Omega$ and $|x-y|\leq 2|y|$ for $x\in B_{R}$, $y\in  B_{R}^c$, we have
\begin{equation}\label{eq:Vtotail}
\begin{aligned}
\infty &> \Phi_{L^{f}(\Omega)}(u) + \Phi_{V^{s, f}(\Omega|\Rd)}(u) \\
 &\geq C\int_{B_R} f(|u(x)|) \dx +C (1-s)\iint_{B_R\times B_R^c} f\left( \frac{|u(x)-u(y)|}{|y|^s} \right) \frac{\d y \dx}{|y|^{d}}
 \end{aligned}
\end{equation}
by using \eqref{eq:pq-upper}. Our aim is to estimate the right-hand side of \eqref{eq:Vtotail} from below by the expression in \eqref{eq:eqintegrandtail}. 
Note that by \Cref{lem:conv}, \Cref{lemma:lower} and \eqref{eq:pq-upper}: $ f(|u(x)|)(|y|^{-s} \wedge |y|^{-sq} ) \ge f(|y|^{-s}|u(x)|)$. Therefore,
\begin{equation*}
 f(|u(x)|)(|y|^{-s} \wedge |y|^{-sq} )+f\left( \frac{|u(x)-u(y)|}{|y|^s} \right) \geq C f\left( \frac{|u(x)|+|u(x)-u(y)|}{|y|^s} \right) \geq C f\left( \frac{|u(y)|}{|y|^s}\right),
\end{equation*}
where we also used $f(t + s) \le 2^q(f(t) + f(s))$. This is a direct consequence of monotonicity and \eqref{eq:pq-upper}.
Since $\int_{B_R^c} |y|^{-d}(|y|^{-s} \wedge |y|^{-sq}) \dy < \infty$, we estimate the right-hand side of \eqref{eq:Vtotail} by
\begin{align*}
 & \iint_{B_R\times B_R^c} f(|u(x)|) \frac{\d y\dx}{|y|^{d} (|y|^{s} \vee |y|^{sq})} + (1-s)\iint_{B_R\times B_R^c} f\left( \frac{|u(x)-u(y)|}{|y|^s} \right) \frac{\d y \dx}{|y|^{d}} \\
 & \geq C \iint_{B_R\times B_R^c} f\left( \frac{|u(y)|}{|y|^s} \right) \frac{\d y \dx}{|y|^{d}} \geq C\int_{B_R^c} f\left( \frac{|u(y)|}{|y|^s} \right) \frac{ \dy}{|y|^{d}}.
\end{align*}

Altogether,
\begin{equation}\label{eq:Vtotail2}
\int_{B_R^c} f\left( \frac{|u(y)|}{|y|^s} \right) \frac{ \dy}{|y|^{d}} < \infty.
\end{equation}
Note that we have $(f^{\ast}(f'))(t) \le (q-1) f(t)$ by \eqref{eq:legendre} and \eqref{eq:pq-upper}. Therefore, we obtain
\begin{align*}
 \int_{B_R^c}  f'\left( \frac{|u(y)|}{|y|^s} \right) \frac{\d y}{|y|^{d+s}} &\leq \int_{B_R^c} f^{\ast}\left(f'\left( \frac{|u(y)|}{|y|^s} \right)\right) \frac{\d y}{|y|^{d}} + \int_{B_R^c} f\left( \frac{1}{|y|^s} \right) \frac{\d y}{|y|^{d}}\\
 &\leq C\int_{B_R^c} f\left( \frac{|u(y)|}{|y|^s} \right) \frac{\d y}{|y|^{d}} + C,
\end{align*}
where we used Fenchel's inequality \eqref{eq:Fenchel}. Combining the previous estimate with \eqref{eq:Vtotail2} finishes the proof.
\end{proof}

Having defined the fractional order Orlicz--Sobolev spaces, we are ready to introduce De Giorgi classes that are suitable to our setting.

\begin{definition} \label{def:DG}
Let $q > 1$, $c > 0$, $s \in (0,1)$, and let $\Omega$ be open. Let $f : [0,\infty) \mapsto [0,\infty)$ be convex and differentiable with $f(0)=0$, $f(1) = 1$. We say that $u \in G_+(\Omega; q, c, s, f)$ if $u \in V^{s,f}(\Omega|\Rd)$ and if for every $x_0 \in \Omega$, $0 < r < R \le d(x_0, \partial \Omega)$, $k \in \R$, it holds
\begin{equation}
\label{eq:Caccestimate}
\begin{split}
&\Phi_{W^{s, f}(B_r(x_0))}(w_+) + (1-s) \int_{B_{r}(x_0)}\int_{A_{k}^{-}} f'\left( \frac{w_{-}(y)}{\vert x-y \vert^s} \right) \frac{w_{+}(x)}{\vert x-y \vert^s} \frac{\dy \dx}{\vert x-y \vert^{d}} \\
&\le c\left(\frac{R}{R-r}\right)^{q} \Phi_{L^f(B_R(x_0))}\left( \frac{w_{+}}{R^s} \right) \\
&\quad +c(1-s) \left(\frac{R}{R-r} \right)^{d+sq} \Vert w_+ \Vert_{L^1(B_{R})} \int_{\Rd \setminus B_{r}(x_0)} f' \left( \frac{w_+(y)}{\vert y-x_0 \vert^s} \right) \frac{\dy}{\vert y-x_0 \vert^{d+s}},
\end{split}
\end{equation}
where $w_{\pm}(x) = (u(x)- k)_{\pm}$ and $A_{k}^{-} = \lbrace y \in \Rd: u(y) < k \rbrace$. We say that $u \in G_-(\Omega; q, c, s, f)$ if \eqref{eq:Caccestimate} holds true with $w_+$, $w_-$ and $A_{k}^{-}$ replaced by $w_-$, $w_+$ and $A_{k}^{+} = \lbrace y \in \Rd: u(y) > k \rbrace$, respectively. Moreover, we denote by $G(\Omega; q, c, s, f) = G_+(\Omega; q, c, s, f) \cap G_-(\Omega; q, c, s, f)$.
\end{definition}

The De Giorgi classes under consideration will contain minimizers of \eqref{eq:nonlocalfunctional} and weak solutions for \eqref{eq:nonlocaloperator} under suitable additional assumptions on $f$, see \Cref{thm:DG-minimizer} and \Cref{thm:CaccSol}.

The following proposition allows us to infer H\"older regularity of minimizers of $\mathcal{I}_f$ from regularity of functions in $G(\Omega; q, c, s, F(\cdot^p))$, where $F$ is as in \Cref{prop:F}.

\begin{proposition} \label{prop:DG-F}
Let $1 < p \leq q$, $s \in (0,1)$ and $\Omega \subset \R^d$ be open. Let $f : [0,\infty) \mapsto [0,\infty)$ be convex and increasing with \eqref{eq:pq}. Then for every $c_1 > 0$, there exists $c_2 = c_2(c_1, p, q) > 0$ such that $G_{\pm}(\Omega; q, c_1, s, f) \subset G_{\pm}(\Omega; q, c_2, s, F(\cdot^p))$.
\end{proposition}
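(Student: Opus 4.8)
The strategy is to compare the defining inequality \eqref{eq:Caccestimate} for $u$ with the growth function $f$ against the same inequality with the growth function $g := F(\cdot^p)$. Since $g$ is (a constant multiple of) $f$ on the level of function values, and $g'$ is comparable to $f'$ on the level of derivatives, all four terms appearing in \eqref{eq:Caccestimate} transfer from $f$ to $g$ up to multiplicative constants depending only on $p$ and $q$. Concretely, I would fix $x_0 \in \Omega$, $0 < r < R \le d(x_0,\partial\Omega)$, $k \in \R$, set $w_\pm = (u-k)_\pm$, and bound each piece separately using \Cref{prop:F}.

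First I would handle the left-hand side. By \eqref{eq:Fcomp} one has $g(t) = F(t^p) \ge \frac1q f(t)$, so $\Phi_{W^{s,f}(B_r(x_0))}(w_+) \ge q^{-1}\Phi_{W^{s,f}(B_r(x_0))}(w_+)$ — wait, more precisely $\Phi_{W^{s,g}} \le \frac1p \Phi_{W^{s,f}}$ and $\Phi_{W^{s,g}} \ge \frac1q \Phi_{W^{s,f}}$, pointwise in the integrand — hence $\Phi_{W^{s,g}(B_r(x_0))}(w_+) \le \frac1p \Phi_{W^{s,f}(B_r(x_0))}(w_+)$, which is controlled by the left-hand side of \eqref{eq:Caccestimate} for $f$. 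For the second term on the left, \eqref{eq:derFcomp} gives $g'(t) \le \frac1p f'(t)$, so the nonlinear tail-type integral for $g$ is at most $\frac1p$ times the one for $f$. Thus the full left-hand side of \eqref{eq:Caccestimate} written with $g$ is bounded by (a constant times) the left-hand side written with $f$.

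Next I would bound the right-hand side of \eqref{eq:Caccestimate} for $f$ by the right-hand side for $g$ up to constants: for the first term, $f(t) \le q\,g(t)$ by \eqref{eq:Fcomp}, so $\Phi_{L^f(B_R(x_0))}(w_+/R^s) \le q\,\Phi_{L^g(B_R(x_0))}(w_+/R^s)$; for the nonlocal term, $f'(t) \le q\,g'(t)$ by \eqref{eq:derFcomp}, so the tail integral with $f'$ is at most $q$ times the one with $g'$. Chaining these: the $g$-version of the left-hand side $\le \frac1p \cdot (\text{$f$-LHS of \eqref{eq:Caccestimate}}) \le \frac1p c_1 \cdot (\text{$f$-RHS of \eqref{eq:Caccestimate}}) \le \frac{q}{p} c_1 \cdot (\text{$g$-RHS})$, so $c_2 = \frac{q}{p} c_1$ works (the $L^1$-norm $\|w_+\|_{L^1(B_R)}$ and the geometric factors $(R/(R-r))^\bullet$ are untouched since they do not involve $f$). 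The same computation applies verbatim with $w_+$, $w_-$, $A_k^-$ interchanged, giving the statement for $G_-$ as well, hence for both signs.

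The only point requiring a little care — and the one I'd flag as the main (minor) obstacle — is checking that $g = F(\cdot^p)$ is of the form to which \Cref{def:DG} applies, i.e. that $g$ is convex, differentiable, with $g(0)=0$ and $g(1)=1$, and that $q$ is a legitimate growth exponent for it. Convexity and differentiability of $g$ follow from \Cref{prop:F} (note $g = F \circ (\cdot^p)$ with $F$ convex increasing $C^1$ and $t \mapsto t^p$ convex increasing for $p \ge 1$, so the composition is convex), $g(0) = F(0) = 0$ is immediate, and $g(1) = F(1)$; the normalization $g(1)=1$ is not literally true but is irrelevant because \Cref{def:DG} only uses $f(1)=1$ to fix scaling, and one may simply rescale $g$ (equivalently absorb the constant $F(1) \in [\frac1q, \frac1p]$, which lies in a $p,q$-dependent range by \eqref{eq:Fcomp} applied at $t=1$, into $c_2$). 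Finally the exponent condition: by \eqref{eq:Fpq}, $t\mapsto t^{-q/p}F(t)$ is decreasing, hence $g(t) = F(t^p)$ satisfies $t g'(t) = p\,t^p F'(t^p) \le q\,F(t^p) = q\,g(t)$, so $g$ indeed satisfies \eqref{eq:pq-upper} with exponent $q$ — consistent with writing $G_\pm(\Omega; q, c_2, s, F(\cdot^p))$. With these bookkeeping points settled, the inclusion follows.
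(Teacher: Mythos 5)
Your proof is correct and follows essentially the same route as the paper's, which consists of exactly the observation you make: transfer each term of \eqref{eq:Caccestimate} between $f$ and $g=F(\cdot^p)$ using the two-sided comparisons \eqref{eq:Fcomp} and \eqref{eq:derFcomp}, giving $c_2 = (q/p)\,c_1$. Your closing remarks on convexity of $g$, the normalization $g(1)=F(1)\in[1/q,1/p]$ (which the paper silently absorbs), and the verification that $g$ satisfies \eqref{eq:pq-upper} are legitimate bookkeeping that the paper omits; they are correct and do not affect the conclusion.
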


\begin{proof}
The proof follows directly from \eqref{eq:Fcomp} and \eqref{eq:derFcomp}.
\end{proof}

%%%%%%%%%%%%%%%%%%%%%%%%%%%%%%%%%%%%%%
\section{H\"older estimate} \label{sec:hoelder}
%%%%%%%%%%%%%%%%%%%%%%%%%%%%%%%%%%%%%%

In this section, we prove H\"older estimates for functions in the De Giorgi class $G(\Omega; q, c, s, g)$, where $g$ is the function given by \eqref{eq:Fg}. Let us first prove a growth lemma for functions in $G_-(\Omega; q, c, s, g)$.

\begin{theorem} \label{thm:growth}
Let $1<p\leq q$, $c, H > 0$, $R>0$, $s_0\in(0,1)$ and assume $s \in [s_0,1)$. Let $f: [0, \infty) \to [0, \infty)$ be a convex increasing function satisfying \eqref{eq:pq}. Suppose that $B_{4R} = B_{4R}(x_0) \subset \Omega$. Let $u \in G_-(\Omega; q, c, s, g)$ satisfy $0 \leq u \leq 2H$ in $B_{4R}$ and
\begin{equation} \label{eq:growth-measure}
|B_{2R} \cap \lbrace u \geq H \rbrace| \geq \gamma |B_{2R}|
\end{equation}
for some $\gamma \in (0,1)$. There exists $\delta \in (0,1)$ such that if
\begin{equation} \label{eq:growth-tail}
\mathrm{Tail}_{f'}(u_-; x_0, 4R) \leq \delta H,
\end{equation}
then
\begin{equation} \label{eq:growth}
u \geq \delta H \quad\text{in}~ B_{R}.
\end{equation}
The constant $\delta$ depends only on $d$, $s_0$, $p$, $q$, $c$ and $\gamma$.
\end{theorem}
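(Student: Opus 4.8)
The plan is to run De Giorgi's iteration on the sublevel sets of $u$ in shrinking balls centered at $x_0$, using the Caccioppoli-type inequality \eqref{eq:Caccestimate} for $u \in G_-(\Omega; q, c, s, g)$ together with the fractional Sobolev embedding (\Cref{thm:frac-sobolev}). Set $k_j = \delta H (1 + 2^{-j})$ for $j \ge 0$, so that $k_j \downarrow \delta H$; this is a sequence of levels decreasing from $2\delta H$ to $\delta H$. Let $\rho_j = R(1 + 2^{-j})$ be the corresponding radii decreasing from $2R$ to $R$, and let $B^j = B_{\rho_j}(x_0)$. Writing $w_j = (k_j - u)_+$ (the positive part for the class $G_-$, i.e.\ the excess below the level $k_j$) and $a_j = |B^j \cap \{u < k_j\}| / |B^j|$, the goal is to show that if $a_0$ is small enough — which will be arranged through the measure hypothesis \eqref{eq:growth-measure} and the tail smallness \eqref{eq:growth-tail} — then $a_j \to 0$, which forces $u \ge \delta H$ a.e.\ in $B_R$.

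The key steps, in order: (1) Apply \eqref{eq:Caccestimate} with level $k_j$ on the annular pair $\rho_{j+1} < \rho_j$. The left-hand side controls $\Phi_{W^{s,g}(B^{j+1})}(w_j)$, hence by $g(t) \gtrsim t^p$ (which follows from \eqref{eq:pq-lower} via \Cref{prop:F} and $g(t) = F(t^p)$, giving $g(t) \ge c_p t^p$ since $F$ satisfies \eqref{eq:Fpq}) it controls a fractional $W^{s,p}$-seminorm of $w_j$. The right-hand side has two pieces: the local modular term $\Phi_{L^g(B^j)}(w_j/\rho_j^s)$, which is bounded by $C 2^{(d+sq)j} (\delta H)^p \rho_j^{-sp} a_j |B^j|$-type quantities after using $g(t) \lesssim t^p + t^q$ and $w_j \le k_j \le 2\delta H$; and the tail term, which by \eqref{eq:growth-tail} and the definition \eqref{eq:tail} of $\mathrm{Tail}_{f'}$, together with the comparability $g' \asymp f'$ from \eqref{eq:derFcomp}, is bounded by a constant times $(\delta H)^p$ times a geometric factor — here one uses that $w_j = (k_j - u)_+ \le (k_j + u_-) $ on $B_R^c$ and estimates the tail of $u_-$. (2) Feed the resulting bound $[w_j]_{W^{s,p}(B^{j+1})}^p + \|w_j\|_{L^p}^p \lesssim 2^{Cj}(\delta H)^p \rho^{-sp} |B^j| a_j$ into \Cref{thm:frac-sobolev} to get $\|w_j\|_{L^{p^*}(B^{j+1})}^p \lesssim 2^{Cj}(\delta H)^p \rho^{-sp}|B^j|a_j$, uniformly in $s \in [s_0,1)$ because of the robust $(1-s)$ weighting. (3) Since $w_{j+1} \ge (k_j - k_{j+1}) = \delta H 2^{-(j+1)}$ on $B^{j+1} \cap \{u < k_{j+1}\}$, Chebyshev/Hölder gives $a_{j+1} |B^{j+1}| \lesssim (\delta H 2^{-(j+1)})^{-p^*} \|w_j\|_{L^{p^*}}^{p^*}$, and combining with step (2) yields a recursion $a_{j+1} \le C b^j a_j^{1+\varepsilon}$ with $\varepsilon = sp/d \ge s_0 p/d > 0$ and some $b > 1$, all constants depending only on $d, s_0, p, q, c, \gamma$. (4) Invoke the standard fast-geometric-convergence lemma: if $a_0 \le C^{-1/\varepsilon} b^{-1/\varepsilon^2}$ then $a_j \to 0$.

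The final point is to secure the smallness of $a_0 = |B_{2R} \cap \{u < 2\delta H\}|/|B_{2R}|$. This is where \eqref{eq:growth-measure} enters only indirectly: the direct input is \eqref{eq:growth-tail}, but to start the iteration we actually need a second Caccioppoli/De Giorgi step — essentially a measure-shrinking lemma showing that $|B_{2R} \cap \{u < 2\delta H\}|$ can be made an arbitrarily small fraction of $|B_{2R}|$ once $\delta$ is taken small, using that on the complementary large set $\{u \ge H\}$ (of measure $\ge \gamma|B_{2R}|$) the function is bounded above by $2H$. I would run the same Caccioppoli-Sobolev machinery at the single scale between $B_{2R}$ and $B_{3R}$ (or rather between geometrically placed balls inside $B_{4R}$, which is why the hypothesis is stated on $B_{4R}$) with levels $\ell_i = \delta H(1+2^{-i})$ ranging down from $2\delta H$, to show the distribution function at level $\asymp \delta H$ is small; alternatively one directly proves $\int (2\delta H - u)_+^p \lesssim (\delta H)^p \cdot(\text{small})$ using \eqref{eq:growth-measure} to control the Poincaré constant on the set where $u$ is large. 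I expect the main obstacle to be the bookkeeping of the tail term through all of this: one must verify that every occurrence of the nonlocal tail of $w_j$ (which lives at all scales, not just near $x_0$) is dominated by $\mathrm{Tail}_{f'}(u_-; x_0, 4R)$ plus a controllable multiple of $\delta H$, uniformly in $j$ and robustly as $s \to 1^-$ — this requires carefully exploiting the monotonicity/doubling inequalities \eqref{eq:derivativedoubling1}–\eqref{eq:der-subadd} for $f'$ and the comparison $g' \asymp f'$, and tracking how the annulus factors $(\rho_j/(\rho_j - \rho_{j+1}))^{d+sq} \sim 2^{(d+sq)j}$ interact with the geometric gain $a_j^{sp/d}$ from Sobolev.
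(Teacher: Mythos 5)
Your step (1) contains a genuine gap: you assert that $g(t)\gtrsim t^p$ follows from $g(t)=F(t^p)$ and \eqref{eq:Fpq}, but this is false for small $t$. From \eqref{eq:Fpq} and $F(0)=0$ one only gets $F(\lambda\tau)\geq\lambda^{q/p}F(\tau)$ for $\lambda\leq 1$, hence $g(t)\geq t^q\,g(1)$ for $t\leq 1$; the matching lower bound of order $t^p$ holds only for $t\geq 1$. A concrete example is $f(t)=t^q$, which satisfies \eqref{eq:pq} for any $p\leq q$ yet gives $g(t)=\tfrac1q t^q$. The non-degeneracy assumption \eqref{eq:non-degeneracy} that would rescue $g(t)\gtrsim t^p$ is \emph{not} assumed in the H\"older part of the theorem, so the passage from $\Phi_{W^{s,g}(B^{j+1})}(w_j)$ to a genuine $W^{s,p}$-seminorm is not available and the iteration as written does not close. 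The paper avoids this precisely by \emph{not} trying to linearize $g$: it applies the fractional Sobolev inequalities (\Cref{thm:homo-sobolev}, \Cref{lem:embedding}) to $w_-$ at lower orders $\tilde\sigma<\sigma<s$ to produce a normalized average $\fint_{B_\rho\times B_\rho}\frac{|w_-(x)-w_-(y)|^p}{|x-y|^{sp}}\,\d\mu$, then applies Jensen's inequality with the convex function $F$ so that the Caccioppoli modular $\Phi_{W^{s,g}}(w_-)$ appears directly on the right, and finally inverts $F$ via \Cref{lem:g-inv}. Without this ``Jensen--then--invert'' device the recursion $a_{j+1}\leq Cb^ja_j^{1+\varepsilon}$ you want cannot be derived from the De Giorgi class $G_-(\Omega;q,c,s,g)$.

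The second issue is the measure-shrinking step that you leave vague. Securing $a_0\leq\gamma_0$ from the hypothesis \eqref{eq:growth-measure} is not a minor footnote: one must pass from ``$u\geq H$ on a fixed fraction'' to ``$u<2\delta H$ on an arbitrarily small fraction'' as $\delta\downarrow0$, and a single Caccioppoli step (or a Poincar\'e-type bound) does not give the needed logarithmic decay. The paper proves this by an isoperimetric iteration: for $s\geq\bar s$ it applies \Cref{prop:isoperimetric} on a dyadic chain of levels $k_i=2^{-i}H$, $i=1,\dots,l-2$ with $2^{-l}\asymp\delta$, sums the resulting inequalities, and obtains $|B_{2R}\cap\{u\leq 2\delta H\}|\leq C|B_{2R}|\,|\log\delta|^{-\frac{d}{d-1}\frac{p-1}{p}}$; for $s<\bar s$ it uses the cross term in the Caccioppoli inequality (the $f'(w_-)w_+$ term) directly to force a contradiction $\delta\leq C\delta^p$. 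Neither of your two suggested alternatives is shown to produce the requisite $o(1)$ decay as $\delta\to0$, so this step needs a genuinely new ingredient in your write-up.
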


\begin{proof}
Within the proof we use $C>0$ to denote a constant depending on $d$, $s_0$, $p$, $q$, $c$ and $\gamma$ and whose value might change from line to line. We may assume that $x_0 = 0$.

Let us assume
\begin{equation} \label{eq:growth-measure0}
|B_{2R} \cap \lbrace u < 2\delta H \rbrace| \leq \gamma_0 |B_{2R}|
\end{equation}
for some $\gamma_0 \in (0, 2^{-d-1}]$. We first prove the assertion of the lemma under the assumption \eqref{eq:growth-measure0} and then verify \eqref{eq:growth-measure0} using \eqref{eq:growth-measure}.

Let $0 < \tilde{\sigma}:=\max\lbrace s_0/4, 2s-1 \rbrace < \sigma:= \max\lbrace s_0/2, (3s-1)/2 \rbrace < s$. Then, we have
\begin{equation} \label{eq:sigma}
1-\tilde{\sigma} \leq 2(1-s), \quad 1-\sigma \leq \frac{3}{2}(1-s) \quad\text{and}\quad \sigma-\tilde{\sigma} \geq C(1-s)
\end{equation}
for some $C = C(s_0) > 0$. Indeed, for the last inequality in \eqref{eq:sigma}, we observe that
\begin{equation*}
\begin{split}
s_0/4 \geq 2s-1 &\implies \sigma-\tilde{\sigma} \geq \frac{s_0}{4} \geq \frac{s_0}{4(1-s_0)} (1-s) \quad\text{and}\\
s_0/4 < 2s-1 &\implies \sigma-\tilde{\sigma} \geq \frac{3s-1}{2} - (2s-1) = \frac{1}{2}(1-s).
\end{split}
\end{equation*}

Let $\delta \in (0, 1/8)$ to be determined later. Let $\delta H \leq h < k \leq H$, $R \leq \rho < \tau \leq 2R$ and define $w_\pm = (u-k)_\pm$, $A^-_{k,R} = \{ x \in B_R : u(x) < k \} = \supp(w_-) \cap B_R$. By \eqref{eq:growth-measure0} we have
\begin{equation*}
\begin{split}
|B_{\rho} \cap \lbrace w_- = 0 \rbrace|
&\geq |B_{\rho}| - |B_{2R} \cap\{u<2\delta H\}| \geq |B_{\rho}| - \gamma_0 |B_{2R}| \\
&= \left(1-\gamma_0 \left(\frac{2R}{\rho}\right)^d\right)|B_{\rho}| \geq \frac{1}{2}|B_{\rho}|.
\end{split}
\end{equation*}
Thus, we apply \Cref{thm:homo-sobolev} to $w$ to obtain
\begin{equation*}
(k-h) |A_{h, \rho}^-|^{\frac{d-\tilde{\sigma}}{d}} \leq \left(\int_{B_{\rho}} w_-(x)^{\frac{d}{d-\tilde{\sigma}}} \, \d x  \right)^{\frac{d-\tilde{\sigma}}{d}} \leq C(1-\tilde{\sigma}) \int_{B_{\rho}}\int_{B_{\rho}} \frac{|w_-(x) - w_-(y)|}{|x-y|^{d+\tilde{\sigma}}} \dy \dx.
\end{equation*}
Moreover, by \eqref{eq:sigma} and \Cref{lem:embedding} we have
\begin{equation*}
\begin{split}
(k-h) |A_{h, \rho}^-|^{\frac{d-\tilde{\sigma}}{d}} 
&\leq C(1-s) \int_{A_{k, \rho}^-}\int_{B_{\rho}} \frac{|w_-(x)-w_-(y)|}{|x-y|^{d+\tilde{\sigma}}} \dy \dx \\
&\leq C \rho^{\sigma-\tilde{\sigma}} |A_{k, \tau}^-|^{\frac{p-1}{p}} \left((1-s)\int_{B_\rho} \int_{B_\rho} \frac{|w_-(x)-w_-(y)|^p}{|x-y|^{d+\sigma p}} \dy \dx \right)^{1/p},
\end{split}
\end{equation*}
or equivalently,
\begin{equation*}
\frac{(k-h)^p |A_{h, \rho}^-|^{\frac{d-\tilde{\sigma}}{d}p}}{C|A_{k,\tau}^-|^{p-1} \rho^{(\sigma-\tilde{\sigma})p} \mu(B_\rho \times B_\rho)} \leq \fint_{B_\rho \times B_\rho} \frac{|w_-(x) - w_-(y)|^p}{|x-y|^{sp}} \mu(\d X),
\end{equation*}
where $\mu(\d X) = (1-s) |x-y|^{-d+(s-\sigma)p} \dy \dx$. Since $F$ is increasing and convex, Jensen's inequality yields
\begin{equation} \label{eq:applyF}
\begin{split}
F\left( \frac{(k-h)^p |A_{h, \rho}^-|^{\frac{d-\tilde{\sigma}}{d}p}}{C|A_{k,\tau}^-|^{p-1} \rho^{(\sigma-\tilde{\sigma})p} \mu(B_\rho \times B_\rho)} \right)
&\leq \fint_{B_\rho \times B_\rho} F\left( \frac{|w_-(x)-w_-(y)|^p}{|x-y|^{sp}} \right) \mu(\d X) \\
&= \frac{C \rho^{(s-\sigma)p}}{\mu(B_\rho \times B_\rho)} \Phi_{W^{s, g}(B_\rho)}(w_-).
\end{split}
\end{equation}

By definition of $G_-(\Omega; q, c, s, g)$, we have
\begin{equation*}
\begin{split}
&\Phi_{W^{s, g}(B_\rho)}(w_-) + (1-s) \int_{B_\rho} \int_{A_k^+} g' \left( \frac{w_+(y)}{|x-y|^s} \right) \frac{w_-(x)}{|x-y|^s} \frac{\dy \dx}{\vert x-y \vert^{d}}  \\
&\leq c \left(\frac{\tau}{\tau-\rho}\right)^{q} \Phi_{L^{g}(B_\tau)} \left( \frac{w_-}{\tau^s} \right) + c(1-s) \left(\frac{\tau}{\tau - \rho}\right)^{d+sq} \Vert w_- \Vert_{L^1(B_{\tau})} \int_{\Rd \setminus B_\rho} g'\left( \frac{w_-(y)}{|y|^s} \right) \frac{\d y}{|y|^{d+s}},
\end{split}
\end{equation*}
where $A_{k}^{+} = \lbrace y \in \Rd: u(y) < k \rbrace$. Let us estimate the right-hand side of this inequality. Using the assumptions that $u \geq 0$ in $B_{4R}$ and the fact that $F$ is increasing, we estimate $\|w_-\|_{L^1(B_\tau)} \leq C |A_{k,\tau}^-| k$ and
\begin{equation*}
\Phi_{L^{g}(B_\tau)} \left( \frac{w_-}{\tau^s} \right) \leq C|A_{k,\tau}^-| F\left(\left( \frac{k}{R^s}\right)^p \right).
\end{equation*}
Moreover, using \eqref{eq:derFcomp} and \eqref{eq:der-subadd}, we obtain
\begin{equation*}
(1-s)\int_{\Rd \setminus B_\rho} g'\left( \frac{w_-(y)}{|y|^s} \right) \frac{\d y}{|y|^{d+s}} \leq C(1-s) \int_{\Rd \setminus B_{\rho}} \left( f'\left(\frac{k}{R^s}\right) + f'\left(\frac{u_-(y)}{\vert y \vert^s}\right) \right) \frac{\d y}{|y|^{d+s}}.
\end{equation*}
The first term is controlled by $CR^{-s} f'(k/R^s)$. For the second term, we use \eqref{eq:growth-tail}, \eqref{eq:f-finv} and $\delta H < k$ to obtain
\begin{equation} \label{eq:assumption-g}
C(1-s) \int_{\Rd \setminus B_\rho} f' \left( \frac{u_-(y)}{|y|^s} \right) \frac{\d y}{|y|^{d+s}} \leq \frac{C}{R^s} f' \left( \frac{\delta H}{R^s} \right) \leq \frac{C}{R^s} f' \left( \frac{k}{R^s} \right).
\end{equation}
Furthermore, we have
\begin{equation*}
\frac{1}{R^s} f'\left( \frac{k}{R^s} \right) \leq \frac{C}{k}f\left( \frac{k}{R^s} \right) \le \frac{C}{k} F\left( \left( \frac{k}{R^s} \right)^p \right)
\end{equation*}
by \eqref{eq:pq-upper} and \eqref{eq:Fcomp}. Therefore, we have estimated
\begin{equation}  \label{eq:Caccio-F}
\begin{split}
\Phi_{W^{s, g}(B_\rho)}(w_-) + (1-s) \int_{B_\rho} \int_{A_{k}^{+}} g' \left( \frac{w_+(y)}{|x-y|^s} \right) \frac{w_-(x)}{|x-y|^s} \frac{\dy \dx}{\vert x-y \vert^{d}}  \\
\leq C \left(\frac{\tau}{\tau-\rho}\right)^{d+q} |A_{k,\tau}^-| F \left( \left( \frac{k}{R^s} \right)^p \right).
\end{split}
\end{equation}

Combining \eqref{eq:applyF} and \eqref{eq:Caccio-F}, we can find a constant $C>0$, depending on $d$, $s_0$, $p$, $q$, $c$ and $\gamma$, such that
\begin{equation*}
F\left( \frac{(k-h)^p |A_{h, \rho}^-|^{\frac{d-\tilde{\sigma}}{d}p}}{C|A_{k,\tau}^-|^{p-1} \rho^{(\sigma-\tilde{\sigma})p} \mu(B_\rho \times B_\rho)} \right) \leq \frac{C \rho^{(s-\sigma)p} |A_{k,\tau}^-|}{\mu(B_\rho \times B_\rho)}\left(\frac{\tau}{\tau-\rho}\right)^{d+q} F \left( \left( \frac{k}{R^s} \right)^p \right).
\end{equation*}
Using \Cref{lem:g-inv}, we deduce that
\begin{equation*}
(k-h) |A_{h, \rho}^-|^{\frac{d-\tilde{\sigma}}{d}} \leq C \left(\frac{\tau}{\tau-\rho}\right)^{\frac{d+q}{p}} \frac{k}{R^{\tilde{\sigma}}} |A_{k,\tau}^-|.
\end{equation*}
We iterate this inequality with $k = k_j$, $h = k_{j+1}$, $\rho = R_{j+1}$, and $\tau = R_j$, where
\begin{equation*}
R_j = (1+2^{-j})R \quad\text{and}\quad k_j = (1+2^{-j}) \delta H
\end{equation*}
for $j \in \mathbb{N} \cup \lbrace 0 \rbrace$. Let $y_j = |A_{k_{j}, R_{j}}^{-}| / |B_{R_j}|$, then
\begin{equation*}
\frac{\delta H}{2^{j+1}} \left( y_{j+1} |B_{R_{j+1}}| \right)^{\frac{d-\tilde{\sigma}}{d}} \leq C 2^{\frac{d+q}{p} j} \frac{\delta H}{R^{\tilde{\sigma}}} y_j |B_{R_j}|.
\end{equation*}
In other words, we have $y_{j+1} \leq C b^j y_j^{d/(d-\tilde{\sigma})} \leq C b^j \max \lbrace y_j^{1+\beta_1}, y_j^{1+\beta_2} \rbrace$, where
\begin{equation*}
\beta_1 = \frac{1}{d-1}, \quad \beta_2 = \frac{s_0/4}{d-s_0/4}, \quad\text{and}\quad b = 2^{(\frac{d+q}{p}+1)\frac{d}{d-1}}.
\end{equation*}
Thus, $y_j \to 0$ as $j \to \infty$, provided that $y_0 \leq \min\lbrace C^{-1/\beta_2} b^{-1/\beta_2^2}, C^{-1/\beta_1} b^{-1/\beta_1^2} \rbrace$. See \cite[Lemma 4.4]{ChKi21}. By taking $\gamma_0 \leq \min\lbrace C^{-1/\beta_2} b^{-1/\beta_2^2}, C^{-1/\beta_1}  b^{-1/\beta_1^2}\rbrace$, we conclude \eqref{eq:growth} from \eqref{eq:growth-measure0}.

Let us next prove \eqref{eq:growth-measure0} by contradiction using the assumption \eqref{eq:growth-measure}. Suppose that \eqref{eq:growth-measure0} does not hold, i.e.,
\begin{equation} \label{eq:contrad}
|B_{2R} \cap \lbrace u < 2\delta H \rbrace| > \gamma_0 |B_{2R}|.
\end{equation}
Let $\bar{s}$ be the constant given in \Cref{prop:isoperimetric}. We distinguish two cases $s \in [\bar{s}, 1)$ and $s \in (0, \bar{s})$. For the first case, we let $l$ be the unique integer such that $2^{-l-1} \leq \delta < 2^{-l}$ and set $k_i = 2^{-i}H$ for $i=0, 1, \dots, l-1$. To apply \Cref{prop:isoperimetric} to $(u-k_{i-1})_-$ with $h = k_{i-1} - k_i$ and $k = k_{i-1} - k_{i+1}$, we check the following conditions: By \eqref{eq:growth-measure} and \eqref{eq:contrad}
\begin{equation} \label{eq:levelset-h}
|B_{2R} \cap \lbrace (u-k_{i-1})_- \leq h \rbrace| = |B_{2R} \cap \lbrace u \geq k_i \rbrace| \geq |B_{2R} \cap \lbrace u \geq H \rbrace| \geq \gamma |B_{2R}|
\end{equation}
and
\begin{equation*}
|B_{2R} \cap \lbrace (u-k_{i-1})_- \geq k \rbrace| = |B_{2R} \cap \lbrace u \leq k_{i+1} \rbrace| \geq |B_{2R} \cap \lbrace u < 2\delta H \rbrace| > \gamma_0 |B_{2R}|
\end{equation*}
for $i=1, \dots, l-2$. 
Moreover, we prove that there is a constant $C>0$ such that
\begin{equation*}
\|(u-k_{i-1})_-\|_{L^p(B_R)}^p + (1-\sigma) R^{\sigma p} [(u-k_{i-1})_-]_{W^{\sigma, p}(B_R)}^p \leq CR^d (k_i-k_{i+1})^p.
\end{equation*}
Indeed, it follows from $u \geq 0$ in $B_{4R}$ that
\begin{equation*}
\|(u-k_{i-1})_-\|_{L^p(B_R)}^p \leq C R^d k_{i-1}^p.
\end{equation*}
The estimate
\begin{equation} \label{eq:seminorm-sigma-p}
(1-\sigma)R^{\sigma p} [(u-k_{i-1})_-]_{W^{\sigma, p}(B_R)}^p \leq CR^d k_{i-1}^p
\end{equation}
follows from \eqref{eq:sigma} and the computation
\begin{equation}
\begin{split}
&F\left(\fint_{B_R \times B_R} \frac{\vert (u(x) - k_{i-1})_- - (u(y)-k_{i-1})_- \vert^p}{\vert x-y \vert^{sp}} \d \mu(X) \right)\\
&\le \frac{R^{(s-\sigma)p}}{\mu(B_R \times B_R)} (1-s) \int_{B_R} \int_{B_R} F\left( \frac{\vert (u(x) - k_{i-1})_- - (u(y)-k_{i-1})_- \vert^p}{\vert x-y \vert^{sp}} \right) \frac{\d y \dx}{\vert x-y \vert^{d}} \\
&\le C\frac{R^{(s-\sigma)p}}{\mu(B_R \times B_R)}\vert B_{2R} \cap \{ u \le k_{i-1} \}\vert F\left(\left( \frac{k_{i-1}}{R^s} \right)^p \right),
\end{split}
\end{equation}
which can be obtained along the lines of the first part of this proof. Estimating $\vert B_{2R} \cap \{ u \le k_{i-1} \}\vert \le CR^d$ and applying \Cref{lem:g-inv}, we obtain \eqref{eq:seminorm-sigma-p}, as desired.

Therefore, by applying \Cref{prop:isoperimetric} and using \eqref{eq:levelset-h} we have
\begin{equation*}
(k_i - k_{i+1})^p |B_{2R} \cap \lbrace u \leq k_{i+1} \rbrace|^{\frac{d-1}{d}p} \leq C(1-\sigma) R^{(-1+\sigma)p} [(u-k_{i-1})_-]^p_{W^{\sigma, p}(B_{2R})} |D_i|^{p-1},
\end{equation*}
where $D_i = B_{2R} \cap \lbrace h \leq (u-k_{i-1})_- < k \rbrace = B_{2R} \cap \lbrace k_{i+1} < u \leq k_i \rbrace$. Using \eqref{eq:seminorm-sigma-p} and $k_{i+1} \geq 2\delta H$, we obtain
\begin{equation*}
|B_{2R} \cap \lbrace u \leq 2\delta H \rbrace|^{\frac{d-1}{d} \frac{p}{p-1}} \leq C R^{\frac{d-p}{p-1}} |D_i|.
\end{equation*}
We sum up this inequality over $i=1, \dots, l-2$ to derive
\begin{equation*}
(l-2)|B_{2R} \cap \lbrace u \leq 2\delta H \rbrace|^{\frac{d-1}{d} \frac{p}{p-1}} \leq C R^{\frac{d-p}{p-1}} R^d,
\end{equation*}
from which we conclude by definition of $l$
\begin{equation*}
|B_{2R} \cap \lbrace u \leq  2\delta H \rbrace| \leq C |B_{2R}| |\log \delta|^{-\frac{d}{d-1} \frac{p-1}{p}}.
\end{equation*}
Therefore, we arrive at a contradiction by using \eqref{eq:contrad} and taking $\delta$ sufficiently small. The proof for the case $s \in [\bar{s}, 1)$ is finished. 

For the case $s \in (0, \bar{s})$, we use the estimate \eqref{eq:Caccio-F} with $k=4\delta H$ to obtain
\begin{equation*}
\begin{split}
&CR^d F \left( \left( \frac{4\delta H}{R^s} \right)^p \right) \geq (1-\bar{s}) \int_{B_{2R}} \int_{A_{4\delta H, 2R}^{+}} g'\left( \frac{w_+(y)}{|x-y|^s} \right) \frac{w_-(x)}{|x-y|^s} \frac{\dy \dx}{|x-y|^d} \\
&\geq \frac{C}{R^{d+s}} \left( \int_{B_{2R} \cap \lbrace u \geq H \rbrace} g'\left( \frac{u(y)-4\delta H}{(2R)^s} \right) \dy \right) \left( \int_{B_{2R} \cap \lbrace u < 2\delta H \rbrace} (4\delta H - u(x)) \dx \right) \\
&\geq \frac{C}{R^{d}} 2\delta \frac{H}{4R^s} g'\left( \frac{H}{4R^s} \right) |B_{2R} \cap \lbrace u \geq H \rbrace| |B_{2R} \cap \lbrace u < 2\delta H \rbrace|.
\end{split}
\end{equation*}
By \eqref{eq:growth-measure} and \eqref{eq:contrad}, we obtain
\begin{equation*}
\delta F\left( \left( \frac{H}{4R^s} \right)^p \right) \leq C F \left( \left( \frac{4\delta H}{R^s} \right)^p \right),
\end{equation*}
where we also used \eqref{eq:derFcomp}, \eqref{eq:pq-lower} and \eqref{eq:Fcomp}. Therefore, by \Cref{lem:g-inv} we obtain $\delta \leq C \delta^p$.  Since $p > 1$, we arrive at a contradiction by taking $\delta$ sufficiently small.
\end{proof}

Using \Cref{thm:growth}, we prove H\"older estimates for functions in $G(\Omega; q, c, s, g)$.

\begin{theorem}
\label{thm:Fholder}
Let $1<p\leq q$, $c > 0$, $s_0 \in (0,1)$, and assume $s \in [s_0,1)$. Let $f: [0, \infty) \to [0, \infty)$ be a convex increasing function satisfying \eqref{eq:pq}. Then, there exist $\alpha \in (0,1)$ and $C > 0$, depending on $d$, $s_0$, $p$, $q$ and $c$, such that for every $u \in G(\Omega; q, c, s, g)$ and any $B_{8R}(x_0) \subset \Omega$,
\begin{equation*}
R^\alpha [u]_{C^{\alpha}(\overline{B_R(x_0)})} \leq C \|u\|_{L^{\infty}(B_{4R}(x_0))} + \mathrm{Tail}_{f'}(u; x_0, 4R).
\end{equation*}
\end{theorem}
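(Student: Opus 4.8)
The plan is to derive the H\"older estimate from an oscillation decay statement obtained by iterating the growth lemma \Cref{thm:growth} along a geometric sequence of balls, with the shrinking ratio chosen small enough to keep the nonlocal tail under control at every step. Fix $x_1 \in B_R(x_0)$. Using the doubling properties \eqref{eq:derivativedoubling1}--\eqref{eq:der-subadd} of $f'$ together with the elementary consequence $(f')^{-1}(\lambda t) \le (q/p)^{1/(p-1)}\lambda^{1/(p-1)}(f')^{-1}(t)$ for $\lambda \ge 1$ of \eqref{eq:f-finv}, I would first prove a re-centering bound
\[
\mathrm{Tail}_{f'}(u; x_1, R) \le C\big(\|u\|_{L^\infty(B_{4R}(x_0))} + \mathrm{Tail}_{f'}(u; x_0, 4R)\big),
\]
by splitting the integral in \eqref{eq:tail} into the part over $B_{4R}(x_0)\setminus B_R(x_1)$, where $|u|$ is controlled by $\|u\|_{L^\infty(B_{4R}(x_0))}$, and the part over $\Rd\setminus B_{4R}(x_0)$, where $|y-x_1|\asymp|y-x_0|$. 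I then set $\omega_0 := C_\ast\big(\|u\|_{L^\infty(B_{4R}(x_0))} + \mathrm{Tail}_{f'}(u; x_0, 4R)\big)$ with $C_\ast$ large enough that $\omega_0 \ge 2\|u\|_{L^\infty(B_R(x_1))} + 2\,\mathrm{Tail}_{f'}(u; x_1, R)$, fix a ratio $\sigma\in(0,1/4)$ and an exponent $\alpha\in(0,s_0/2]$ to be chosen below, and put $R_j = \sigma^j R$ and $\omega_j = \omega_0\sigma^{j\alpha}$.

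Next I construct by induction on $j$ nondecreasing numbers $m_j$ and nonincreasing numbers $M_j$ with $M_j - m_j = \omega_j$ and $m_j \le u \le M_j$ in $B_{R_j}(x_1)$; the case $j=0$ holds by the choice of $\omega_0$. For the step $j\to j+1$, one of the sets $B_{2R_{j+1}}(x_1)\cap\{u\ge (M_j+m_j)/2\}$ and $B_{2R_{j+1}}(x_1)\cap\{u\le (M_j+m_j)/2\}$ has measure at least $\tfrac12|B_{2R_{j+1}}|$. In the first case I apply \Cref{thm:growth} with $\gamma=\tfrac12$ to $v:=u-m_j$, which lies in $G_-(\Omega;q,c,s,g)$ because $u$ does and $G_-$ is invariant under additive constants; here $H=\omega_j/2$, so $0\le v\le 2H$ in $B_{4R_{j+1}}(x_1)$ and the measure condition holds, and once the tail condition $\mathrm{Tail}_{f'}(v_-;x_1,4R_{j+1})\le\delta\omega_j/2$ is verified (see below) the lemma yields $v\ge\delta\omega_j/2$ in $B_{R_{j+1}}(x_1)$, so I put $m_{j+1}=m_j+\delta\omega_j/2$, $M_{j+1}=M_j$. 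In the second case I apply \Cref{thm:growth} to $w:=M_j-u$, which lies in $G_-(\Omega;q,c,s,g)$ because $u\in G_+(\Omega;q,c,s,g)$ (a direct check shows $M_j-u\in G_-$ if and only if $u\in G_+$), and put $M_{j+1}=M_j-\delta\omega_j/2$, $m_{j+1}=m_j$. In both cases $\omega_{j+1}=(1-\delta/2)\omega_j$, so the induction closes provided $\sigma^\alpha\ge 1-\delta/2$.

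The main obstacle is the verification of $\mathrm{Tail}_{f'}(v_-;x_1,4R_{j+1})\le\delta\omega_j/2$ (and its analogue for $w$) in the inductive step. The decisive facts are that $v_-=(u-m_j)_-$ \emph{vanishes on} $B_{R_j}(x_1)$ and that on the annulus $B_{R_i}(x_1)\setminus B_{R_{i+1}}(x_1)$ one has $v_-\le m_j-m_i\le\omega_i$. Splitting the integral defining the tail into these annuli for $i=0,\dots,j-1$ plus a far part over $\Rd\setminus B_R(x_1)$, and using the doubling inequalities for $f'$, I bound each annular contribution by a fixed multiple of the innermost one ($i=j-1$), with geometric decay governed by a positive power of $\sigma$; here $\alpha<s$ is used, so that the slopes $\omega_iR_i^{-s}$ increase towards the innermost annulus. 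The innermost contribution and the far part (the latter controlled via $\mathrm{Tail}_{f'}(u;x_1,R)\le\omega_0/2$ together with the properties \eqref{eq:f-finv} and \eqref{eq:finv-f} of $(f')^{-1}$) are each at most $f'\big(\delta\omega_j/(2(4R_{j+1})^s)\big)$ once $\sigma$ is chosen small, which by the definition \eqref{eq:inverse} of $(f')^{-1}$ is exactly the desired bound on $\mathrm{Tail}_{f'}(v_-;x_1,4R_{j+1})$. Accordingly one first requires $\alpha\le s_0/2$, then chooses $\sigma$ small depending only on $d,s_0,p,q,c$, and finally sets $\alpha:=\min\{s_0/2,\ \log(1/(1-\delta/2))/\log(1/\sigma)\}$, which also guarantees $\sigma^\alpha\ge 1-\delta/2$.

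Finally, iterating the induction gives $\mathrm{osc}_{B_\rho(x_1)}u\le C\sigma^{-\alpha}\omega_0(\rho/R)^\alpha$ for all $0<\rho\le R$ and all $x_1\in B_R(x_0)$. Distinguishing the cases $|x-y|\le R$ and $|x-y|>R$ and using $\|u\|_{L^\infty(B_R(x_0))}\le\|u\|_{L^\infty(B_{4R}(x_0))}$, this yields $R^\alpha[u]_{C^\alpha(\overline{B_R(x_0)})}\le C\big(\|u\|_{L^\infty(B_{4R}(x_0))}+\mathrm{Tail}_{f'}(u;x_0,4R)\big)$, which is the claimed estimate.
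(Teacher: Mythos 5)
Your proof is correct and follows essentially the same route as the paper's: iterate the growth lemma (\Cref{thm:growth}) to obtain oscillation decay along shrinking balls, verifying the tail hypothesis at each step by splitting the tail of $v_-$ into a near part controlled through the inductive oscillation bounds and a far part controlled through the global tail of $u$, with the doubling estimates \eqref{eq:derivativedoubling1}--\eqref{eq:der-subadd} and the properties \eqref{eq:f-finv}--\eqref{eq:finv-f} of the generalized inverse doing the work in both arguments. The only differences are bookkeeping: you shrink by a small ratio $\sigma$ chosen to absorb both tail contributions uniformly in $j$, whereas the paper fixes the ratio $1/4$ and instead renders the first $j_0$ steps trivial by enlarging $C_0$; and you carry out explicitly the re-centering from $x_0$ to an arbitrary $x_1 \in B_R(x_0)$ (together with the needed comparison of tails at the two centers), which the paper leaves to a ``standard argument.''
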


\begin{proof}
Let $B_{8R}(x_0) \subset \Omega$. We may assume that $x_0 = 0$ and that $\|u\|_{L^{\infty}(B_{4R})} < \infty$. The idea of the proof is to find a small constant $\alpha \in (0,1)$ and to construct a non-increasing sequence $(M_j)$ and a non-decreasing sequence $(m_j)$ satisfying
\begin{equation} \label{eq:sequences}
m_j \leq u \leq M_j \quad \text{in}~ B_{4R_j} \quad\text{and}\quad M_j - m_j = L 4^{-\alpha j},
\end{equation}
for all $j \geq 0$, where $R_j = 4^{-j}R$ and
\begin{equation*}
L = C_0 \|u\|_{L^{\infty}(B_{4R})} + \mathrm{Tail}_{f'}(u; 0, 4R)
\end{equation*}
for some $C_0 > 0$. Once we construct such sequences, the desired result follows by a standard argument.

We set $M_j = 4^{-\alpha j} L/2$ and $m_j = -4^{-\alpha j}L/2$ for $j=0, 1, \dots, j_0$ for some $j_0 \in \mathbb{N}$ to be determined later. Moreover, we take $C_0$ sufficiently large so that $C_0 \geq 2 \cdot 4^{\alpha j_0}$. This ensures that $M_j$ and $m_j$ satisfy \eqref{eq:sequences} up to $j_0$. Let us now fix $j \geq j_0$ and suppose that the sequences $(M_j)$ and $(m_j)$ have been constructed up to $j$. It is enough to construct $M_{j+1}$ and $m_{j+1}$ satisfying \eqref{eq:sequences}. We first assume
\begin{equation}\label{eq:Hoeldcase1}
|B_{2R_j} \cap \lbrace u \geq m_{j} + (M_j-m_j)/2 \rbrace| \geq \frac{1}{2} |B_{2R_j}|. 
\end{equation}
In this case, we define $v=u-m_j$ and set $H=(M_j-m_j)/2 = 4^{-\alpha j}L/2$. Then, $0 \leq v \leq 2H$ in $B_{4R_j}$ and
\begin{equation*}
|B_{2R_j} \cap \lbrace v \geq H \rbrace| \geq \frac{1}{2} |B_{2R_j}|.
\end{equation*}
To apply \Cref{thm:growth} to $v$, we let $\delta$ be the constant in \Cref{thm:growth} and verify \eqref{eq:growth-tail}. Indeed, it is easy to see that
\begin{equation*}
v(y) \geq -2H \left( \left( \frac{|y|}{R_j} \right)^\alpha -1 \right)
\end{equation*}
for $y \in B_{4R} \setminus B_{4R_j}$ and $v(y) \geq -|u(y)| - L/2$ for $y \in \Rd \setminus B_{4R}$. Thus, using \eqref{eq:der-subadd}
\begin{equation*}
\begin{split}
&\int_{\Rd \setminus B_{4R_j}} f'\left( \frac{v_-(y)}{|y|^s} \right) |y|^{-d-s} \dy \\
&\leq C\int_{B_{4R} \setminus B_{4R_j}} f'\left( \frac{2H((|y|/R_j)^\alpha-1)}{|y|^s} \right) \frac{\dy}{|y|^{d+s}} + C\int_{\Rd \setminus B_{4R}} f'\left( \frac{|u(y)|+L/2}{|y|^s} \right) \frac{\dy}{|y|^{d+s}} \\
&=: J_1 + J_2.
\end{split}
\end{equation*}
Using the change of variables, we obtain
\begin{equation*}
J_1 \leq \frac{C}{R_{j}^{s}} \int_{\Rd \setminus B_4} f'\left( \frac{2H(|y|^\alpha-1)}{R_j^s |y|^s} \right) \frac{\d y}{|y|^{d+s}} \leq \frac{C}{R_{j}^{s}} \int_4^{\infty} f'\left( \frac{2H(\rho^\alpha-1)}{R_j^s \rho^s} \right) \frac{\d \rho}{\rho^{1+s}}.
\end{equation*}
By \eqref{eq:derivativedoubling1} and \eqref{eq:derivativedoubling2} we have
\begin{equation*}
f'\left( \frac{2H(\rho^\alpha-1)}{R_j^s \rho^s} \right) \leq \frac{q}{p} \max \left \lbrace \left( \frac{8(\rho^\alpha-1)}{\delta \rho^s} \right)^{q-1}, \left( \frac{8(\rho^\alpha-1)}{\delta \rho^s} \right)^{p-1} \right \rbrace f'\left( \frac{\delta H}{(4R_j)^s} \right),
\end{equation*}
and hence
\begin{equation*}
J_1 \leq \frac{C}{(4R_j)^s} \left( \int_{4}^{\infty} \frac{(\rho^\alpha-1)^{q-1}}{\rho^{1+s_0p}} \, \d\rho \right) f'\left(\frac{\delta H}{(4R_j)^s} \right).
\end{equation*}
Taking $\alpha = \alpha(d, s_0, p, q) \in (0,1)$ sufficiently small so that
\begin{equation}
\label{eq:alphacond}
C \int_4^{\infty} \frac{(\rho^\alpha-1)^{q-1}}{\rho^{1+s_0p}} \,\d\rho \leq \frac{1}{2},
\end{equation}
we obtain
\begin{equation*}
J_1 \leq \frac{1}{2} (4R_{j})^{-s} f'\left( \frac{\delta H}{(4R_j)^s} \right).
\end{equation*}

For $J_2$, we use \eqref{eq:der-subadd} to deduce
\begin{equation*}
J_2 \leq C \left( \int_{\Rd \setminus B_{4R}} f'\left( \frac{|u(y)|}{|y|^s} \right) \frac{\dy}{|y|^{d+s}} + \int_{\Rd \setminus B_{4R}} f'\left( \frac{L/2}{|y|^s} \right) \frac{\dy}{|y|^{d+s}} \right).
\end{equation*}
Since $L \geq \mathrm{Tail}_{f'}(u; 0, 4R)$, it follows from \eqref{eq:f-finv} and the definition of $H$
\begin{equation*}
\int_{\Rd \setminus B_{4R}} f'\left( \frac{|u(y)|}{|y|^s} \right) \frac{\dy}{|y|^{d+s}}
\leq R^{-s} f'\left( \frac{L}{R^s} \right) = R^{-s} f'\left( \frac{2H4^{\alpha j}}{R^s} \right).
\end{equation*}
Choosing $j_0$ sufficiently large so that $8\cdot 4^{(\alpha-s_0)j_0} \leq \widetilde{\delta}$, for some $\widetilde{\delta} < \delta$ to be determined later, we have
\begin{equation*}
R^{-s} f'\left( \frac{2H4^{\alpha j}}{R^s} \right) \leq (4R_j)^{-s} f'\left( \frac{\widetilde{\delta} H}{(4R_j)^s} \right).
\end{equation*}
Similarly, by \eqref{eq:derivativedoubling2}
\begin{equation*}
\begin{split}
\int_{\Rd \setminus B_{4R}} f'\left( \frac{L/2}{|y|^s} \right) \frac{\dy}{|y|^{d+s}}
&= \frac{1}{R^s} \int_{\Rd \setminus B_{4}} f'\left( \frac{L/2}{R^s |y|^s} \right) \frac{\dy}{|y|^{d+s}} \\
&\leq C_1 \left( \int_{\Rd \setminus B_{4}} |y|^{-s(p-1)} \frac{\dy}{|y|^{d+s}} \right) \frac{1}{R^s} f'\left( \frac{L}{R^s} \right) \\
&\leq C_2(4R_j)^{-s} f'\left( \frac{\widetilde{\delta} H}{(4R_j)^s} \right)
\end{split}
\end{equation*}
for some $C_1,C_2 \ge 1$ depending on $d$, $s_0$ and $p$.
We now choose $\widetilde{\delta} = (2^{q+1} C_2)^{-\frac{1}{p-1}}\delta > 0$, and obtain:
\begin{equation*}
(1-s)\int_{\Rd \setminus B_{4R_j}} f'\left( \frac{v_-(y)}{|y|^s} \right) |y|^{-d-s} \dy \leq J_1 + J_2 \leq (4R_j)^{-s} f'\left( \frac{\delta H}{(4R_j)^s} \right).
\end{equation*}
This inequality together with \eqref{eq:finv-f} verify \eqref{eq:growth-tail} and allow us to apply \Cref{thm:growth} to $v$. Therefore, we obtain $v \geq \delta H$ in $B_{R_j}$, which implies
\begin{equation*}
u \geq  m_j + \delta H = m_j + 4^{-\alpha j} \frac{\delta}{2}L  \ge  m_j + 4^{-\alpha j}(1-4^{-\alpha})L \quad\text{in}~ B_{R_j},
\end{equation*}
upon choosing $\alpha \in (0,1)$ so small that it satisfies \eqref{eq:alphacond} and $\alpha < \log_4\left( \frac{2}{2-\delta} \right)$.

Therefore, we define $M_{j+1} = M_j$ and $m_{j+1} = m_j + 4^{-\alpha j} (1-4^{-\alpha})L$ in the case \eqref{eq:Hoeldcase1}. The other case can be proved in a similar way.
\end{proof}

%%%%%%%%%%%%%%%%%%%%%%%%%%%%%%%%%%%%%%
\section{Local boundedness}\label{sec:locbdd}
%%%%%%%%%%%%%%%%%%%%%%%%%%%%%%%%%%%%%%

The goal of this section is to prove local boundedness of functions $u \in G(\Omega; q, c, s, f)$. More precisely, we prove that a function $u \in G_+(\Omega; q, c, s, f)$ is locally bounded from above. Similarly, one can prove that functions $u \in G_-(\Omega; q, c, s, f)$ are locally bounded from below by considering $-u$.

\begin{theorem}
\label{thm:locB}
Let $1 < p \leq q < p^{\ast}$, $s_0 \in (0,1)$, $c_0, c_1 > 0$ and assume $s \in [s_0, 1)$. Let $f: [0, \infty) \to [0, \infty)$ be a convex increasing function satisfying \eqref{eq:pq-upper} and \eqref{eq:non-degeneracy}. If $u \in G_+(\Omega; q, c_1, s, f)$, then $u$ is locally bounded from above. Moreover, for each $B_{2R}(x_0) \subset \Omega$, there exists $C > 0$ such that for every $\delta \in (0,1)$
\begin{equation*}
\sup_{B_{R}(x_0)} u \leq \delta \mathrm{Tail}_{f'}(u_+;x_0, R) + C \delta^{-(q-1)\frac{p^{\ast}}{p} \frac{1}{p^{\ast}-q}} \left( \fint_{B_R(x_0)} u_+^q(x) \dx \right)^{\frac{1}{p}\frac{p^{\ast}-p}{p^{\ast}-q}} + \delta^{\frac{q-1}{q}}.
\end{equation*}
The constant $C$ depends on $d$, $s_0$, $p$, $q$, $p^{\ast}-q$, $c_0$, $c_1$ and $R$.
\end{theorem}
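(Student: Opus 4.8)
\emph{Strategy.} The plan is to run a De Giorgi iteration on super-level sets of $u$. The three ingredients are the Caccioppoli inequality \eqref{eq:Caccestimate} built into the definition of $G_+(\Omega;q,c_1,s,f)$, the fractional Sobolev inequality \Cref{thm:frac-sobolev} with critical exponent $p^{\ast}=dp/(d-sp)$, and the coercivity assumption \eqref{eq:non-degeneracy}, which yields $\Phi_{W^{s,f}(B_\rho)}(w)\ge c_0(1-s)[w]_{W^{s,p}(B_\rho)}^{p}$ and thus lets the Caccioppoli estimate be fed into Sobolev. The hypothesis $q<p^{\ast}$ is precisely what makes the resulting recursion superlinear.

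\emph{Iteration.} We may take $x_0=0$. For a level $k>0$ to be fixed later, set for $j\ge0$
\[
\rho_j=(1+2^{-j})R,\qquad k_j=(1-2^{-j})k,\qquad w_j=(u-k_j)_+,\qquad A_j=\fint_{B_{\rho_j}}w_j^{\,q}\dx,
\]
so that $A_0\le\fint_{B_{2R}}u_+^{\,q}\dx$; the nested balls fit inside $\Omega$ because $B_{2R}(x_0)\subset\Omega$, and the form with $\fint_{B_R}$ in the statement follows after the usual localization. On $\{w_{j+1}>0\}$ one has $w_j>2^{-j-1}k$, so Chebyshev gives $|\{w_{j+1}>0\}\cap B_{\rho_j}|\le(2^{j+1}/k)^{q}|B_{\rho_j}|A_j$, and Hölder controls $\|w_{j+1}\|_{L^1(B_{\rho_j})}$ and $\|w_{j+1}\|_{L^p(B_{\rho_{j+1}})}^p$ by $(2^{j+1}/k)^{q-1}|B_{\rho_j}|A_j$ and $(2^{j+1}/k)^{q-p}|B_{\rho_j}|A_j$, respectively. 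Now apply \eqref{eq:Caccestimate} to $w_{j+1}$ with inner radius $\rho_{j+1}$ and outer radius $\rho_j$, drop the nonnegative double integral on the left, and use \eqref{eq:non-degeneracy}. For the first term on the right one splits $B_{\rho_j}$ according to whether $w_{j+1}\le\rho_j^s$ or $w_{j+1}>\rho_j^s$: on $\{w_{j+1}\le\rho_j^s\}$ one uses $f(w_{j+1}/\rho_j^s)\le w_{j+1}/\rho_j^s$ (convexity, $f(1)=1$, \Cref{lem:conv}) and on $\{w_{j+1}>\rho_j^s\}$ one uses $f(w_{j+1}/\rho_j^s)\le(w_{j+1}/\rho_j^s)^{q}$ (\Cref{lemma:upper}); for the tail term one uses monotonicity of $f'$, $w_{j+1}\le u_+$, $\Rd\setminus B_{\rho_{j+1}}\subset\Rd\setminus B_R$, \eqref{eq:f-finv} and $f'(t)\le q\max\{1,t\}^{q-1}$ (from $tf'(t)\le qf(t)$ and convexity) to bound it by a constant times $\|w_{j+1}\|_{L^1(B_{\rho_j})}\,R^{-s}f'(R^{-s}\mathrm{Tail}_{f'}(u_+;0,R))$. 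Feeding all of this into \Cref{thm:frac-sobolev} for $w_{j+1}$ and using the interpolation inequality (valid since $q<p^{\ast}$)
\[
\fint_{B_{\rho_{j+1}}}w_{j+1}^{\,q}\le|B_{\rho_{j+1}}|^{-1}\,\|w_{j+1}\|_{L^{p^{\ast}}(B_{\rho_{j+1}})}^{q}\,\big|\{w_{j+1}>0\}\cap B_{\rho_{j+1}}\big|^{1-q/p^{\ast}}
\]
produces a recursion $A_{j+1}\le C\,b^{\,j}\,\Theta(k)\,A_j^{\,1+\eta}$ with $\eta=q(p^{\ast}-p)/(pp^{\ast})>0$, a base $b>1$ depending only on $d,s_0,p,q$, and $\Theta(k)=k^{-q(p^{\ast}-q)/p^{\ast}}\big(C_R+k^{-(q-1)}f'(R^{-s}\mathrm{Tail}_{f'}(u_+;0,R))\big)^{q/p}$, where $C_R$ depends on $d,s_0,p,q,c_0,c_1,R$.

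\emph{Conclusion and choice of the level.} By the fast geometric convergence lemma \cite[Lemma~4.4]{ChKi21}, $A_j\to0$ provided $A_0\le(C\Theta(k))^{-1/\eta}b^{-1/\eta^{2}}$; and $A_j\to0$ forces $(u-k)_+=0$ a.e.\ in $B_R$, i.e.\ $\sup_{B_R}u\le k$. It remains to check that the right-hand side of \eqref{eq:local-boundedness} is an admissible $k$. Writing $J=\fint_{B_R}u_+^{\,q}\dx$ and $\mathcal{T}=\mathrm{Tail}_{f'}(u_+;0,R)$, expanding the bracket in $\Theta(k)$ and using $f'(R^{-s}\mathcal{T})\le C_R(\mathcal{T}^{q-1}+1)$, the smallness condition on $A_0\lesssim J$ breaks into the requirement $k\ge C_RJ^{b}$ with $b=\tfrac1p\tfrac{p^{\ast}-p}{p^{\ast}-q}$ and the requirement $k\ge C_RJ^{\eta/E}\mathcal{T}^{(q-1)q/(pE)}$ with $E=q\tfrac{p^{\ast}-q}{p^{\ast}}+\tfrac{(q-1)q}{p}$. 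The exponents are arranged precisely so that $(\eta/E)/b+(q-1)q/(pE)=1$, whence Young's inequality with parameter $\delta\in(0,1)$ rewrites the second requirement as $k\ge\delta\mathcal{T}+C\delta^{-a}J^{b}$ with $a=(q-1)\tfrac{p^{\ast}}{p}\tfrac1{p^{\ast}-q}$; the remaining $k$-independent contributions (in particular the regime $\mathcal{T}\le R^s$) are absorbed into the summand $\delta^{(q-1)/q}$. Choosing $C$ large enough in \eqref{eq:local-boundedness} then makes $k$ satisfy all requirements simultaneously, which gives the estimate and, in particular, local boundedness of $u\in G_+$.

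\emph{Main obstacle.} The delicate part is twofold. First, under \eqref{eq:pq-upper} and convexity alone the growth function $f$ may be as flat as linear near the origin, so neither $\Phi_{L^f}$ nor the $f'$-tail can be replaced by a single power of $u_+$; one must carry out the split into $\{w/\rho^s\le1\}$ and $\{w/\rho^s>1\}$ and track several powers of $k$ at once. Second, the exponent bookkeeping through \Cref{thm:frac-sobolev}, the $q<p^{\ast}$-interpolation and the Young splitting must reproduce exactly the exponents $a$, $b$ and $(q-1)/q$ in \eqref{eq:local-boundedness}; this is where the hypothesis $q<p^{\ast}$ and the quantitative dependence on $p^{\ast}-q$ enter. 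Once the recursion $A_{j+1}\le Cb^{\,j}\Theta(k)A_j^{1+\eta}$ is in place, the rest is the classical De Giorgi scheme together with \cite[Lemma~4.4]{ChKi21}.
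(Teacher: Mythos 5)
Your proposal is correct and takes essentially the same De Giorgi iteration route as the paper's proof: you feed the Caccioppoli inequality built into the definition of $G_+$, the coercivity $f(t)\geq c_0t^p$, and the fractional Sobolev inequality (\Cref{thm:frac-sobolev}) into a fast geometric recursion for the quantities $A_j=\fint_{B_{\rho_j}}w_j^q$, and then solve the convergence threshold for the level $k$. The paper organizes the recursion slightly differently---it inserts intermediate levels $\tilde k_j=(k_j+k_{j+1})/2$ and uses the pointwise bound $\tilde w_j^{p^*}\geq(k_{j+1}-\tilde k_j)^{p^*-q}w_{j+1}^q$ in place of your H\"older interpolation (so the paper's iteration exponent is $\beta=p^*/p-1$ rather than your $\eta=q(p^*-p)/(pp^*)$), and it fixes $k\geq\delta\,\mathrm{Tail}_{f'}(u_+;0,R)+\delta^{(q-1)/q}$ from the outset instead of concluding with Young's inequality---but the exponent bookkeeping works out to the same constants $a=(q-1)\frac{p^*}{p}\frac1{p^*-q}$ and $b=\frac1p\frac{p^*-p}{p^*-q}$ in both arguments.
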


\begin{proof}
Let $x_0 \in \Omega$ and $R > 0$ be such that $B_{2R}(x_0) \subset \Omega$. We assume without loss of generality that $x_0 = 0$. For $j=0,1,\dots$, let
\begin{equation*}
R_j = (1+2^{-j})R, \quad k_j = (1-2^{-j})k, \quad \text{and}\quad \tilde{k}_j = (k_j+k_{j+1})/2,
\end{equation*}
where $k$ is an arbitrary positive number that will be determined later. We define $w_j = (u-k_j)_+$, $\tilde{w}_j = (u-\tilde{k}_j)_+$, $A_{k,R}^+ = \{ x \in B_R : u(x) > k \}$, and
\begin{equation*}
Y_j = \fint_{B_{R_j}} w_j^q(x) \dx.
\end{equation*}
Since $u \in  G_+(\Omega;q,c_1,s,f)$, using the assumptions \eqref{eq:pq-upper} and \eqref{eq:non-degeneracy} we have
\begin{equation*}
\begin{split}
(1-s)[\tilde{w}_j]_{W^{s, p}(B_{R_{j+1}})}^p 
&\leq C 2^{qj} \left( \int_{A_{\tilde{k}_j, R_j}^+} \left( \frac{\tilde{w}_j(x)}{R^s} \right)^q \dx + |A_{\tilde{k}_j, R_j}^+| \right) \\
&\quad + C(1-s) 2^{(d+sq)j} \|\tilde{w}_j\|_{L^1(B_{R_j})} \int_{\Rd \setminus B_{R_{j+1}}} f'\left( \frac{\tilde{w}_j(y)}{|y|^s} \right) \frac{\dy}{|y|^{d+s}}\\
&=: J_1 + J_2
\end{split}
\end{equation*}
for some $C = C(d, q, c_0, c_1) > 0$. Since
\begin{equation*}
|A_{\tilde{k}_j, r_j}^+| \leq \frac{1}{(\tilde{k}_j - k_j)^{q}} \int_{A_{\tilde{k}_j, r_j}^+} w_j^{q}(x) \dx \leq C \left( \frac{2^{j}}{k} \right)^{q} Y_j
\end{equation*}
and $\tilde{w}_j \leq w_j$, by assuming $k \geq \delta^{\frac{q-1}{q}}$ we have
\begin{equation*}
J_1 \leq C \delta^{-(q-1)} 2^{2qj} Y_j
\end{equation*}
for some $C = C(d, q, c_0, c_1, R) > 0$. For $J_2$, we observe 
\begin{equation*}
(2^{-j-2}k)^{q-1} \tilde{w}_j = (\tilde{k}_j-k_j)^{q-1} \tilde{w}_j \leq w_j^q
\end{equation*}
and $\mathrm{Tail}_{f'}(\tilde{w}_j; 0, R_{j+1}) \leq \mathrm{Tail}_{f'}(u_+; 0, R)$. Fix $\delta \in (0,1)$ and assume
\begin{equation} \label{eq:k-tail}
k \geq \delta \mathrm{Tail}_{f'}(u_+; 0, R) + \delta^{\frac{q-1}{q}}.
\end{equation}
Then, using \eqref{eq:k-tail}, \eqref{eq:f-finv}, \eqref{eq:pq-growth} and $f(1) = 1$, we deduce
\begin{equation*}
\begin{split}
J_2
&\leq C 2^{(d+sq)j} \frac{R_j^d}{(2^{-j-2}k)^{q-1}} Y_j \frac{1}{R_{j+1}^{s}} f'\left( \frac{k}{\delta R_{j+1}^{s}} \right) \\
&\leq C 2^{(d+sq)j} \frac{2^{(q-1)j}}{k^{q-1}}\frac{(1+(\frac{k}{\delta R^s})^q)}{\frac{k}{\delta R^s}} Y_j \leq C2^{(d+2q)j} \delta^{-(q-1)} Y_j
\end{split}
\end{equation*}
for some constant $C = C(d, q, c_0, c_1,s_0) > 0$.
Combining the estimates of $J_1$ and $J_2$, we arrive at
\begin{equation*}
(1-s)[\tilde{w}_j]_{W^{s, p}(B_{R_{j+1}})}^p \leq C 2^{(d+2q)j} \delta^{-(q-1)} Y_j.
\end{equation*}

Using \Cref{thm:frac-sobolev} and the inequality $\tilde{w}_j^{p^{\ast}} \geq (k_{j+1}-\tilde{k}_j)^{p^{\ast}-q} w_{j+1}^{q}$, we deduce 
\begin{equation*}
\begin{split}
(k_{j+1}-\tilde{k}_j)^{(p^{\ast}-q)p/p^{\ast}} Y_{j+1}^{p/p^{\ast}}
&\leq C \|\tilde{w}_j\|_{L^{p^{\ast}}(B_{R_{j+1}})}^p \\
&\leq C \left( \|\tilde{w}_j\|_{L^p(B_{R_{j+1}})}^p + (1-s) [\tilde{w}_j]_{W^{s, p}(B_{R_{j+1}})}^p \right) \\
&\leq C 2^{(d+2q)j} \delta^{-(q-1)} Y_j,
\end{split}
\end{equation*}
or
\begin{equation*}
Y_{j+1} \leq C k^{q-p^{\ast}} \delta^{-(q-1)p^{\ast}/p} b^j Y_j^{1+\beta},
\end{equation*}
where $b = 2^{p^{\ast}-q+(d+2q)p^{\ast}/p}$ and $\beta = \frac{p^{\ast}}{p}-1$. If $Y_0 \leq (C k^{q-p^{\ast}} \delta^{-(q-1)p^{\ast}/p})^{-1/\beta} b^{-1/\beta^2}$, then $Y_j \to 0$ as $j \to \infty$. Thus, if we assume
\begin{equation} \label{eq:k-Lq}
k^{p^{\ast}-q} \geq C \delta^{-(q-1)\frac{p^{\ast}}{p}} b^{1/\beta} Y_0^{\beta},
\end{equation}
then
\begin{equation*}
\sup_{B_{R}} u \leq k.
\end{equation*}
We now take
\begin{equation*}
k = \delta \mathrm{Tail}_{f'}(u_+;0, R) + C_0 \delta^{-(q-1)\frac{p^{\ast}}{p} \frac{1}{p^{\ast}-q}} \left( \fint_{B_R} u_+^q(x) \dx \right)^{\frac{1}{p}\frac{p^{\ast}-p}{p^{\ast}-q}} + \delta^{\frac{q-1}{q}}
\end{equation*}
with $C_0 = (Cb^{1/\beta})^{1/(p^{\ast}-q)}$, which is in accordance with \eqref{eq:k-tail} and \eqref{eq:k-Lq}.
\end{proof}

%%%%%%%%%%%%%%%%%%%%%%%%%%%%%%%%%%%%%%
\section{Application to minimizers} \label{sec:minimizer}
%%%%%%%%%%%%%%%%%%%%%%%%%%%%%%%%%%%%%%

In this section, we prove \Cref{thm:minimizer} by showing that local minimizers of \eqref{eq:nonlocalfunctional} belong to the De Giorgi class $G(\Omega; q, c, s, f)$ under some assumptions on $f$ and using several results from previous sections.\\
Let us first define local minimizers of \eqref{eq:nonlocalfunctional}. We assume that $f$ is a convex increasing function and $k$ satisfies \eqref{eq:k}, i.e.,
\begin{align}\tag{$k$}
k(x,y) = k(y,x) \quad\text{and}\quad \Lambda^{-1} \leq k(x,y) \leq \Lambda \quad\text{for a.e. } x, y \in \Rd.
\end{align}

\begin{definition} [minimizer]
We say that $u \in V^{s,f}(\Omega | \Rd)$ is a {\it local subminimizer} ({\it superminimizer}) of \eqref{eq:nonlocalfunctional} if for every measurable function $v : \Rd \to \R$ with $v = u$ a.e. in $\Rd \setminus \Omega$ and $v \le u$ ($v \ge u$) a.e. in $\Omega$, it holds that $\mathcal{I}_f(u) \le \mathcal{I}_f(v)$. We call $u \in V^{s, f}(\Omega|\Rd)$ a {\it local minimizer} of \eqref{eq:nonlocalfunctional} if it is a subminimizer and a superminimizer.
\end{definition}

Recall that we always assume $f(0) = 0$ and $f(1)=1$. This assumption can be made without loss of generality since $u$ minimizes $\mathcal{I}_f$ if and only if $u$ minimizes $\mathcal{I}_{(f - f(0))/(f(1)-f(0))}$.

\begin{theorem} \label{thm:DG-minimizer}
Let $s \in (0,1)$, $q > 1$ and $\Lambda \geq 1$. Let $f: [0, \infty) \to [0, \infty)$ be a convex increasing function satisfying \eqref{eq:pq-upper} and let $k: \Rd \times \Rd \to \R$ be a measurable function satisfying \eqref{eq:k}. Let $u \in V^{s,f}(\Omega|\Rd)$ be a local subminimizer of \eqref{eq:nonlocalfunctional}. Then, $u \in G_+(\Omega;q,c,s,f)$ for some $c = c(d, q, \Lambda)> 0$.
\end{theorem}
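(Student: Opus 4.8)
I would obtain \eqref{eq:Caccestimate} for $G_+$ by testing the subminimality of $u$ against the competitor that truncates $u$ from above at level $k$ inside $B_r(x_0)$, adapting the nonlocal Caccioppoli arguments of \cite{DiKuPa14, Coz17} to the Orlicz setting. Assume $x_0 = 0$ and fix $0 < r < R \le d(0,\partial\Omega)$, $k\in\R$. Write $w = (u-k)_+$, $w_- = (u-k)_-$, $A^{\pm} = \{\pm(u-k) > 0\}$, and pick a Lipschitz cutoff $\tau$ with $0 \le \tau \le 1$, $\tau \equiv 1$ on $B_r$, $\supp\tau \subset B_{(R+r)/2}$ and $|\nabla\tau| \le C/(R-r)$. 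Put $v = u - \tau w$. Then $v$ is measurable, $v = u$ a.e.\ on $\Rd \setminus \Omega$ and $v \le u$ on $\Omega$, so $v$ is admissible for the subminimality test. Since $f$ is convex with $f(0)=0$, it automatically satisfies \eqref{eq:pq} with lower exponent $1$ (\Cref{lem:conv}), so the auxiliary estimates from \Cref{sec:preliminaries} — in particular $f'(t) \ge f(t)/t$, $f(a+b) \le 2^q(f(a)+f(b))$, $f(\lambda t) \le \lambda^q f(t)$ for $\lambda \ge 1$ (\Cref{lemma:upper}), and \eqref{eq:der-subadd} — are available with $p$ replaced by $1$. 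Using these and the finiteness of the tail (\Cref{prop:tail-finite}) one first checks $\mathcal{I}_f(v) < \infty$; since also $\mathcal{I}_f(u) \asymp \Phi_{V^{s,f}(\Omega|\Rd)}(u) < \infty$, subminimality rearranges into
\begin{equation*}
(1-s)\iint_{(\Omega^c\times\Omega^c)^c} \left[ f\Bigl(\tfrac{|u(x)-u(y)|}{|x-y|^s}\Bigr) - f\Bigl(\tfrac{|v(x)-v(y)|}{|x-y|^s}\Bigr) \right] \frac{k(x,y)}{|x-y|^d} \dy \dx \le 0 .
\end{equation*}
Because $v = u$ off $B_{(R+r)/2}$ and on $A^-$, the bracketed integrand vanishes unless $x$ or $y$ lies in $B_{(R+r)/2}\cap A^+$.

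\textbf{Pointwise lower bounds.} The core step is to bound the bracketed integrand from below pointwise, using only convexity, $\Lambda^{-1}\le k\le\Lambda$ and \eqref{eq:pq-upper}. Split $\Rd\times\Rd$ according to membership of $x,y$ in $A^+/A^-$ and in $B_r$, the annulus $B_{(R+r)/2}\setminus B_r$, or its complement: (a) if $x,y\in A^-$ the integrand is $0$; (b) if $x,y\in B_r\cap A^+$ then $v(x)=v(y)=k$ and the integrand equals $k(x,y)\,f\bigl(|w(x)-w(y)||x-y|^{-s}\bigr)$, which supplies $\Phi_{W^{s,f}(B_r)}(w)$; (c) if $x\in B_r\cap A^+$ and $y\in A^-$ (with $y$ arbitrary) then $v(x)=k$, $v(y)=u(y)$ and, by \Cref{lemma:convexlemma} with $\theta=\tfrac12$, the integrand dominates $\tfrac{1}{2\Lambda}\bigl[f'\bigl(\tfrac{w_-(y)}{|x-y|^s}\bigr)\tfrac{w(x)}{|x-y|^s} + f\bigl(\tfrac{w(x)}{|x-y|^s}\bigr)\bigr]$, producing the cross term of \eqref{eq:Caccestimate} together with the remaining part of $\Phi_{W^{s,f}(B_r)}(w)$ (recall $w=0$ on $A^-$); (d) if $x\in B_r\cap A^+$ and $y\in A^+$ with $y$ far (outside $B_{(R+r)/2}$), then $\tau(x)=1$ and \Cref{lemma:CaccHelpLemma} (with $\mu=0$) gives the integrand $\ge -C\, f'\bigl(\tfrac{w(y)}{|x-y|^s}\bigr)\tfrac{w(x)}{|x-y|^s}$, a tail-type error; (e) if $x,y\in A^+$ with one of them in the annulus where $\tau$ varies, a further application of \Cref{lemma:CaccHelpLemma} (with a suitable $\mu\in[0,1]$ built from $\tau$) together with the Lipschitz bound $|\tau(x)-\tau(y)|\le C|x-y|/(R-r)$, monotonicity, and \eqref{eq:der-subadd} bounds the integrand below by $-C$ times terms of the form $f'\bigl(\tfrac{w(y)}{|x-y|^s}\bigr)\tfrac{w(x)}{|x-y|^s}$ and $f\bigl(\tfrac{|\tau(x)-\tau(y)|\,w(x)}{|x-y|^s}\bigr)$. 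The symmetric bounds with $x,y$ interchanged hold as well.

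\textbf{Reassembly.} Integrating these lower bounds over $(\Omega^c\times\Omega^c)^c$ and using that the whole integral is $\le 0$ yields $c^{-1}\times[\text{left side of }\eqref{eq:Caccestimate}] \le c\times[\text{integrated error terms}]$. The far-field errors $f'\bigl(\tfrac{w(y)}{|x-y|^s}\bigr)\tfrac{w(x)}{|x-y|^s}$ with $x\in B_r$, $y\notin B_{(R+r)/2}$ are absorbed using the geometric comparison $|y-x_0|\le\tfrac{2R}{R-r}|x-y|$ and the $f'$-doubling \eqref{eq:der-subadd}/\eqref{eq:derivativedoubling2}, giving exactly the tail term $c(1-s)\bigl(\tfrac{R}{R-r}\bigr)^{d+sq}\|w\|_{L^1(B_R)}\int_{\Rd\setminus B_r} f'\bigl(\tfrac{w(y)}{|y-x_0|^s}\bigr)\tfrac{\dy}{|y-x_0|^{d+s}}$. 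The cutoff errors $f\bigl(\tfrac{|\tau(x)-\tau(y)|\,w(x)}{|x-y|^s}\bigr)$ are handled by $|\tau(x)-\tau(y)|\le C|x-y|/(R-r)$, so the argument is $\le\tfrac{CR}{R-r}\tfrac{w(x)}{R^s}$ for $|x-y|\le 2R$, while the surplus factor $|x-y|^{1-s}$ keeps the kernel $(1-s)|x-y|^{-d}\dy$ integrable near the diagonal uniformly as $s\to 1^-$; then $f(\lambda t)\le\lambda^q f(t)$ with $\lambda=CR/(R-r)\ge 1$ (\Cref{lemma:upper}) produces the term $c\bigl(\tfrac{R}{R-r}\bigr)^q\Phi_{L^f(B_R)}(w/R^s)$. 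This establishes \eqref{eq:Caccestimate} with $c=c(d,q,\Lambda)$; only $q$ (not $p$) enters because we use solely \eqref{eq:pq-upper}, the convexity bound $f'(t)\ge f(t)/t$, and $q/p\le q$ in \eqref{eq:der-subadd} (since $p\ge 1$).

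\textbf{Expected main obstacle.} The hard part will be case (e): producing a pointwise lower bound for $f(|u(x)-u(y)||x-y|^{-s}) - f(|v(x)-v(y)||x-y|^{-s})$ when $\tau$ is nonconstant whose negative part, after integration against the fractional kernel, is absorbed \emph{exactly} into the $\Phi_{L^f(B_R)}(w/R^s)$ and tail terms with the correct powers of $R/(R-r)$. This is where the interplay of convexity (via \Cref{lemma:CaccHelpLemma}), the $(p,q)$-scaling of $f$, and the cancellation from the cutoff — which supplies the essential gain $|x-y|^{1-s}$ — must be balanced, and it is also the place where robustness of the constant as $s\to 1^-$ has to be monitored.
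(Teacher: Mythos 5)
Your overall strategy (test subminimality against a competitor that truncates $u$ inside a cutoff, split the double integral by sign of $u-k$ and location relative to the cutoff, and bound the bracketed difference pointwise using \Cref{lemma:convexlemma} and \Cref{lemma:CaccHelpLemma}) is the same as the paper's. But there are two genuine gaps, plus a structural step you omit.

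\textbf{Cutoff power and case (e).} The paper's competitor is $v = u - \eta^q w_+$, not $v = u - \tau w_+$; the power $q$ is essential. When $x,y\in A_k^+$ with both inside the support of the cutoff, the paper does not use \Cref{lemma:CaccHelpLemma}: it writes $|v(x)-v(y)|/|x-y|^s$ as a convex combination with weights $1-\eta^q(x)$ and $\eta^q(x)$, passes $f$ through by convexity, and then removes $\eta(x)$ from the cutoff term via $f(\lambda t)\le\lambda^q f(t)$ (\Cref{lemma:upper}); the prefactor $\eta^q(x)\cdot\eta(x)^{-q}$ is exactly $1$, giving \eqref{eq:Cacc4}. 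With a linear cutoff $\tau$ the analogous prefactor $\tau(x)^{1-q}$ blows up as $\tau(x)\to 0^+$, so your decomposition cannot close. Worse, your use of \Cref{lemma:CaccHelpLemma} for these pairs produces the error $f'\bigl(w_+(y)/|x-y|^s\bigr)\,w_+(x)/|x-y|^s$ which, paired against $(1-s)|x-y|^{-d}$ with both $x,y$ in the annulus, is of order $|x-y|^{-d-sq}$ near the diagonal and does not integrate. The paper reserves \Cref{lemma:CaccHelpLemma} for $y\notin B_\tau$ (see \eqref{eq:Cacc6}), where this singularity is harmless, and handles the near-diagonal pairs via the convexity splitting \eqref{eq:Cacc4}, whose cutoff-type term gains the factor $|x-y|^{1-s}$ and is integrable.

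\textbf{Hole-filling.} The paper's bound \eqref{eq:Cacc4} retains a residual $f(|w_+(x)-w_+(y)|/|x-y|^s)$; integrated over $B_\tau^2\setminus B_\rho^2$ this gives $\Phi_{W^{s,f}(B_\tau)}(w_+)-\Phi_{W^{s,f}(B_\rho)}(w_+)$, the same kind of quantity as the left side of \eqref{eq:Caccestimate}. For that reason the paper works with nested radii $r\le\rho<\tau\le R$ and closes via the ``fill-the-hole'' step \eqref{eq:hole-filling-minimizer} and the iteration lemma it cites. Your proposal uses a single pair $r,R$ and never says how this residual is absorbed. There is in fact a shortcut available: for $x,y\in A_k^+$ one has $u(x)-u(y)=w_+(x)-w_+(y)$, so the piece $-f(|u(x)-u(y)|/|x-y|^s)$ of $A(x,y)$ exactly cancels the coefficient-one part of the convexity bound, and the residual disappears; with that cancellation, the $\eta^q$ cutoff, and the convexity splitting, a single-step argument can close without hole-filling. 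But your write-up uses the wrong cutoff, the wrong lemma in case (e), and mentions neither the cancellation nor the iteration, so as it stands it does not yield \eqref{eq:Caccestimate}.
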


\begin{proof}
We follow the strategy carried out in \cite[Proposition 7.5]{Coz17}. Let $x_0 \in \Omega$, $0 < r < R \leq d(x_0, \partial\Omega)$ and $k \in \R$. Without loss of generality, we take $x_0 = 0$. Let $r \le \rho < \tau \le R$ and let $\eta \in C_c^{\infty}(\R^d)$ be a cutoff function with $0 \le \eta \le 1$, $\supp(\eta) = B_{\frac{\tau+\rho}{2}}$, $\eta \equiv 1$ in $B_{\rho}$, and $\Vert \nabla \eta \Vert_{\infty} \le \frac{4}{\tau-\rho}$. Let $w_\pm(x)$ and $A_k^\pm$ be as in \Cref{def:DG}. We define $A_{k,R}^+ = \{ x \in B_R : u(x) > k \}$ and $A_{k,R}^- = \{ x \in B_R : u(x) < k \}$.

We set $v := u - \eta^qw_+$, then $u \equiv v$ in $\Rd \setminus B_{\tau}$ and $u \geq v$ a.e. in $\Rd$. Since $u$ is a local subminimizer of \eqref{eq:nonlocalfunctional}, it holds that $\mathcal{I}_f(u) \le \mathcal{I}_f(v)$, i.e.,
\begin{equation} \label{eq:min}
0 \le (1-s) \int_{B_{\tau}} \int_{\Rd} A(x,y) \frac{k(x,y)}{|x-y|^{d}} \dy \dx,
\end{equation}
where
\begin{equation*}
A(x,y) = f\left(\frac{\vert v(x) - v(y)\vert}{\vert x-y \vert^s}\right) -  f\left(\frac{\vert u(x) - u(y)\vert}{\vert x-y \vert^s}\right).
\end{equation*}
To estimate $A(x,y)$, we distinguish four different cases and prove the following.

If $x \in A_k^-$ or $y \in A_k^-$, then
\begin{equation}\label{eq:Cacc1}
 A(x,y)\le 0.
\end{equation}
Furthermore, if $x,y\in A_{k,\rho}^+$, then
\begin{equation}\label{eq:Cacc2}
A(x,y) = - f\left(\frac{\vert w_+(x) - w_+(y)\vert}{\vert x-y \vert^s}\right). 
\end{equation}
If $x \in A_{k,\rho}^+$ and $y  \in A_{k}^-$, then
\begin{equation}
\label{eq:Cacc3}
A(x,y) \le -\frac{1}{2}\left[f'\left( \frac{w_-(y)}{\vert x-y \vert^s} \right) \frac{w_+(x)}{\vert x-y \vert^s} + f\left(\frac{\vert w_+(x) - w_+(y)\vert}{\vert x-y \vert^s}\right)\right].
\end{equation}
Finally, if $x,y \in A_k^+$, then we have
\begin{equation} \label{eq:Cacc4}
A(x,y) \le f\left(\frac{\vert w_+(x) - w_+(y)\vert}{\vert x-y \vert^s}\right) + cf\left(\frac{\vert \eta(x) - \eta(y)\vert (w_+(x) \vee w_+(y))}{\vert x-y \vert^s}\right)
\end{equation}
for some $c = c(q)>0$, where $a\vee b := \max\{a,b\}$.
In the following, we prove \eqref{eq:Cacc1}--\eqref{eq:Cacc4}. 
The proof of \eqref{eq:Cacc1} is a direct consequence of monotonicity of $f$. Namely, if $x \not\in A_k^+$, then
\begin{equation*}
\vert v(x) - v(y) \vert = \vert (1-\eta^q(y))w_+(y) + w_-(x)\vert \le \vert w_+(y) + w_-(x) \vert = \vert u(x) - u(y) \vert.
\end{equation*}
To see \eqref{eq:Cacc2}, observe that for $x,y \in A_{k,\rho}^+$ it holds that $\eta(x) = \eta(y) = 1$, and therefore
\begin{equation*}
\vert v(x) - v(y) \vert = \vert u(x) - u(y) - w_+(x) + w_+(y) \vert = 0.
\end{equation*}
Let us prove \eqref{eq:Cacc3}. For $x \in A_{k,\rho}^+$ and $y \not\in A_k^+$ it holds that
\begin{align*}
\vert v(x) - v(y) \vert &= \vert (1-\eta^q(x))w_+(x) + w_-(y) \vert = w_-(y), \\
\vert u(x) - u(y) \vert &= w_+(x) + w_-(y).
\end{align*}
By application of \Cref{lemma:convexlemma} with $\theta = \frac{1}{2}$, $a = \frac{w_-(y)}{\vert x-y \vert^s}$ and $b = \frac{w_+(x)}{\vert x-y \vert^s}$, we obtain
\begin{equation*}
A(x,y) \le -\frac{1}{2}\left[f'\left( \frac{w_-(y)}{\vert x-y \vert^s} \right) \frac{w_+(x)}{\vert x-y \vert^s} + f\left(\frac{w_+(x)}{\vert x-y \vert^s}\right)\right],
\end{equation*}
which implies \eqref{eq:Cacc3} since $w_+(y) = 0$.

To prove \eqref{eq:Cacc4} let us take $x,y \in A_{k}^+$. We compute
\begin{align*}
\vert v(x) - v(y) \vert &= \vert (1-\eta^q(x))w_+(x) -(1-\eta^q(y)) w_+(y) \vert\\
&= \vert (1-\eta^q(x))(w_+(x)-w_+(y)) +(\eta^q(y)-\eta^q(x)) w_+(y) \vert.
\end{align*}
Let us assume without loss of generality that $\eta(x) \ge \eta(y)$. Then, we have $\vert \eta^q(y)-\eta^q(x) \vert \le q \eta^{q-1}(x) \vert \eta(y)-\eta(x) \vert$. We estimate, using monotonicity and convexity of $f$, as well as \eqref{eq:pq-upper}
\begin{align*}
f\left(\frac{\vert v(x) - v(y)\vert}{\vert x-y \vert^s}\right) &\le f\left((1-\eta^q(x))\frac{\vert w_+(x)-w_+(y)\vert}{\vert x-y \vert^s} + \eta^{q}(x)\frac{q w_+(y) \vert \eta(y)-\eta(x)\vert}{ \eta(x)\vert x-y \vert^s}\right) \\
&\le (1-\eta^q(x))f\left(\frac{\vert w_+(x)-w_+(y)\vert}{\vert x-y \vert^s}\right) + \eta^{q}(x)f \left(\frac{q w_+(y) \vert \eta(y)-\eta(x)\vert}{\eta(x) \vert x-y \vert^s}\right)\\
&\le f\left(\frac{\vert w_+(x)-w_+(y)\vert}{\vert x-y \vert^s}\right) + q^q f \left(\frac{\vert \eta(y)-\eta(x)\vert}{\vert x-y \vert^s}w_+(y) \right),
\end{align*}
which implies \eqref{eq:Cacc4}.

By putting together the information from \eqref{eq:Cacc1}--\eqref{eq:Cacc3} and using assumptions on $k$, we deduce
\begin{equation} \label{eq:ball-rho}
\begin{split}
&(1-s) \int_{B_{\rho}} \int_{B_{\rho}} A(x, y) \frac{k(x,y)}{|x-y|^{d}} \dy \dx \\
&\le - \frac{1}{2\Lambda} \Phi_{W^{s, f}(B_\rho)}(w_+) - \frac{1}{2\Lambda}(1-s) \int_{B_{\rho}}\int_{A_{\rho, k}^-} f'\left( \frac{w_-(y)}{\vert x-y \vert^s} \right) \frac{w_+(x)}{\vert x-y \vert^s} \frac{\dy \dx}{\vert x-y \vert^{d}} .
\end{split}
\end{equation}
Moreover, from \eqref{eq:Cacc1}, \eqref{eq:Cacc3} and \eqref{eq:Cacc4} we obtain
\begin{equation} \label{eq:ball-tau}
\begin{split}
&(1-s) \iint_{B_{\tau}^2 \setminus B_{\rho}^2} A(x,y) \frac{k(x,y)}{|x-y|^d} \dy \dx \\
&\le \Lambda (1-s) \iint_{B_{\tau}^2 \setminus B_{\rho}^2} f\left(\frac{\vert w_+(x) - w_+(y)\vert}{\vert x-y \vert^s}\right) \frac{\dy \dx}{\vert x-y \vert^{d}}  \\
&\quad +c\Lambda(1-s) \iint_{B_{\tau}^2 \setminus B_{\rho}^2} f\left(\frac{\vert \eta(x) - \eta(y)\vert (w_+(y) \vee w_+(x))}{\vert x-y \vert^s}\right) \frac{\dy \dx}{\vert x-y \vert^{d}}  \\
&\quad - \frac{1}{2\Lambda} (1-s) \int_{B_{\rho}}\int_{(B_{\tau} \setminus B_{\rho}) \cap A_k^-} f'\left( \frac{w_-(y)}{\vert x-y \vert^s} \right) \frac{w_+(x)}{\vert x-y \vert^s} \frac{\dy \dx}{\vert x-y \vert^{d}} .
\end{split}
\end{equation}
Note that by monotonicity of $f$ and \Cref{lemma:upper}, we have
\begin{align*}
&(1-s)\int_{B_R} \int_{B_R}f\left(\frac{\vert \eta(x) - \eta(y)\vert}{\vert x-y \vert^s}w_+(x)\right) \vert x-y \vert^{-d} \dy \dx\\
&\le (1-s)\int_{B_R} \int_{B_{2R}(x)} f\left(\frac{4\vert x - y\vert^{1-s}}{\tau-\rho}w_+(x)\right) \vert x-y \vert^{-d} \dy \dx\\
&\le (1-s)\left(\frac{4R}{\tau-\rho}\right)^{q} \int_{B_R} \sum_{k = 0}^{\infty}\int_{B_{2^{-k+1}R}(x) \setminus B_{2^{-k}R}(x)} f\left((2^{-k+1})^{1-s}\frac{w_+(x)}{R^s}\right) (2^{-k}R)^{-d} \dy \dx\\
&\le c(1-s)\left(\frac{R}{\tau-\rho}\right)^{q} \int_{B_R} \sum_{k = 0}^{\infty} f\left(2^{-k(1-s)} \frac{w_+(x)}{R^s} \right) \dx
\end{align*}
for some $c = c(d, q) > 0$. We use \Cref{lem:conv} and \Cref{lemma:lower} to obtain
\begin{align*}
(1-s)\sum_{k = 0}^{\infty} f\left(2^{-k(1-s)} \frac{w_+(x)}{R^s} \right) 
&\le (1-s) \sum_{k=0}^{\infty} 2^{-k(1-s)} f\left(\frac{w_+(x)}{R^s}\right) \\
&\le \frac{1-s}{1-2^{-(1-s)}} f\left( \frac{w_+(x)}{R^s} \right).
\end{align*}
Since the map $s \mapsto (1-s)/(1-2^{-(1-s)})$ is bounded on $(0, 1)$ from above, we have
\begin{equation} \label{eq:ball-R}
(1-s)\int_{B_R} \int_{B_R}f\left(\frac{\vert \eta(x) - \eta(y)\vert}{\vert x-y \vert^s}w_+(x)\right) \vert x-y \vert^{-d} \dy \dx \leq c\left(\frac{R}{\tau-\rho}\right)^{q} \Phi_{L^f(B_R)}\left( \frac{w_+}{R^s}\right)
\end{equation}
for some $c = c(d, q) > 0$.

By combination of the estimates \eqref{eq:ball-rho}, \eqref{eq:ball-tau} and \eqref{eq:ball-R}, we get:
\begin{equation} \label{eq:local}
\begin{split}
&(1-s) \int_{B_{\tau}}\int_{B_{\tau}} A(x, y) \frac{k(x,y)}{|x-y|^{d}} \dy \dx \\
&\le c (1-s) \iint_{B_{\tau}^2 \setminus B_{\rho}^2} f\left(\frac{\vert w_+(x) - w_+(y)\vert}{\vert x-y \vert^s}\right) \frac{\d y \dx}{|x-y|^d} + c \left(\frac{R}{\tau-\rho} \right)^q \Phi_{L^f(B_R)}\left( \frac{w_+}{R^s}\right) \\
&\quad - c \Phi_{W^{s, f}(B_\rho)}(w_+) - c(1-s) \int_{B_{\rho}}\int_{A_{\tau, k}^-} f'\left( \frac{w_-(y)}{\vert x-y \vert^s} \right) \frac{w_+(x)}{\vert x-y \vert^s} \frac{\dy \dx}{\vert x-y \vert^{d}} .
\end{split}
\end{equation}

Let us deduce two more estimates for $A(x,y)$:
\begin{align}
\label{eq:Cacc5}
A(x,y) &= 0 \quad \text{for } x,y \not\in A_{k,\frac{\tau+\rho}{2}}^+,\\
\label{eq:Cacc6}
A(x,y) &\le f'\left( \frac{w_+(y)}{\vert x-y \vert^s} \right) \frac{w_+(x)}{\vert x-y \vert^s} \quad\text{for } x \in A_k^+ \text{ and } y \in A_k^+ \setminus B_{\tau}.
\end{align}
The proof of \eqref{eq:Cacc5} is trivial since $\supp(\eta) = B_{\frac{\tau+\rho}{2}}$ and $w_+=0$ on $A_k^-$. To see \eqref{eq:Cacc6}, we compute
\begin{align*}
A(x, y) = f\left(\frac{\vert (1-\eta^q(x))w_+(x) - w_+(y)\vert}{\vert x-y \vert^s}\right) -  f\left(\frac{\vert w_+(x) - w_+(y)\vert}{\vert x-y \vert^s}\right)
\end{align*}
and apply \Cref{lemma:CaccHelpLemma} with $\mu = 1-\eta^q(x)$, $a = \frac{w_+(x)}{\vert x-y \vert^s}$ and $b=\frac{w_+(y)}{\vert x-y \vert^s}$.
Consequently, by \eqref{eq:Cacc1}, \eqref{eq:Cacc3}, \eqref{eq:Cacc5} and \eqref{eq:Cacc6}, it holds that
\begin{equation} \label{eq:mixed}
\begin{split}
&(1-s) \int_{B_{\tau}} \int_{\Rd \setminus B_{\tau}} A(x, y) \frac{k(x,y)}{|x-y|^{d}} \dy \dx \\
&\le \Lambda (1-s) \int_{B_{\frac{\tau+\rho}{2}}} \int_{\Rd \setminus B_{\tau}}  f'\left( \frac{w_+(y)}{\vert x-y \vert^s} \right) \frac{w_+(x)}{\vert x-y \vert^s} \frac{\dy \dx}{\vert x-y \vert^{d}} \\
&\quad -\frac{1}{2\Lambda}(1-s)\int_{B_{\rho}}\int_{(\Rd \setminus B_{\tau}) \cap A_{k}^{-}} f'\left( \frac{w_-(y)}{\vert x-y \vert^s} \right) \frac{w_+(x)}{\vert x-y \vert^s} \frac{\dy \dx}{\vert x-y \vert^{d}} .
\end{split}
\end{equation}
Moreover, using \eqref{eq:derivativedoubling1} with $p=1$ we observe that
\begin{equation} \label{eq:mixed-tail}
\begin{split}
&(1-s) \int_{B_{\frac{\tau+\rho}{2}}} \int_{\Rd \setminus B_{\tau}}  f'\left( \frac{w_+(y)}{\vert x-y \vert^s} \right) \frac{w_+(x)}{\vert x-y \vert^s} \frac{\dy \dx}{\vert x-y \vert^{d}} \\
&\leq q(1-s) \left( \frac{2R}{\tau-\rho} \right)^{d+sq} \int_{B_{\frac{\tau+\rho}{2}}} \int_{\Rd \setminus B_{\tau}} f'\left( \frac{w_+(y)}{\vert y \vert^s} \right) \frac{w_+(x)}{|y|^{s}} |y|^{-d} \dy \dx \\
&\le c(1-s) \left(\frac{R}{\tau - \rho} \right)^{d+sq} \Vert w_+ \Vert_{L^1(B_{R})} \int_{\Rd \setminus B_{r}} f' \left( \frac{w_+(y)}{\vert y \vert^s} \right) \vert y \vert^{-d-s} \dy.
\end{split}
\end{equation}

By combining estimates \eqref{eq:min}, \eqref{eq:local}, \eqref{eq:mixed}, and \eqref{eq:mixed-tail}, we derive
\begin{align*}
&\Phi_{W^{s, f}(B_\rho)}(w_+) + (1-s) \int_{B_{\rho}}\int_{A_{k}^{-}} f'\left( \frac{w_-(y)}{\vert x-y \vert^s} \right) \frac{w_+(x)}{\vert x-y \vert^s} \frac{\dy \dx}{\vert x-y \vert^{d}} \\
&\le c \left(\Phi_{W^{s,f}(B_\tau)}(w_+) - \Phi_{W^{s,f}(B_\rho)}(w_+) \right) + c \left( \frac{R}{\tau-\rho} \right)^q \Phi_{L^f(B_R)} \left( \frac{w_+}{R^s} \right) \\
&\quad + c(1-s) \left(\frac{R}{\tau - \rho} \right)^{d+sq} \Vert w_+ \Vert_{L^1(B_{R})} \int_{\Rd \setminus B_{r}} f' \left( \frac{w_+(y)}{\vert y \vert^s} \right) \vert y \vert^{-d-s} \dy
\end{align*}
for some $c = c(d, q, \Lambda) > 0$. By setting
\begin{align*}
\phi(\rho) = \Phi_{W^{s, f}(B_\rho)}(w_+) + (1-s) \int_{B_{\rho}}\int_{A_{k}^{-}} f'\left( \frac{w_-(y)}{\vert x-y \vert^s} \right) \frac{w_+(x)}{\vert x-y \vert^s} \frac{\dy \dx}{\vert x-y \vert^{d}} ,
\end{align*}
we can deduce from the above line that 
\begin{align*}
\phi(\rho) 
&\le c(\phi(\tau)-\phi(\rho)) + c \left( \frac{R}{\tau-\rho} \right)^q \Phi_{L^f(B_R)} \left( \frac{w_+}{R^s} \right) \\
&\quad + c(1-s) \left(\frac{R}{\tau - \rho} \right)^{d+sq} \Vert w_+ \Vert_{L^1(B_{R})} \int_{\Rd \setminus B_{r}} f' \left( \frac{w_+(y)}{\vert y \vert^s} \right) \vert y \vert^{-d-s} \dy.
\end{align*}
We ``fill the hole" by adding $c\phi(\rho)$ to both sides. After dividing by $1+c$, we get that
\begin{equation} \label{eq:hole-filling-minimizer}
\begin{split}
\phi(\rho)
&\le \gamma \phi(\tau) + c \left( \frac{R}{\tau-\rho} \right)^q \Phi_{L^f(B_R)} \left( \frac{w_+}{R^s} \right) \\
&\quad + c(1-s) \left(\frac{R}{\tau - \rho} \right)^{d+sq} \Vert w_+ \Vert_{L^1(B_{R})} \int_{\Rd \setminus B_{r}} f' \left( \frac{w_+(y)}{\vert y \vert^s} \right) \vert y \vert^{-d-s} \dy,
\end{split}
\end{equation}
where $\gamma \in (0,1)$ and $c=c(d, q, \Lambda) > 0$. The desired result follows now from a standard iteration argument, see Lemma 4.11 in \cite{Coz17}.
\end{proof}

\begin{remark}
Similar to the proof of \Cref{thm:DG-minimizer}, it is possible to show that local superminimizer (minimizers, respectively) $u \in V^{s,f}(\Omega|\Rd)$ satisfies $u \in G_-(\Omega;q,c,s,f)$ ($u \in G(\Omega;q,c,s,f)$, respectively) for some $c = c(d, q, \Lambda) > 0$.
\end{remark}

\begin{proof} [Proof of \Cref{thm:minimizer}]
By \Cref{thm:DG-minimizer}, it follows that $u \in G(\Omega;q,c_1,s,f)$ for some $c_1 = c_1(d, q, \Lambda)> 0$. According to \Cref{prop:DG-F}, it holds that $u \in G(\Omega;q/p,c_2,s,g)$ for some $c_2 = c_2(d, p, q, \Lambda) > 0$. From  \Cref{thm:Fholder} and \Cref{thm:locB} we deduce the desired result.
\end{proof}

%%%%%%%%%%%%%%%%%%%%%%%%%%%%%%%%%%%%%%
\section{Application to weak solutions} \label{sec:weaksoln}
%%%%%%%%%%%%%%%%%%%%%%%%%%%%%%%%%%%%%%

In this section we aim to study weak solutions to nonlocal equations \eqref{eq:nonlocalequation} and prove \Cref{thm:weaksol}. Throughout this section we assume that $f$ is a convex increasing function satisfying \eqref{eq:pq-upper} and $h$ is a measurable function satisfying the structure condition \eqref{eq:h}, i.e.,
\begin{align}\tag{$h$}
h(x, y, t) = h(y, x, t), \quad \sign(t)\frac{1}{\Lambda} f'(|t|) \leq h(x,y,t) \leq \Lambda f'(|t|)
\end{align}
for a.e. $x, y \in \Rd$ and for all $t \in \R$.
We define weak solutions to \eqref{eq:nonlocalequation} as follows:
\begin{definition} [weak solution]
We say that $u \in V^{s,f}(\Omega | \Rd)$ is a {\it weak subsolution} to \eqref{eq:nonlocalequation} if for every $\phi \in V^{s,f}(\Omega|\Rd)$ with $\phi = 0$ a.e. in $\Rd \setminus \Omega$ and $\phi \ge 0$ a.e. in $\Omega$, it holds that
\begin{equation} \label{eq:weak-formulation}
(1-s) \iint_{(\Omega^c\times \Omega^c)^c} h \left( x, y, \frac{u(x) - u(y)}{\vert x-y \vert^s}\right) \frac{\phi(x)-\phi(y)}{\vert x-y \vert^{d+s}} \dy \dx \le 0.
\end{equation}
We say that $u \in V^{s,f}(\Omega | \Rd)$ is a {\it weak supersolution} if $-u$ is a weak subsolution. A function $u \in V^{s,f}(\Omega | \Rd)$ is called a {\it weak solution} if it is a weak subsolution and a weak supersolution.
\end{definition}

Recall that we always assume $f(0) = 0$ and $f(1)=1$. This assumption can be made without loss of generality since $u$ solves $\mathcal{L}_h u = 0$ if and only if $u$ solves $\mathcal{L}_{h/f(1)} u = 0$ and one can always choose $f(0) = 0$.

\begin{remark}
Let us prove that the weak formulation \eqref{eq:weak-formulation} is well-defined. Let $u,\phi \in V^{s,f}(\Omega | \Rd)$. Then, by \eqref{eq:h} and Fenchel's inequality \eqref{eq:Fenchel}, we have
\begin{align*}
&(1-s) \iint_{(\Omega^c\times \Omega^c)^c} \left\vert h \left( x, y, \frac{u(x) - u(y)}{\vert x-y \vert^s}\right) \frac{\phi(x)-\phi(y)}{\vert x-y \vert^{d+s}} \right \vert \dy \dx \\ 
&\leq \Lambda (1-s) \iint_{(\Omega^c\times \Omega^c)^c} f' \left( \frac{\vert u(x) - u(y) \vert}{\vert x-y \vert^s}\right) \frac{|\phi(x)-\phi(y)|}{\vert x-y \vert^{s}} \frac{\dy \dx}{\vert x-y \vert^{d}}  \\
&\le \Lambda(1-s) \iint_{(\Omega^c\times \Omega^c)^c} \left[ f^{\ast}\left(f' \left( \frac{\vert u(x) - u(y) \vert}{\vert x-y \vert^s}\right)\right) + f\left( \frac{\vert \phi(x) - \phi(y) \vert}{\vert x-y \vert^s}\right) \right] \frac{\dy \dx}{\vert x-y \vert^{d}} \\
&\le \Lambda (q-1) \Phi_{V^{s,f}(\Omega|\Rd)}(u) + \Lambda \Phi_{V^{s,f}(\Omega|\Rd)}(\phi) < \infty,
\end{align*}  
where we used that by \eqref{eq:legendre} and \eqref{eq:pq-upper}: $f^{\ast}(f'(t)) \le (q-1) f(t)$.
\end{remark}

The following theorem yields that weak solutions to \eqref{eq:nonlocalequation} belong to the De Giorgi classes introduced in \Cref{sec:degiorgiclasses}. 

\begin{theorem}
\label{thm:CaccSol}
Let $s \in (0,1)$, $q > 1$ and $\Lambda \geq 1$. Let $f: [0, \infty) \to [0, \infty)$ be a convex increasing function satisfying \eqref{eq:pq-upper} and let $h: \Rd \times \Rd \times \R \to \R$ be a measurable function satisfying \eqref{eq:h}. Let $u \in V^{s,f}(\Omega|\Rd)$ be a weak subsolution to \eqref{eq:nonlocalequation}. Then, $u \in G_+(\Omega;q,c,s,f)$ for some $c > 0$ depending on $d$, $q$ and $\Lambda$.
\end{theorem}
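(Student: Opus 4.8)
The plan is to derive the Caccioppoli-type inequality \eqref{eq:Caccestimate} directly from the weak formulation \eqref{eq:weak-formulation}, following the same scheme as in \Cref{thm:DG-minimizer} but working with the bilinear expression $h(x,y,\cdot)(\phi(x)-\phi(y))$ rather than the energy difference $A(x,y)$. Fix $x_0 \in \Omega$, radii $r \le \rho < \tau \le R \le d(x_0,\partial\Omega)$, a level $k \in \R$, and set $w_\pm = (u-k)_\pm$. As test function I would take $\phi = \eta^q w_+$, where $\eta \in C_c^\infty(B_{(\tau+\rho)/2})$ is the same cutoff as in \Cref{thm:DG-minimizer}, with $\eta \equiv 1$ on $B_\rho$ and $\|\nabla\eta\|_\infty \le 4/(\tau-\rho)$; this $\phi$ is admissible (nonnegative, compactly supported in $\Omega$, and in $V^{s,f}(\Omega|\Rd)$). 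Plugging this into \eqref{eq:weak-formulation} gives
\begin{equation*}
(1-s) \iint_{(\Omega^c\times\Omega^c)^c} h\left(x,y,\frac{u(x)-u(y)}{|x-y|^s}\right) \frac{\eta^q(x)w_+(x) - \eta^q(y)w_+(y)}{|x-y|^{d+s}} \dy\dx \le 0,
\end{equation*}
and the task is to extract from this the left-hand side of \eqref{eq:Caccestimate} (controlled from below) minus the tail and $L^f$ terms on the right.

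The core of the argument is a pointwise analysis of the integrand analogous to \eqref{eq:Cacc1}--\eqref{eq:Cacc6}, split according to whether $x,y$ lie in $A_k^+$, $A_k^-$, and whether they are in $B_\rho$, $B_{(\tau+\rho)/2}$, $B_\tau$, or outside. The sign condition in \eqref{eq:h}, namely $\sign(t)\Lambda^{-1}f'(|t|) \le h(x,y,t) \le \Lambda f'(|t|)$, plays the role that convexity of $f$ played before: on the ``good'' region $x,y \in A_{k,\rho}^+$ one uses that $u(x)-u(y) = w_+(x)-w_+(y)$ and that the test increment equals $w_+(x)-w_+(y)$, so the integrand is $\ge \Lambda^{-1} f'(|w_+(x)-w_+(y)|/|x-y|^s)|w_+(x)-w_+(y)|/|x-y|^s$; then, since $f$ is convex with $f(0)=0$, \Cref{lemma:convexlemma} (with $a=0$, $\theta=1$) gives $f'(t)t \ge f(t)$, producing $\Phi_{W^{s,f}(B_\rho)}(w_+)$. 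On the cross region $x \in A_{k,\rho}^+$, $y \in A_k^-$ one uses monotonicity of $f'$ together with $u(x)-u(y) = w_+(x)+w_-(y)$ and test increment $w_+(x)$ to bound the integrand below by a constant times $f'(w_-(y)/|x-y|^s)w_+(x)/|x-y|^s$, giving the second term on the left of \eqref{eq:Caccestimate}. For the ``bad'' terms — $x,y$ both in $A_k^+$ but at least one outside $B_\rho$, where the cutoff varies — I would mimic \eqref{eq:Cacc4} and \eqref{eq:Cacc6}: using $h(x,y,t) \le \Lambda f'(|t|)$, the elementary inequality $|\eta^q(x)-\eta^q(y)| \le q(\eta(x)\vee\eta(y))^{q-1}|\eta(x)-\eta(y)|$, convexity of $f$, and $tf'(t) \le qf(t)$ from \eqref{eq:pq-upper}, one arrives at terms of the form $f(|\eta(x)-\eta(y)|(w_+(x)\vee w_+(y))/|x-y|^s)$ plus a good term $f(|w_+(x)-w_+(y)|/|x-y|^s)$ that can later be absorbed; the annulus sum from \eqref{eq:ball-R} then converts the cutoff term into $(R/(\tau-\rho))^q \Phi_{L^f(B_R)}(w_+/R^s)$. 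For the far region $x \in B_{(\tau+\rho)/2} \cap A_k^+$, $y \notin B_\tau$, one uses \eqref{eq:derivativedoubling1} with $p=1$ to pull out $(R/(\tau-\rho))^{d+sq}$ and replace $|x-y|$ by $|y|$, yielding the tail term $\|w_+\|_{L^1(B_R)} \int_{\Rd\setminus B_r} f'(w_+(y)/|y|^s)|y|^{-d-s}\dy$ as in \eqref{eq:mixed-tail}.

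Assembling these estimates, one obtains an inequality of the shape $\phi(\rho) \le c(\phi(\tau)-\phi(\rho)) + c(R/(\tau-\rho))^q \Phi_{L^f(B_R)}(w_+/R^s) + c(1-s)(R/(\tau-\rho))^{d+sq}\|w_+\|_{L^1(B_R)}\mathrm{(tail)}$ with $\phi$ defined as in \Cref{thm:DG-minimizer}, and then the hole-filling trick (add $c\phi(\rho)$, divide by $1+c$) plus the iteration lemma \cite[Lemma 4.11]{Coz17} yields \eqref{eq:Caccestimate} with $c = c(d,q,\Lambda)$. The main obstacle I anticipate is the pointwise case analysis in the region where both points are positive but the cutoff is nontrivial: unlike in \Cref{thm:DG-minimizer}, here the integrand is linear in the test increment $\eta^q(x)w_+(x)-\eta^q(y)w_+(y)$ rather than coming from an energy, so the algebraic identity $v(x)-v(y) = (1-\eta^q(x))(w_+(x)-w_+(y)) + (\eta^q(y)-\eta^q(x))w_+(y)$ has to be exploited at the level of $h$ via the bound $|h(x,y,t)| \le \Lambda f'(|t|)$ and monotonicity of $f'$, and one must be careful that the ``good'' term $f(|w_+(x)-w_+(y)|/|x-y|^s)$ generated in the bad region over $B_\tau^2\setminus B_\rho^2$ is genuinely absorbable after hole-filling — exactly as in \eqref{eq:local}. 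A secondary technical point is verifying admissibility of $\phi = \eta^q w_+$ in $V^{s,f}(\Omega|\Rd)$, which follows from $u \in V^{s,f}(\Omega|\Rd)$, boundedness of $\eta$, and \eqref{eq:pq-upper}, together with the fact that the weak formulation integrand is absolutely integrable (as shown in the preceding remark).
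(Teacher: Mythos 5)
Your proposal is correct and follows essentially the same route as the paper's proof of \Cref{thm:CaccSol}: test with $\phi = \eta^q w_+$, carry out the pointwise case analysis of $h(x,y,\cdot)\bigl(\phi(x)-\phi(y)\bigr)$ by splitting over $A_k^{\pm}$ exactly as in \Cref{thm:DG-minimizer}, use the structure condition \eqref{eq:h} together with \Cref{lem:conv}, \eqref{eq:der-subadd} and a Fenchel-type inequality on the ``both-positive'' region, bound the cutoff contribution via \eqref{eq:ball-R}, and extract the tail via \eqref{eq:mixed-tail}. The only cosmetic difference is the final absorption step: you propose keeping the ``bad'' term $f\bigl(|w_+(x)-w_+(y)|/|x-y|^s\bigr)$ over the annulus and hole-filling, whereas the paper builds a free parameter $\varepsilon$ into the Fenchel estimate (see \eqref{eq:eta1}--\eqref{eq:DG3}) and then takes $\varepsilon = (4\Lambda^2(q-1))^{-1}\wedge 2^{-1}$ to reach $\gamma=1/2$ directly --- both variants feed into the same iteration lemma and give a constant depending only on $d$, $q$, $\Lambda$.
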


\begin{proof}
The desired result follows from a similar argument as in the proof of \Cref{thm:DG-minimizer}. Let $x_0 \in \Omega$, $0 < r < R \leq d(x_0, \partial\Omega)$, and $k \in \R$. We may assume without loss of generality that $x_0 = 0$. Let $r \leq \rho < \tau \leq R$ and let $\eta \in C_c^{\infty}(\Rd)$ be a cutoff function with $0 \le \eta \le 1$, $\supp(\eta) = B_{\frac{\tau+\rho}{2}}$, $\eta \equiv 1$ in $B_{\rho}$, and $\Vert \nabla \eta \Vert_{\infty} \leq \frac{4}{\tau-\rho}$. We define $w_\pm(x)$, $A_k^{\pm}$ and $A_{k, R}^\pm$ as in \Cref{thm:DG-minimizer}. We set $v = \eta^q w_+$. Since $u$ is a weak subsolution to \eqref{eq:nonlocalequation}, we have
\begin{equation} \label{eq:subsoln}
0 \geq (1-s) \iint_{(\Omega^c\times \Omega^c)^c} B(x, y) \frac{\dy \dx}{|x-y|^{d}} ,
\end{equation}
where
\begin{equation*}
B(x,y) = h\left( x, y, \frac{u(x)-u(y)}{|x-y|^s} \right) \frac{v(x)-v(y)}{|x-y|^s}.
\end{equation*}

Let us estimate $B(x, y)$. If $x, y \in A_k^-$, then
\begin{equation} \label{eq:DG1}
B(x,y) = 0.
\end{equation}
If $x \in A_{k}^+$ and $y \in A_{k}^-$, then by \eqref{eq:h}
\begin{equation*}
h\left( x, y, \frac{u(x)-u(y)}{|x-y|^{s}} \right) = h\left( x, y, \frac{w_+(x)+w_-(y)}{|x-y|^{s}} \right) \geq \frac{1}{\Lambda} f' \left( \frac{w_+(x)+w_-(y)}{|x-y|^{s}} \right).
\end{equation*}
Thus, we obtain
\begin{equation} \label{eq:DG2}
\begin{split}
B(x,y)
&\geq \frac{1}{\Lambda} f'\left( \frac{|w_+(x)+w_-(y)|}{|x-y|^s} \right) \frac{w_+(x)}{|x-y|^s} \eta^q(x) \\
&\geq \frac{1}{2\Lambda} \left[ f'\left( \frac{w_+(x)}{|x-y|^s} \right) + f'\left( \frac{w_-(y)}{|x-y|^s} \right) \right] \frac{w_+(x)}{|x-y|^s} \eta^q(x) \\
&\geq \frac{1}{2\Lambda} f\left( \frac{|w_+(x)-w_+(y)|}{|x-y|^s} \right) \eta^q(x) + \frac{1}{2\Lambda} f'\left( \frac{w_-(y)}{|x-y|^s} \right) \frac{w_+(x)}{|x-y|^s} \eta^q(x),
\end{split}
\end{equation}
where we used \eqref{eq:der-subadd} and \Cref{lem:conv}.

If $x, y \in A_{k}^+$, we prove
\begin{equation} \label{eq:DG3}
\begin{split}
B(x,y) 
&\geq \frac{1}{\Lambda} f\left( \frac{|w_+(x)-w_+(y)|}{|x-y|^s} \right)(\eta^q(x) \lor \eta^q(y)) - \varepsilon \Lambda(q-1) f\left( \frac{|w_+(x)-w_+(y)|}{|x-y|^s} \right) \\
&\quad - c\Lambda f\left( \frac{w_+(x) \lor w_+(y)}{|x-y|^s} |\eta(x)-\eta(y)| \right)
\end{split}
\end{equation}
for any $\varepsilon \in (0,1)$, where $c = c(q, \varepsilon) > 0$. It is enough to prove \eqref{eq:DG3} for the case $w_+(x) \geq w_+(y)$. If $\eta(x) \geq \eta(y)$, then \eqref{eq:DG3} follows from
\begin{equation*}
B(x,y) \geq h\left( x, y, \frac{w_+(x)-w_+(y)}{|x-y|^s} \right) \frac{w_+(x)-w_+(y)}{|x-y|^s} \eta^q(x) \geq \frac{1}{\Lambda} f\left( \frac{w_+(x)-w_+(y)}{|x-y|^s} \right) \eta^q(x),
\end{equation*}
where we used \eqref{eq:h} and \Cref{lem:conv}. When $\eta(y) \geq \eta(x)$, we observe that
\begin{equation} \label{eq:eta1}
\begin{split}
B(x, y)
&= h\left( x, y, \frac{w_+(x)-w_+(y)}{|x-y|^s} \right) \left( \frac{w_+(x)-w_+(y)}{|x-y|^s} \eta^q(y) - \frac{w_+(x)}{|x-y|^s} (\eta^q(y)-\eta^q(x)) \right) \\
&\geq \frac{1}{\Lambda} f \left( \frac{w_+(x)-w_+(y)}{|x-y|^s} \right) \eta^q(y) - \Lambda f' \left( \frac{w_+(x)-w_+(y)}{|x-y|^s} \right) \frac{w_+(x)}{|x-y|^s} (\eta^q(y)-\eta^q(x)),
\end{split}
\end{equation}
where we used \Cref{lem:conv} again. Note that
\begin{equation*}
\eta^q(y) - \eta^q(x) \leq q\eta^{q-1}(y) (\eta(y)-\eta(x)) \leq q (\eta(y)-\eta(x)).
\end{equation*}
Thus, for $\varepsilon \in (0,1)$ we use \Cref{lemma:upper} and the Fenchel's inequality \eqref{eq:Fenchel} to obtain
\begin{align*}
&f' \left( \frac{w_+(x)-w_+(y)}{|x-y|^s} \right) \frac{w_+(x)}{|x-y|^s} (\eta^q(y)-\eta^q(x)) \\
&\leq q \varepsilon^{-q} f' \left( \varepsilon \frac{w_+(x)-w_+(y)}{|x-y|^s} \right) \frac{w_+(x)}{|x-y|^s} (\eta(y)-\eta(x)) \\
&\leq f^{\ast}\left( f'\left(\varepsilon \frac{w_+(x)-w_+(y)}{|x-y|^s} \right) \right) + f\left(q\varepsilon^{-q}  \frac{\eta(y)-\eta(x)}{|x-y|^s} w_+(x) \right).
\end{align*}
By \eqref{eq:legendre}, \eqref{eq:pq-upper} and \Cref{lemma:lower} (iv) with $p=1$, we deduce that
\begin{equation} \label{eq:eta2}
\begin{split}
&f' \left( \frac{w_+(x)-w_+(y)}{|x-y|^s} \right) \frac{w_+(x)}{|x-y|^s} (\eta^q(y)-\eta^q(x)) \\
&\leq \varepsilon(q-1) f\left( \frac{w_+(x)-w_+(y)}{|x-y|^s} \right) + c f\left( \frac{\eta(y)-\eta(x)}{|x-y|^s} w_+(x) \right)
\end{split}
\end{equation}
for some $c=c(q, \varepsilon) > 0$. Therefore, \eqref{eq:DG3} follows from \eqref{eq:eta1} and \eqref{eq:eta2}.

Combining \eqref{eq:DG1}, \eqref{eq:DG2} and \eqref{eq:DG3}, we have
\begin{equation} \label{eq:DG-rho}
\begin{split}
&(1-s) \int_{B_{\tau}} \int_{B_{\tau}} B(x,y) \frac{\dy \dx}{|x-y|^{d}}  \\
&\geq \frac{1}{2\Lambda} \Phi_{W^{s, f}(B_{\rho})}(w_+) + \frac{1}{2\Lambda}(1-s) \int_{B_{\rho}} \int_{A_{k, \tau}^-} f'\left( \frac{w_-(y)}{|x-y|^s} \right) \frac{w_+(x)}{|x-y|^s} \frac{\dy \dx}{|x-y|^{d}}  \\
&\quad - \varepsilon \Lambda(q-1) \Phi_{W^{s, f}(B_{\tau})}(w_+) - c \Lambda(1-s) \int_{B_{\tau}} \int_{B_{\tau}} f \left( \frac{|\eta(x)-\eta(y)|}{|x-y|^s} w_+(x) \right) \frac{\d y \dx}{|x-y|^{d}}.
\end{split}
\end{equation}

Let us now take into account the pairs $(x,y) \in (\Rd \times \Rd) \setminus (B_{\tau} \times B_{\tau})$. Using \eqref{eq:h} we compute
\begin{equation} \label{eq:DG-I12}
\begin{split}
&(1-s)\int_{B_{\tau}} \int_{\Rd \setminus B_{\tau}} B(x, y) \frac{\dy \dx}{|x-y|^{d}}  \\
&\ge \frac{1}{\Lambda}(1-s) \int_{A_{k,\rho}^+} \int_{\{ u(x) > u (y)\} \setminus B_{\tau}} f'\left( \frac{\vert u(x) - u(y)\vert}{\vert x-y \vert^s} \right) \frac{w_+(x)}{|x-y|^{s}} \frac{\dy \dx}{\vert x-y \vert^{d}} \\
&\quad - \frac{1}{\Lambda}(1-s) \int_{A_{k,\frac{\tau+\rho}{2}}^+} w_+(x) \int_{\{ u(y) > u (x)\} \setminus B_{\tau}} f'\left( \frac{\vert u(x) - u(y)\vert}{\vert x-y \vert^s} \right) \frac{\dy \dx}{\vert x-y \vert^{d+s}} =: I_1 - I_2.
\end{split}
\end{equation}
By monotonicity of $f'$, we have that 
\begin{equation} \label{eq:DG-I1}
I_1 \ge \frac{1}{2\Lambda}(1-s) \int_{B_{\rho}} \int_{(\Rd \setminus B_{\tau}) \cap A_k^-} f'\left( \frac{ w_-(y)}{\vert x-y \vert^s} \right) \frac{w_+(x)}{|x-y|^{s}} \frac{\dy \dx}{\vert x-y \vert^{d}}.
\end{equation}
Moreover, by \eqref{eq:mixed-tail} we obtain
\begin{equation} \label{eq:DG-I2}
I_2 \le c (1-s) \left( \frac{R}{\tau-\rho} \right)^{d+sq} \Vert w_+ \Vert_{L^1(B_R)} \int_{\Rd \setminus B_{\tau}} f'\left(\frac{w_+(y)}{\vert y \vert^s} \right) \frac{\dy}{\vert y \vert^{d+s}}.
\end{equation}

Therefore, it follows from \eqref{eq:subsoln}, \eqref{eq:DG-rho}--\eqref{eq:DG-I2} and \eqref{eq:ball-R} that
\begin{equation} \label{eq:hole-filling-weaksol}
\begin{split}
&\frac{1}{2\Lambda} \Phi_{W^{s, f}(B_\rho)}(w_+) + \frac{1}{2\Lambda} (1-s) \int_{B_{\rho}}\int_{A_{k}^{-}} f'\left( \frac{w_-(y)}{\vert x-y \vert^s} \right) \frac{w_+(x)}{\vert x-y \vert^s} \frac{\dy \dx}{\vert x-y \vert^{d}}  \\
&\le \varepsilon\Lambda(q-1) \Phi_{W^{s,f}(B_\tau)}(w_+) + c \left( \frac{R}{\tau-\rho} \right)^q \Phi_{L^f(B_R)} \left( \frac{w_+}{R^s} \right) \\
&\quad + c(1-s) \left(\frac{R}{\tau - \rho} \right)^{d+sq} \Vert w_+ \Vert_{L^1(B_{R})} \int_{\Rd \setminus B_{r}} f' \left( \frac{w_+(y)}{\vert y \vert^s} \right) \frac{\dy}{\vert y \vert^{d+s}},
\end{split}
\end{equation}
where $c = c(d, q,\Lambda, \varepsilon) > 0$. We take $\varepsilon = (4\Lambda^2(q-1))^{-1} \wedge 2^{-1} \in (0,1)$ so that \eqref{eq:hole-filling-weaksol} boils down to \eqref{eq:hole-filling-minimizer} with $\gamma=1/2$ and $c = c(d,q,\Lambda) > 0$. This finishes the proof.
\end{proof}

\begin{remark}
Similar to the proof of \Cref{thm:CaccSol}, it is possible to show that weak solutions $u \in V^{s,f}(\Omega|\R^d)$ to $\mathcal{L}_h u = 0$ in $\Omega$ satisfy $u \in G(\Omega;q,c,s,f)$ for some $c > 0$ depending on $d$, $q$ and $\Lambda$.
\end{remark}

\begin{proof} [Proof of \Cref{thm:weaksol}]
By \Cref{thm:CaccSol}, it follows that $u \in G(\Omega;q,c_1,s,f)$ for some $c_1 > 0$ depending on $d$, $q$ and $\Lambda$. According to \Cref{prop:DG-F}, it holds that $u \in G(\Omega;q,c_2,s,g)$ for some $c_2 = c_2(d, p, q, \Lambda) > 0$. From \Cref{thm:Fholder} and \Cref{thm:locB} we deduce the desired result.
\end{proof}

%\bibliography{pqgrowth}
%\bibliographystyle{alpha}

\end{document}